\documentclass[11pt]{article}
\usepackage{amssymb,amsthm}
\usepackage{amsmath,latexsym,epsf,mathabx,underscore}
\usepackage[sort&compress,numbers]{natbib} 
\usepackage{changepage}
\usepackage[all]{xy}
\usepackage{hyperref}
\numberwithin{equation}{section}
\usepackage{color}
\textheight 245mm \textwidth 160mm \oddsidemargin 0pt
\evensidemargin 0pt \topmargin -50pt

\begin{document}

\newcommand{\nc}{\newcommand}
\def\PP#1#2#3{{\mathrm{Pres}}^{#1}_{#2}{#3}\setcounter{equation}{0}}
\def\mr#1{{{\mathrm{#1}}}\setcounter{equation}{0}}
\def\mc#1{{{\mathcal{#1}}}\setcounter{equation}{0}}
\def\mb#1{{{\mathbb{#1}}}\setcounter{equation}{0}}
\def\Mcc{\mc{C}}
\def\Mbe{\mb{E}}
\def\Mcp{\mc{P}}
\def\Mcg{\mc{G}}
\def\fbzh{\mc{C}(-,\mc{P}(\xi))\text{-exact}}
\def\extri{(\mc{C},\mb{E},\mathfrak{s})}
\def\Gproj{\xi\text{-}\mc{G}\text{projective}}
\def\Ginj{\xi\text{-}\mc{G}\text{injective}}
\def\GP{\mc{G}\mc{P}(\xi)}
\def\GI{\mc{G}\mc{I}(\xi)}
\def\P{\mc{P}(\xi)}
\def\Gpd{\xi\text{-}\mc{G}\text{pd}}
\def\Extri{\mb{E}\text{-triangle}}
\def\SGP{\mc{SGP}(\xi)}
\def\nSGP{\text{$n$}\text{-}\mc{SGP}(\xi)}
\def\mSGP{\text{$m$}\text{-}\mc{SGP}(\xi)}
\def\ext{\xi \text{xt}_{\xi}}
\def\extgp{\xi \text{xt}_{\mc{GP}(\xi)}}
\def\extgi{\xi \text{xt}_{\mc{GI}(\xi)}}
\def\gext{\mc{G}\xi\text{xt}_{\xi}}
\def\nGpd{n\text{-}\xi\text{-}\mc{SG}\text{pd}}
\def\ext{\xi \text{xt}_{\xi}}
\newtheorem{defn}{\bf Definition}[section]
\newtheorem{cor}[defn]{\bf Corollary}
\newtheorem{prop}[defn]{\bf Proposition}
\newtheorem{thm}[defn]{\bf Theorem}
\newtheorem{lem}[defn]{\bf Lemma}
\newtheorem{rem}[defn]{\bf Remark}
\newtheorem{exam}[defn]{\bf Example}
\newtheorem{fact}[defn]{\bf Fact}
\newtheorem{cond}[defn]{\bf Condition}
\def\Pf#1{{\noindent\bf Proof}.\setcounter{equation}{0}}
\def\>#1{{ $\Rightarrow$ }\setcounter{equation}{0}}
\def\<>#1{{ $\Leftrightarrow$ }\setcounter{equation}{0}}
\def\bskip#1{{ \vskip 20pt }\setcounter{equation}{0}}
\def\sskip#1{{ \vskip 5pt }\setcounter{equation}{0}}
\def\mskip#1{{ \vskip 10pt }\setcounter{equation}{0}}
\def\bg#1{\begin{#1}\setcounter{equation}{0}}
\def\ed#1{\end{#1}\setcounter{equation}{0}}




\title{\bf Gorenstein Objects in Extriangulated Categories}

\smallskip
\author{\small Zhenggang He\\
\small E-mail:~zhenggang_he@163.com\\
\small Institute of Mathematics, School of Mathematics Sciences\\
\small Nanjing Normal University, Nanjing \rm210023 China}
\date{}
\maketitle
\baselineskip 15pt
%
%
%
\vskip 10pt%
\noindent {\bf Abstract}: This thesis mainly studies  the relative Gorenstein objects in the extriangulated category $\Mcc$ with a proper class $\xi$ and the related properties of these objects.
We define the notion of the  $\Gproj$ resolution, and study  the relation between $\xi$-projective resolution and $\xi$-$\mathcal{G}$projective resolution for any object $A$ in $\mathcal{C}$.
What is more, we define a particular $\xi$-Gorenstein projective object in $\Mcc$ which called $\xi$-$n$-strongly $\mathcal{G}$projective object and  study the relation between $\xi$-$m$-strongly $\mathcal{G}$projective object and $\xi$-$n$-strongly $\mathcal{G}$projective object whenever $m\neq n$, and give some equivalent characterizations of $\xi$-$n$-strongly $\mathcal{G}$projective objects.

\mskip\


\noindent {\bf MSC2010}: 18A05; 18E10; 18G20


\noindent {\bf Keywords}:~Extriangulated categories; Gorenstein Objects; Stongly Gorenstein Objects.

%
%
\vskip 30pt

\section{Introduction}
Relative homological algebra has been formulated by Hochschild \cite{GH} in categories of modules and 
later Heller, Butler and Horrocks in general categories with a relative abelian structure.
Its main theory includes the extension for a class of objects, and it is natural to consider 
the extension for a class consisting of some triangles in triangulated categories. Based on this,
 Beligianuls \cite{BEL} developed a the homology algebra in triangulated categories which parallels t
 he homological algebra in an exact category in the sense of Quillen. By specifying a class of triangles
  $\xi$, which is called a proper class of triangles, he introduced $\xi$-projective objects,
  $\xi$-projective and $\xi$-global dimensions and their duals.

Auslander and Bridger \cite{AB} introduced a special module with G-dimension zero, which generalized 
the class of finitely generated projective modules over a commutative Noetherian ring. Whereafter, 
Enochs and Jenda \cite{EO1} introduced Gorenstein projective modules over any ring which generalized 
the notion of G-dimension zero modules, and dually they defined Gorenstein injective modules. 
Beligiannis \cite{BEL1} defined $\mathcal{X}$-Gorenstein object in an additive category $\Mcc$ for 
a contravariantly finite subcategory  $\mathcal{X}$ of $\Mcc$ such that any $\mathcal{X}$-epic has
 kernel in $\Mcc$ as a natural generalization of modules of G-dimension zero. In order to extend
  the theory, Asadollahi and Salarian \cite{JS} introduced and studied $\xi$-Gprojective 
  and $\xi$-Ginjective objects, and then $\xi$-Gprojective and $\xi$-Ginjective dimensions
   of objects in a triangulated category with a proper class $\xi$.

Recently, Nakaoka and Palu \cite{HY} introduced an extriangulated category which is extracting 
properties on triangulated categories and exact categories. The class of extriangulated categories
 contains triangulated categories and exact categories as examples. There have been many further 
 researches on extriangulated categories, see \cite{CZZ, YH, ZT,ZZ} etc. Hu, Zhang and Zhou 
 \cite{JDP} developed the above mentioned homological algebra in extriangulated categories. 
 They define a notion of a proper class $\xi$ of $\mathbb{E}$-triangles.  Based on it, they introduced
  the $\xi$-projective objects, $\Gproj$ objects and their duals. Furthermore,  Hu, Zhang and 
  Zhou \cite{JZP} discussed Gorenstein homological dimensions for extriangulated categories and 
  gave some characterizations of $\Gproj$ dimension by using derived functors on $\Mcc$.

Bennis and Mahdou \cite{BN, BM} introduced the notion of strongly Gorenstein projective 
modules and $n$-strongly Gorenstein projective modules. They also gave some equivalent 
characterizations of those modules in terms of the vanishing of some homological groups.
 Yang and Liu \cite{YL} proved that a module $M$ is strongly Gorenstein projective if and 
 only if so is $M\oplus H$ for any projective module $H$. Based on the results mentioned above, 
 Zhao and Huang \cite{ZH} studied the homological behavior of $n$-strongly Gorenstein projective,
  and investigate the relation between $m$-strongly Gorenstein projective modules and $n$-strongly
   Gorenstein projective modules whenever $m\neq n$.

This paper is organized as follows. In section 2, we recall some basic definitions and properties
 which will be of value in later proofs for extriangulated categories. In section 3, we recall 
 some basic definitions and properties of $\xi$-projective and $\xi$-$\mathcal{G}$projective 
 object in an extriangulated category and then we prove that a object has a $\fbzh$ $\xi$-projective 
 resolution  if and only if it has a $\Mcc(-,\P)$-exact $\Gproj$ resolution (see Theorem \ref{ZHFJ}).
  Moreover, we obtain some inequalities for $\xi$-$\mathcal{G}$projective dimension in an
   $\mathbb{E}$-triangle (see Theorem \ref{XMYY}). In section 4, we introduce some special
    $\Gproj$ objects in extriangulated categories  which are called  $\xi$-$n$-strongly
     $\mathcal{G}$projective objects for any integer $n\geqslant1$, we get the relation 
     between $\xi$-$m$-strongly $\mathcal{G}$projective objects  and $\xi$-$n$-strongly
      $\mathcal{G}$projective objects whenever $m\neq n$ (see Theorem \ref{STR2}), 
      and give some equivalent characterizations to the $\xi$-$n$-strongly $\mathcal{G}$projective 
      objects (see Theorem \ref{STR1}).

\section{Preliminaries}

\quad~ In this section, we briefly recall some basic definitions of extriangulated categories. Moreover, we study some related properties which will be of value in later proofs.

Throughout this paper, let $\mathcal{C}$ be an additive category and denote the set of morphisms $A\rightarrow B$ in $\mathcal{C}$ by $\mathcal{C}(A,B)$ for some $A,B\in\mathcal{C}$. If $f\in\mathcal{C}(A,B)$, $g\in\mathcal{C}(B,C)$, then we denote the composition of $f$ and $g$ by $g f$.

Now, we introduce the definition of extriangulated categories. For more details, we refer to \cite{YH} and \cite{HY}.

\begin{defn}\citep[Definition 2.1]{HY}
	Suppose that $\mathcal{C}$ is equipped with an additive bifunctor 
	$\mathbb{E}:\mathcal{C}^{op}\times \mathcal{C}\rightarrow \mathbf{Ab}$. 
	For any pair of objects $A, C$ in $\mathcal{C}$, an element $\delta \in\mathbb{E}(C,A)$ is called an $\mathbb{E}$-\emph{extension}. Thus formally, an $\mathbb{E}$-extension is a triplet $(A,\delta,C)$. Since $\mathbb{E}$ is a functor, for any $a\in \mathcal{C}(A, A')$ and $c\in \mathcal{C}(C, C)$, we have $\mathbb{E}$-extensions $\mathbb{E}(C, a)(\delta)\in \mathbb{E}(C, A') $ and $\mathbb{E}(c,A)(\delta)\in \mathbb{E}(C', A).$ We abbreviately denote them by $a_{*}\delta$ and $c^* \delta$ respectively. In this terminology, we have
	$$ \mathbb{E}(c, a)(\delta)=c^*a_*\delta=a_*c^*\delta$$
	in $\mathbb{E}(C', A')$. For any $ A$, $ C\in \mathcal{C}$, the zero
	element $0\in\mathbb{E}(C, A)$ is called the split $\mathbb{E}$-extension.
\end{defn}

\begin{defn}\citep[Definition 2.3]{HY}
	Let $\delta\in \mathbb{E}(C,A)$ and $\delta '\in \mathbb{E}(C',A')$ be any pair of $\mathbb{E}$-extensions. A morphism $(a,c):\delta\rightarrow\delta'$ of $\mathbb{E}$-extensions is a pair of morphism $a\in \mathcal{C}(A,A')$ and $c\in \mathcal{C}(C,C')$ in $\mathcal{C}$ satisfying the equality
	$$ a_*\delta=c^*\delta '.$$
\end{defn}

\begin{defn}\citep[Definition 2.6]{HY}
	Let $\delta=(A,\delta,C)$ and $\delta'=(A',\delta ',C')$ be any pair of $\mathbb{E}$-extensions. Let $C \stackrel{l_C}\longrightarrow C\oplus C' \stackrel{l_{C'} }\longleftarrow C'$ and $A \stackrel{p_A}\longleftarrow A\oplus A'\stackrel{p_{A'}}\longrightarrow A'$ be coproduct and product in $\mathcal{C}$, respectively. We have a natural isomorphism
	$$ \mathbb{E}(C\oplus C',A\oplus A')\simeq\mathbb{E}(C,A) \oplus \mathbb{E}(C,A')\oplus \mathbb{E}(C',A)\oplus \mathbb{E}(C',A')$$
	by the additivity of $\mathbb{E}$.
\end{defn}

	Let $\delta\oplus\delta'\in \mathbb{E}(C\oplus C',A\oplus A')$ be the element corresponding to $(\delta,0,0,\delta')$ through this isomorphism.

\begin{defn}\citep[Definition 2.7]{HY}
	Let $A$, $C\in \mathcal{C}$ be any pair of objects. Two sequences of morphisms $A\stackrel{x}\longrightarrow B\stackrel{y}\longrightarrow C$ and $A\stackrel{x'}\longrightarrow B'\stackrel{y'}\longrightarrow C$ in $\mathcal{C}$ are said to be \emph{equivalent} if there exists an isomorphism $b\in \mathcal{C}(B,B')$ which makes the following diagram commutative.
	$$
	\xymatrix{
		A\ar@{=}[d]\ar[r]^x &B\ar[d]^b_{\simeq}\ar[r]^y &C\ar@{=}[d]\\
		A\ar[r]^{x'} &B'\ar[r]^{y'} &C}
	$$
\end{defn}	

We denote the equivalence class of $A\stackrel{x}\longrightarrow B\stackrel{y}\longrightarrow C$ by [$A\stackrel{x}\longrightarrow B\stackrel{y}\longrightarrow C$].

\begin{defn}\citep[Definition 2.8]{HY}
	(1) For any $A$, $C\in \mathcal{C}$, we denote as
	$$0=[A \stackrel{\begin{tiny}\begin{bmatrix}
		1 \\
		0
		\end{bmatrix}\end{tiny}}{\longrightarrow} A \oplus C \stackrel{\begin{tiny}\begin{bmatrix}
		0&1
		\end{bmatrix}\end{tiny}}{\longrightarrow} C].$$
	(2) For any $[A\stackrel{x}\longrightarrow B\stackrel{y}\longrightarrow C]$ and $[A'\stackrel{x'}\longrightarrow B'\stackrel{y'}\longrightarrow C']$, we denote as
	$$
	[A\stackrel{x}\longrightarrow B\stackrel{y}\longrightarrow C]\oplus[A'\stackrel{x'}\longrightarrow B'\stackrel{y'}\longrightarrow C']=[A\oplus A'\stackrel{x\oplus x'}\longrightarrow B\oplus B'\stackrel{y\oplus y'}\longrightarrow C\oplus C'].
	$$
\end{defn}

\begin{defn}\citep[Definition 2.9]{HY}
	Let $\mathfrak{s}$ be a correspondence which associates an equivalence class $\mathfrak{s}(\delta)=[A\stackrel{x}{\longrightarrow}B\stackrel{y}{\longrightarrow}C]$ to any $\mathbb{E}$-extension $\delta\in\mathbb{E}(C,A)$ . This $\mathfrak{s}$ is called a  \emph{realization} of $\mathbb{E}$, if for any morphism $(a,c):\delta\rightarrow\delta'$ with $\mathfrak{s}(\delta)=[A\stackrel{x}\longrightarrow B\stackrel{y}\longrightarrow C]$ and $\mathfrak{s}(\delta')=[A'\stackrel{x'}\longrightarrow B'\stackrel{y'}\longrightarrow C']$, there exists $b\in \mathcal{C}$ which makes the following  diagram commutative
	$$\xymatrix{
		& A \ar[d]_-{a} \ar[r]^-{x} & B  \ar[r]^{y}\ar[d]_-{b} & C \ar[d]_-{c}    \\
		&A'\ar[r]^-{x'} & B' \ar[r]^-{y'} & C'.    }
	$$
	In the above situation, we say that the triplet $(a,b,c)$ realizes $(a,b)$.
\end{defn}

\begin{defn}\citep[Definition 2.10]{HY}
	Let $\mathcal{C},\mathbb{E}$ be as above. A realization $\mathfrak{s}$ of $\mathbb{E}$ is said to be {\em additive} if it satisfies the following conditions.
	
	(a) For any $A,~C\in\mathcal{C}$, the split $\mathbb{E}$-extension $0\in\mathbb{E}(C,A)$ satisfies $\mathfrak{s}(0)=0$.
	
	(b) $\mathfrak{s}(\delta\oplus\delta')=\mathfrak{s}(\delta)\oplus\mathfrak{s}(\delta')$ for any pair of $\mathbb{E}$-extensions $\delta$ and $\delta'$.
	
\end{defn}

\begin{defn}\citep[Definition 2.12]{HY}
	A triplet $(\mathcal{C}, \mathbb{E},\mathfrak{s})$ is called an \emph{ extriangulated category} if it satisfies the following conditions. \\
	$\rm(ET1)$ $\mathbb{E}$: $\mathcal{C}^{op}\times\mathcal{C}\rightarrow \mathbf{Ab}$ is a biadditive functor.\\
	$\rm(ET2)$ $\mathfrak{s}$ is an additive realization of $\mathbb{E}$.\\
	$\rm(ET3)$ Let $\delta\in\mathbb{E}(C,A)$ and $\delta'\in\mathbb{E}(C',A')$ be any pair of $\mathbb{E}$-extensions, realized as
	$$\mathfrak{s}(\delta)=[A\stackrel{x}{\longrightarrow}B\stackrel{y}{\longrightarrow}C] \quad \text{and} \quad \mathfrak{s}(\delta')=[A'\stackrel{x'}{\longrightarrow}B'\stackrel{y'}{\longrightarrow}C'].$$ For any commutative square	
    $$\xymatrix{
		A \ar[d]_{a} \ar[r]^{x} & B \ar[d]_{b} \ar[r]^{y} & C \\
		A'\ar[r]^{x'} &B'\ar[r]^{y'} & C'}$$
	in $\mathcal{C}$, there exists a morphism $(a,c)$: $\delta\rightarrow\delta'$ which is realized by $(a,b,c)$.\\
	$\rm(ET3)^{op}$ Dual of $\rm(ET3)$.\\
    $\rm(ET4)$ Let $\delta\in \mathbb{E}(D,A)$ and $\delta'\in \mathbb{E}(F,B)$ be $\mathbb{E}$-extensions respectively realized by
	$$A\stackrel{f}{\longrightarrow}B\stackrel{f'}{\longrightarrow}D\quad \text{and }\quad B\stackrel{g}{\longrightarrow}C\stackrel{g'}{\longrightarrow}F.$$
	Then there exist an object $E\in\mathcal{C}$, a commutative diagram
	$$\xymatrix{
		A \ar@{=}[d]\ar[r]^{f} &B\ar[d]_{g} \ar[r]^{f'} & D\ar[d]^{d} \\
		A \ar[r]^{h} & C\ar[d]_{g'} \ar[r]^{h'} & E\ar[d]^{e} \\
		& F\ar@{=}[r] & F   }$$
	in $\mathcal{C}$, and an $\mathbb{E}$-extension $\delta''\in \mathbb{E}(E,A)$ realized by $A\stackrel{h}{\longrightarrow}C\stackrel{h'}{\longrightarrow}E$, which satisfy the following compatibilities.\\
	$(\textrm{i})$ $D\stackrel{d}{\longrightarrow}E\stackrel{e}{\longrightarrow}F$ realizes $f'_*\delta'$,\\
	$(\textrm{ii})$ $d^*\delta''=\delta$,\\
	$(\textrm{iii})$ $f_*\delta''=e^*\delta$.\\
	$\rm(ET4)^{op}$ Dual of $\rm(ET4)$.
\end{defn}
For examples of extriangulated categories, see \citep[Example 2.13]{HY} and \citep[Remark 3.3]{JDP}.

We will use the following terminology.

\begin{defn}\citep[Definition 2.15 and 2.19]{HY}
	Let $(\mathcal{C},\mathbb{E},\mathfrak{s})$ be an extriangulated category.
	
	(1) A sequence $A\stackrel{x}\longrightarrow B\stackrel{y}\longrightarrow C$ is called \emph{conflation} if it realizes some $\mathbb{E}$-extension $\delta\in \mathbb{E}(C,A)$. In this case, $x$ is called an\emph{ inflation} and $y$ is called a \emph{deflation}.
	
	(2) If a conflation $A\stackrel{x}\longrightarrow B\stackrel{y}\longrightarrow C$ realizes $\delta\in \mathbb{E}(C,A)$, we call the pair ($A\stackrel{x}\longrightarrow B\stackrel{y}\longrightarrow C, \delta$) an $\mathbb{E}$-\emph{triangle}, and write it by
	$$\xymatrix{
		A \ar[r]^{x} & B  \ar[r]^{y} & C  \ar@{-->}[r]^{\delta} & }.$$
	We usually don't write this ``$\delta$" if it not used in the argument.
	
	(3) Let $\xymatrix{
		A \ar[r]^{x} & B  \ar[r]^{y} & C  \ar@{-->}[r]^{\delta} & }$ and $\xymatrix{
		A '\ar[r]^{x'} & B'  \ar[r]^{y'} & C'  \ar@{-->}[r]^{\delta'} & }$ be any pair of $\mathbb{E}$-triangles. If a triplet $(a,b,c)$ realizes $(a,c):\delta \rightarrow\delta'$, then we write it as
	
	$$\xymatrix{
		A \ar[d]_{a} \ar[r]^{x} & B  \ar[r]^{y}\ar[d]_{b} & C \ar[d]_{c} \ar@{-->}[r]^{\delta} &  \\
		A'\ar[r]^{x'} & B' \ar[r]^{y'} & C' \ar@{-->}[r]^{\delta'} &    }
	$$
	and call $(a,b,c)$ a\emph{ morphism }of $\mathbb{E}$-triangles.
	
	(4) An $\mathbb{E}$-triangle $\xymatrix{A\ar[r]^{x}&B\ar[r]^{y}&C\ar@{-->}[r]^{\delta}&}$ is called\emph{ split} if $\delta=0$.
\end{defn}

Next, we will introduce some basic properties of extriangulated category.

Assume that $(\mathcal{C},\mathbb{E},\mathfrak{s})$ is an extriangulated category. By Yoneda's Lemma, any $\mathbb{E}$-extension $\delta\in \mathbb{E}(C,A)$ induces natural transformations
$$
\delta_{\sharp}:\mathcal{C}(-,C)\Rightarrow \mathbb{E}(-,A) \text{\;and\;} \delta^{\sharp}:\mathcal{C}(A,-)\Rightarrow \mathbb{E}(C,-).
$$
For any $X\in \mathcal{C}$, $(\delta_{\sharp})_X$ and $\delta^{\sharp}_X$ are defined as follows:

(1) $(\delta_{\sharp})_X:\mathcal{C}(X,C)\Rightarrow\mathbb{E}(X,A); f\mapsto f^*\delta.$

(2) $\delta^{\sharp}_X:\mathcal{C}(A,X)\Rightarrow\mathbb{E}(C,X); g\mapsto g_*\delta.$

\begin{lem}\label{FJ}\citep[Corollary 3.5]{HY}
	Assume that $(\mathcal{C},\mathbb{E},\mathfrak{s})$ satisfies $\rm(ET1)$, $\rm(ET2)$, $\rm(ET3)$ and $\rm(ET3)^{op}$. Let
	$$
	\xymatrix{
		A\ar[r]^x\ar[d]^a&B\ar[r]^y\ar[d]^b&C\ar[d]^c\ar@{-->}[r]^{\delta} & \\
		A'\ar[r]^{x'}&B'\ar[r]^{y'}&C'\ar@{-->}[r]^{\delta'} &
	}
	$$
	be any morphism of $\mathbb{E}$-triangles. Then the following are equivalent.
	
	(1) $a$ factors through $x$.
	
	(2) $a_*\delta=c^*\delta'=0$.
	
	(3) $c$ factors through $y'$.
\end{lem}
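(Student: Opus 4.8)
The plan is to collapse all three conditions onto the single vanishing $a_*\delta=0$. First I record the basic identity coming from the hypothesis: since $(a,b,c)$ is a morphism of $\mathbb{E}$-triangles, it realizes a morphism $(a,c)\colon\delta\to\delta'$ of $\mathbb{E}$-extensions (Definition 2.9), so by the very definition of such a morphism (Definition 2.2) we have $a_*\delta=c^*\delta'$ in $\mathbb{E}(C,A')$. Consequently the two equalities asserted in (2) are one and the same condition, namely that this common element is $0$; it therefore suffices to establish the two biconditionals (1)$\Leftrightarrow(a_*\delta=0)$ and (3)$\Leftrightarrow(c^*\delta'=0)$ and then read them off together.

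For these two biconditionals I would use the fundamental exactness attached to an $\mathbb{E}$-triangle. Concretely, for any $\mathbb{E}$-triangle $U\xrightarrow{u}V\xrightarrow{w}W$ with class $\eta\in\mathbb{E}(W,U)$, every test object $X$, and morphisms $g\in\mathcal{C}(U,X)$, $f\in\mathcal{C}(X,W)$, one has $g_*\eta=0$ iff $g$ factors through $u$, and dually $f^*\eta=0$ iff $f$ factors through $w$. Applying the first statement to the top row (so $u=x$, $X=A'$, $g=a$, $\eta=\delta$) gives exactly (1)$\Leftrightarrow(a_*\delta=0)$, since $a_*\delta=\delta^{\sharp}_{A'}(a)$; applying the dual statement to the bottom row (so $w=y'$, $X=C$, $f=c$, $\eta=\delta'$) gives (3)$\Leftrightarrow(c^*\delta'=0)$, since $c^*\delta'=(\delta'_{\sharp})_C(c)$. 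Combined with $a_*\delta=c^*\delta'$, this yields (1)$\Leftrightarrow$(2)$\Leftrightarrow$(3).

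If one does not wish to quote those exactness facts wholesale, the four implications can be produced directly. For (2)$\Rightarrow$(1) and (2)$\Rightarrow$(3) only the realization property (Definition 2.6) together with $\mathfrak{s}(0)=0$ (Definition 2.7) is needed: when $a_*\delta=0$, realize the morphism $(a,\mathrm{id}_C)\colon\delta\to a_*\delta$ by a triplet whose middle component is $b=\bigl[\begin{smallmatrix}b_1\\ b_2\end{smallmatrix}\bigr]\colon B\to A'\oplus C$ into the split conflation $A'\to A'\oplus C\to C$; commutativity of the left-hand square forces $b_1x=a$, so $a$ factors through $x$. Dually, when $c^*\delta'=0$, realize $(\mathrm{id}_{A'},c)\colon c^*\delta'\to\delta'$ by a middle component $d=[\,d_1\ d_2\,]\colon A'\oplus C\to B'$, and the right-hand square gives $y'd_2=c$. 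For the reverse implications (1)$\Rightarrow$(2) and (3)$\Rightarrow$(2) I would invoke the vanishing statements $x_*\delta=0$ and $(y')^*\delta'=0$: from $a=b_1x$ we get $a_*\delta=(b_1)_*x_*\delta=0$, and from $c=y'd_2$ we get $c^*\delta'=(d_2)^*(y')^*\delta'=0$, using functoriality of $\mathbb{E}$.

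The genuine content — and the step I expect to be the main obstacle — is exactly the homological input behind both routes: the two biconditionals above, equivalently the vanishing facts $u_*\eta=0$ and $w^*\eta=0$ for an inflation $u$ and a deflation $w$, together with the lifting that produces a factorization once an extension class dies. These are the places where the axioms (ET3), (ET3)$^{\mathrm{op}}$ and the additivity of $\mathfrak{s}$ (Definition 2.8) actually do the work; I would establish them first (or cite them from the basic theory of $\mathbb{E}$-triangles). Everything downstream is formal: the morphism identity $a_*\delta=c^*\delta'$, block-matrix bookkeeping against the split conflation, and functoriality of $a\mapsto a_*\delta$ and $c\mapsto c^*\delta'$.
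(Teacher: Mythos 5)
The paper offers no proof of this lemma at all --- it is imported verbatim as \citep[Corollary 3.5]{HY} --- so there is no in-paper argument to compare yours against; judged on its own, your proof is correct and is essentially the standard one from \citep{HY}. The reduction via $a_*\delta=c^*\delta'$, the factorizations obtained by realizing $(a,1_C)\colon\delta\to a_*\delta=0$ and $(1_{A'},c)\colon c^*\delta'=0\to\delta'$ against the split conflation $A'\to A'\oplus C\to C$, and the converses via functoriality together with $x_*\delta=0$ and $(y')^*\delta'=0$ are all sound. One caution: your preferred first route, quoting the six-term exact sequences wholesale (the analogue of this paper's \citep[Corollary 3.12]{HY} lemma), should be avoided here, both because those sequences are stated for full extriangulated categories while the present lemma assumes only $\rm(ET1)$--$\rm(ET3)^{op}$, and because in \citep{HY} the exactness at $\mathcal{C}(A,-)$ and at $\mathcal{C}(-,C)$ is established essentially together with --- and is equivalent to --- the statement being proved, so that route is circular in the source's logical order. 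Your direct argument is the right one; the only outstanding debts are the vanishings $x_*\delta=0$ and $(y')^*\delta'=0$, which you correctly flag as the genuine homological input and which are basic facts proved early in \citep{HY} from $\rm(ET2)$, $\rm(ET3)$, $\rm(ET3)^{op}$. (A trivial bookkeeping point: additivity of $\mathfrak{s}$ is Definition 2.7 in this paper, not 2.8.)
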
	
	\noindent In particular, in the case $\delta = \delta'$ and $(a, b, c) =(1_A, 1_B, 1_C )$, we have
	$$
	x \text{\; is a section} \Leftrightarrow \delta \text{\; is split} \Leftrightarrow y \text{\;is a retraction}.
	$$

    \begin{lem}\citep[Corollary 3.12]{HY}
	Let $(\mathcal{C},\mathbb{E},\mathfrak{s})$ be an extriangulated category, and
	$$
	\xymatrix{
		A\ar[r]^x &B\ar[r]^y & C \ar@{-->}[r]^{\delta'} &
	}
	$$
	an $\mathbb{E}$-triangle. Then there are long exact sequences:
	$$
	\xymatrix{
		\mathcal{C}(C,-)\ar[r]^{\mathcal{C}(y,-)} &{\mathcal{C}(B,-)}\ar[r]^{\mathcal{C}(x,-)} &{\mathcal{C}(A,-)}\ar[r]^{\delta^{\sharp}}&{\mathbb{E}(C,-)}\ar[r]^{\mathbb{E}(y,-)} &{\mathbb{E}(B,-)}\ar[r]^{\mathbb{E}(x,-)}&{\mathbb{E}(A,-)}
	};
	$$
	$$
	\xymatrix{
		\mathcal{C}(-,A)\ar[r]^{\mathcal{C}(-,x)} &{\mathcal{C}(-,B)}\ar[r]^{\mathcal{C}(-,y)} &{\mathcal{C}(-,C)}\ar[r]^{\delta_{\sharp}}&{\mathbb{E}(-,A)}\ar[r]^{\mathbb{E}(-,x)} &{\mathbb{E}(-,B)}\ar[r]^{\mathbb{E}(-,y)}&{\mathbb{E}(-,C)}
	}.
	$$
\end{lem}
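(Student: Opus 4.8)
The plan is to fix an object $X\in\mathcal{C}$ and to prove exactness of the first sequence at each of its four inner terms; the second sequence is then the first one applied to the opposite extriangulated category $(\mathcal{C}^{\mathrm{op}},\mathbb{E}^{\mathrm{op}},\mathfrak{s}^{\mathrm{op}})$, in which $A\xrightarrow{x}B\xrightarrow{y}C$ reverses to $C\xrightarrow{y}B\xrightarrow{x}A$ and the functors $\mathcal{C}(-,X)$, $\mathbb{E}(-,X)$ are interchanged with $\mathcal{C}(X,-)$, $\mathbb{E}(X,-)$; so I would treat only the covariant sequence $\mathcal{C}(X,A)\to\mathcal{C}(X,B)\to\mathcal{C}(X,C)\xrightarrow{(\delta_{\sharp})_X}\mathbb{E}(X,A)\to\mathbb{E}(X,B)\to\mathbb{E}(X,C)$ and invoke duality for the other. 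First I would record the vanishing of all consecutive composites, which reduces to the three identities $yx=0$, $x_*\delta=0$ and $y^*\delta=0$: the first follows by applying $\rm(ET3)$ to the morphism from the split $\mathbb{E}$-triangle $A\xrightarrow{1_A}A\to 0$ to $\delta$ determined by the inflation square $(1_A,x)$, whose resulting deflation square reads $yx=0$; the other two follow from Lemma \ref{FJ}, since, for instance, $(1_A,y)$ is a morphism of $\mathbb{E}$-extensions $y^*\delta\to\delta$ (realized by some $b$, as $\mathfrak{s}$ is a realization of $\mathbb{E}$) whose right-hand leg $y$ factors through $y$, and the implication (3)$\Rightarrow$(2) of Lemma \ref{FJ} then forces $y^*\delta=0$.

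For the nontrivial inclusions $\ker\subseteq\operatorname{im}$ I would move from left to right. At $\mathcal{C}(X,B)$, if $g\colon X\to B$ satisfies $yg=0$, then the deflation-side square consisting of $X\to 0$, $B\xrightarrow{y}C$ and the verticals $(g,0)$ commutes, so $\rm(ET3)^{op}$ completes it to a morphism of $\mathbb{E}$-triangles $[X\xrightarrow{1_X}X\to 0]\to\delta$ whose inflation square yields $a\colon X\to A$ with $xa=g$, so that $x$ is a weak kernel of $y$. At $\mathcal{C}(X,C)$, if $f^*\delta=0$, then $(1_A,f)\colon f^*\delta\to\delta$ is realized by some $b$ over the split representative $A\xrightarrow{\left[\begin{smallmatrix}1\\0\end{smallmatrix}\right]}A\oplus X\xrightarrow{[\,0\ 1\,]}X$ of $f^*\delta$, and precomposing $b$ with the inclusion $X\to A\oplus X$ produces $g\colon X\to B$ with $yg=f$. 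These two steps are the routine weak-(co)kernel bookkeeping.

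The remaining two points are where the real work lies, since there I must manufacture a genuinely new object rather than merely a morphism. At $\mathbb{E}(X,A)$, given $\rho$ with $x_*\rho=0$, I would realize $\rho$ as $A\xrightarrow{u}Y\xrightarrow{v}X$ and form the pushout comparison $(x,w,1_X)\colon\rho\to x_*\rho$ (realized since $\mathfrak{s}$ is a realization); comparing $w$ against the split representative of $x_*\rho=0$ extracts a component $w_1\colon Y\to B$ with $w_1u=x$, and feeding the inflation square $(1_A,w_1)$ into $\rm(ET3)$ returns the required $f\colon X\to C$ with $f^*\delta=\rho$. At $\mathbb{E}(X,B)$, given $\sigma$ with $y_*\sigma=0$, I would realize $\sigma$ and apply $\rm(ET4)$ to the pair $(\delta,\sigma)$; its compatibility (i) says the new $\mathbb{E}$-triangle $C\xrightarrow{d}E\xrightarrow{e}X$ realizes $y_*\sigma=0$, so by Lemma \ref{FJ} the deflation $e$ admits a section $s$, and compatibility (iii), $x_*\delta''=e^*\sigma$, then gives $\sigma=(es)^*\sigma=s^*e^*\sigma=s^*x_*\delta''=x_*s^*\delta''$, exhibiting $\sigma$ as $x_*\tau$ with $\tau=s^*\delta''$.

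I expect the $\rm(ET4)$ step at $\mathbb{E}(X,B)$ to be the main obstacle: it is the only place that truly needs the octahedral-type axiom, and it requires correctly matching the realization $C\xrightarrow{d}E\xrightarrow{e}X$ of $f'_*\delta'$ with the vanishing of $y_*\sigma$ and then tracking the pushforward--pullback compatibility through the bifunctorial identity $s^*x_*=x_*s^*$. Once the covariant sequence is settled, the contravariant one is immediate from the facts that the opposite of an extriangulated category is again extriangulated (with $\rm(ET3)$ and $\rm(ET4)$ trading places with their duals) and that Lemma \ref{FJ} is self-dual.
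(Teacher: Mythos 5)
Your argument is correct at every step: the vanishing of the composites via Lemma \ref{FJ} and $\rm(ET3)$, the weak-(co)kernel arguments at $\mathcal{C}(X,B)$ and $\mathcal{C}(X,C)$, the $\rm(ET3)$ argument at $\mathbb{E}(X,A)$, and the $\rm(ET4)$-plus-section argument at $\mathbb{E}(X,B)$ (where compatibility (iii) should indeed read $x_*\delta''=e^*\sigma$, i.e.\ $f_*\delta''=e^*\delta'$ in the notation of $\rm(ET4)$). The paper gives no proof of this lemma --- it is imported verbatim from \citep[Corollary 3.12]{HY} --- and your proof is essentially the one given there, so there is nothing further to compare.
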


\begin{lem}\citep[Lemma 3.8]{JDP}
	(1) Let $(\mathcal{C},\mathbb{E},\mathfrak{s})$ be an extriangulated category, $A\stackrel{f}\longrightarrow B\stackrel{f'}\longrightarrow C\stackrel{\delta_f}\dashrightarrow$, $B\stackrel{g}\longrightarrow D\stackrel{g'}\longrightarrow F\stackrel{\delta_g}\dashrightarrow$ and $A\stackrel{h}\longrightarrow D\stackrel{h'}\longrightarrow E\stackrel{\delta_h}\dashrightarrow$ be any triplet of $\mathbb{E}$-triangles satisfying $h=gf$. Then there are morphism $d$ and $e$ in $\mathcal{C}$ which make the diagram
	$$
	\xymatrix{
		A\ar@{=}[d]\ar[r]^{f}&B\ar[d]^{g}\ar[r]^{f'}&C\ar[d]^d\ar@{-->}[r]^{\delta_f}&\\
		A\ar[r]^{h}&D\ar[r]^{h'}\ar[d]^{g'}&E\ar[d]^e \ar@{-->}[r]^{\delta_h}&\\
		&F\ar@{=}[r]\ar@{-->}[d]^{\delta_g}&F\ar@{-->}[d]^{f_*'(\delta_g)}\\
		& &
	}
	$$
	commutative, and satisfying the following compatibilities.
	
	(\romannumeral 1)  $C\stackrel{d}\longrightarrow E\stackrel{e} \longrightarrow F\stackrel{f_*(\delta_g)}\dashrightarrow$ is an $\mathbb{E}$-triangle.
	
	(\romannumeral 2) $d^*(\delta_h)=\delta_f$.
	
	(\romannumeral 3) $e^*(\delta_g)=f_*(\delta_h)$.
	
	(\romannumeral 4) $B \stackrel{\begin{tiny}\begin{bmatrix}
		g \\
		f'
		\end{bmatrix}\end{tiny}}{\longrightarrow} D \oplus C \stackrel{\begin{tiny}\begin{bmatrix}
		h\ -d
		\end{bmatrix}\end{tiny}}{\longrightarrow} E \stackrel{f_*(\delta_h')}\dashrightarrow$ is an $\mathbb{E}$-triangle.
	
	(2) Dual of (1).
\end{lem}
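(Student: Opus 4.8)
The plan is to read off the commutative diagram together with the first three compatibilities directly from axiom $\rm(ET4)$, and to treat (iv) by a separate pushout argument, since it is the only assertion not already contained in $\rm(ET4)$.

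First I would apply $\rm(ET4)$ to the two $\mathbb{E}$-triangles $A\stackrel{f}\longrightarrow B\stackrel{f'}\longrightarrow C\stackrel{\delta_f}\dashrightarrow$ and $B\stackrel{g}\longrightarrow D\stackrel{g'}\longrightarrow F\stackrel{\delta_g}\dashrightarrow$, whose inflations compose to the inflation $h=gf$ of the third triangle. After matching the data of $\rm(ET4)$ with the present notation (so that the object named $C$ in $\rm(ET4)$ is our $D$, and the object named $D$ there is our $C$), the axiom furnishes an object $E$, morphisms $d\colon C\to E$ and $e\colon E\to F$, the displayed commutative diagram, and an $\mathbb{E}$-triangle on the composite $h$. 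By the uniqueness of the cofiber of the inflation $h$ (cf. Lemma \ref{FJ}), I may identify the triangle produced by $\rm(ET4)$ with the given one $A\stackrel{h}\longrightarrow D\stackrel{h'}\longrightarrow E\stackrel{\delta_h}\dashrightarrow$, transporting $d$ and $e$ along that isomorphism. The compatibilities of $\rm(ET4)$ then read precisely as (i) that $C\stackrel{d}\longrightarrow E\stackrel{e}\longrightarrow F$ realizes $f'_*\delta_g$, (ii) $d^*\delta_h=\delta_f$, and (iii) $f_*\delta_h=e^*\delta_g$.

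The substance lies in (iv). The first observation is that $f_*\delta_f=0$, which is immediate from the second long exact sequence recalled above: evaluating it at $C$, the element $\delta_f=(1_C)^*\delta_f$ lies in the image of $(\delta_f)_\sharp$, hence in the kernel of $f_*=\mathbb{E}(C,f)$. Consequently $h_*\delta_f=(gf)_*\delta_f=g_*(f_*\delta_f)=0$. I would then push the first $\mathbb{E}$-triangle out along $h\colon A\to D$ (the existence of such a pushout, and the bicartesian property of its defining square, being a consequence of $\rm(ET4)$). Since the resulting extension $h_*\delta_f$ vanishes, the bottom row of this pushout is the split $\mathbb{E}$-triangle $D\stackrel{\binom{1}{0}}\longrightarrow D\oplus C\stackrel{(0\ 1)}\longrightarrow C\stackrel{0}\dashrightarrow$; in particular the pushout of the span $D\stackrel{h}\longleftarrow A\stackrel{f}\longrightarrow B$ is $D\oplus C$, and the second leg $B\to D\oplus C$ may be taken to be $\binom{g}{f'}$, its components being fixed by $\binom{g}{f'}f=\binom{gf}{f'f}=\binom{h}{0}$ and $(0\ 1)\binom{g}{f'}=f'$ (using $f'f=0$).

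Since the pushout of a span is symmetric in its two legs, the same object $D\oplus C$ is the pushout of $h$ along $f$; pushing the third $\mathbb{E}$-triangle out along $f\colon A\to B$ therefore yields an $\mathbb{E}$-triangle $B\stackrel{\binom{g}{f'}}\longrightarrow D\oplus C\stackrel{v}\longrightarrow E\stackrel{f_*\delta_h}\dashrightarrow$, which already has the required inflation, middle term, and connecting extension $f_*\delta_h$ (which equals $e^*\delta_g$ by (iii)). It remains to identify $v=(h'\ -d)$: compatibility of this pushout with the third triangle forces $v\binom{1}{0}=h'$, so $v=(h'\ v_2)$, while $v\binom{g}{f'}=0$ together with $h'g=df'$ gives $v_2f'=-df'$, whence $v_2=-d$ because the deflation $f'$ is an epimorphism (again by the long exact sequences recalled above). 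This yields (iv). The hard part is precisely this assertion: unlike (i)--(iii) it is not delivered by $\rm(ET4)$, and the decisive simplification is the vanishing $f_*\delta_f=0$, which collapses the relevant pushout object to the direct sum $D\oplus C$; everything after that is bookkeeping with the matrix maps and with the fact that deflations are epimorphisms, both extracted from the long exact sequences recalled above.
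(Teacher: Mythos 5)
A preliminary remark on the comparison you asked for: the paper offers no proof of this statement at all --- it is quoted verbatim from \citep[Lemma 3.8]{JDP}, whose part (iv) in turn rests on \citep[Proposition 3.17]{HY} --- so your argument can only be measured against that source. Your derivation of the diagram and of (i)--(iii) is the intended one: apply $\mathrm{(ET4)}$ to $\delta_f$ and $\delta_g$ and identify the cone it produces on $h=gf$ with the given $E$ (the uniqueness of cones you invoke follows from $\mathrm{(ET3)}$ together with \citep[Corollary 3.6]{HY}, rather than from Corollary 3.5, but that is cosmetic). The vanishing $f_*\delta_f=0$, hence $h_*\delta_f=0$, and the consequent splitting of the cobase change of $\delta_f$ along $h$ are also correct, and they are indeed the right starting point for (iv).

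Your proof of (iv) itself, however, has two genuine gaps, both stemming from the same misconception: in an extriangulated category the cobase-change square is only a \emph{weak} pushout, and deflations are \emph{not} epimorphisms (in a triangulated category, viewed as extriangulated, the deflation $f'$ of a triangle is almost never epic; correspondingly, the long exact sequence of \citep[Corollary 3.12]{HY} does not begin with $0$). Concretely: (a) Lemma \ref{BH}(2) applied to $\delta_f$ and $\delta_h$ does give $M\simeq D\oplus C$ and a conflation $B\stackrel{m}{\longrightarrow} D\oplus C\stackrel{v}{\longrightarrow} E$ realizing $f_*\delta_h$, but the commutativity constraints only force $m=\binom{u}{f'}$ with $uf=h=gf$; since $f$ is not epic this does not yield $u=g$, and you cannot simply substitute $\binom{g}{f'}$ for $m$, because equivalence of realizations is an equivalence of \emph{sequences}, so changing the structure map destroys the guarantee that the sequence still realizes $f_*\delta_h$. (b) Even granting the inflation, your step ``$v_2f'=-df'$ whence $v_2=-d$'' uses that $f'$ is an epimorphism, which fails. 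Pinning down the two matrices $\binom{g}{f'}$ and $(h'\;{-d})$ is precisely the hard content of \citep[Proposition 3.17]{HY}: the proof there builds an explicit comparison morphism between $D\oplus C$ and a chosen realization of the relevant extension and verifies that it is an isomorphism by chasing the long exact sequences. That argument cannot be reduced to the bookkeeping you describe, so (iv) remains unproved in your proposal.
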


\begin{lem}\label{BH}\citep[Corollary 3.15]{HY}
	Let $(\mathcal{C},\mathbb{E},\mathfrak{s})$ be an extriangulated category. Then the following hold.
	
	(1) Let $C$  be any object, and let $A_1\stackrel{x_1}{\longrightarrow}B_1\stackrel{y_1}{\longrightarrow}C\stackrel{\delta_1}\dashrightarrow$  and $A_2\stackrel{x_2}{\longrightarrow}B_2\stackrel{y_2}{\longrightarrow}C\stackrel{\delta_2}\dashrightarrow$  be any pair of $\mathbb{E}$-triangles. Then there is a commutative diagram in $\mathcal{C}$
	$$
	\xymatrix{
		&{A_2}\ar[d]^{m_2}\ar@{=}[r]&{A_2}\ar[d]^{x_2}\\
		{A_1}\ar@{=}[d]\ar[r]^{m_1}&M\ar[r]^{e_1}\ar[d]^{e_2}&{B_2}\ar[d]^{y_2}\\
		 {A_1}\ar[r]^{x_1}&{B_1}\ar[r]^{y_1}&C}
	$$
	\noindent which satisfies $\mathfrak{s}(y^*_2\delta_1)=[A_1\stackrel{m_1}{\longrightarrow}M\stackrel{e_1}{\longrightarrow}B_2]$ and $\mathfrak{s}(y^*_1\delta_2)=[A_2\stackrel{m_2}{\longrightarrow}M\stackrel{e_2}{\longrightarrow}B_1]$.
	
	(2)  Let $A$  be any object, and let $A\stackrel{x_1}{\longrightarrow}B_1\stackrel{y_1}{\longrightarrow}C_1\stackrel{\delta_1}\dashrightarrow$  and $A\stackrel{x_2}{\longrightarrow}B_2\stackrel{y_2}{\longrightarrow}C_2\stackrel{\delta_2}\dashrightarrow$  be any pair of $\mathbb{E}$-triangles. Then there is a commutative diagram in $\mathcal{C}$
	$$
	\xymatrix{
		A\ar[d]^{x_2}\ar[r]^{x_1}&{B_1}\ar[d]^{m_2}\ar[r]^{y_1}&{C_1}\ar@{=}[d]\\
		{B_2}\ar[d]^{y_2}\ar[r]^{m_1}&M\ar[r]^{e_1}\ar[d]^{e_2}&{C_1}\\
		{C_2}\ar@{=}[r]&{C_2}
	}
	$$
	\noindent which satisfies $\mathfrak{s}(x_{2_*}\delta_1)=[B_2\stackrel{m_1}{\longrightarrow}M\stackrel{e_1}{\longrightarrow}C_1]$ and $\mathfrak{s}(x_{1_*}\delta_2)=[B_1\stackrel{m_2}{\longrightarrow}M\stackrel{e_2}{\longrightarrow}C_2]$.
\end{lem}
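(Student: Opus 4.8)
The plan is to view (1) as the construction of a homotopy pullback of $y_1$ and $y_2$ over $C$ and (2) as the dual homotopy pushout of $x_1$ and $x_2$ under $A$. Since $(\mathcal{C}^{op},\mathbb{E}^{op},\mathfrak{s}^{op})$ is again an extriangulated category and interchanges the two statements, I would prove (1) in detail and then obtain (2) by applying (1) in $\mathcal{C}^{op}$. For (1), the first step is to build the middle object $M$ by pulling $\delta_1$ back along the deflation $y_2$: realize the $\mathbb{E}$-extension $y_2^{*}\delta_1\in\mathbb{E}(B_2,A_1)$ as an $\mathbb{E}$-triangle $A_1\xrightarrow{m_1}M\xrightarrow{e_1}B_2\dashrightarrow$, which will be the middle row.

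Because $(1_{A_1})_{*}(y_2^{*}\delta_1)=y_2^{*}\delta_1=y_2^{*}(\delta_1)$, the pair $(1_{A_1},y_2)$ is a morphism of $\mathbb{E}$-extensions $y_2^{*}\delta_1\to\delta_1$, so by the definition of a realization there is a morphism $e_2\colon M\to B_1$ making the square to $\delta_1$ commute; explicitly $e_2 m_1=x_1$ and $y_1 e_2=y_2 e_1$. Together with the given bottom row $\delta_1$ and right column $\delta_2$, this produces the entire lower-right part of the asserted diagram, and what is left is to produce the top map $m_2\colon A_2\to M$ with $e_1 m_2=x_2$ and to show that $A_2\xrightarrow{m_2}M\xrightarrow{e_2}B_1$ is an $\mathbb{E}$-triangle realizing $y_1^{*}\delta_2$.

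The computational input I would use is that an extension pulled back along its own deflation splits: the long exact sequences attached to $\delta_1$ and $\delta_2$ give $y_1^{*}\delta_1=0$ and $y_2^{*}\delta_2=0$. Since $y_2 x_2=0$, we get $x_2^{*}(y_2^{*}\delta_1)=(y_2 x_2)^{*}\delta_1=0$, so the long exact sequence of the middle row lets $x_2$ factor as $x_2=e_1 m_2$, yielding $m_2$. Dually, realizing $y_1^{*}\delta_2$ as an $\mathbb{E}$-triangle $A_2\to M'\to B_1\dashrightarrow$ and using the two splittings to factor the cross-maps $e_2$ and the induced $M'\to B_2$ through the deflations $M'\to B_1$ and $e_1$, one obtains comparison morphisms $M\rightleftarrows M'$. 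Checking, via Lemma \ref{FJ}, that these are mutually inverse then transports the triangle on $M'$ to the required $\mathbb{E}$-triangle on $M$ and identifies its class as $y_1^{*}\delta_2$.

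I expect the last point to be the main obstacle: an extriangulated category has no genuine universal pullback, so one cannot conclude for free that $e_2$ is a deflation with kernel $A_2$ and realized class exactly $y_1^{*}\delta_2$. This has to be forced from the octahedral axiom $\mathrm{(ET4)^{op}}$ (equivalently, the dual of the octahedron lemma recalled above) applied to the composable deflations constructed above, tracking the extension classes through its compatibility conditions and using Lemma \ref{FJ} to control the non-uniqueness of the lift $m_2$. Once (1) is proved with both realized classes $y_2^{*}\delta_1$ and $y_1^{*}\delta_2$ identified, statement (2) follows by duality.
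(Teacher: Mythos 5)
First, a remark on the comparison you were asked to make: the paper contains no proof of this lemma at all --- it is imported verbatim from Nakaoka--Palu \citep[Corollary 3.15]{HY} --- so the only meaningful benchmark is the proof in that source. Your opening agrees with it: realizing $y_2^{*}\delta_1$ to obtain the middle row $A_1\stackrel{m_1}{\to}M\stackrel{e_1}{\to}B_2$, producing $e_2$ from the morphism of extensions $(1_{A_1},y_2):y_2^{*}\delta_1\to\delta_1$, obtaining $m_2$ from $x_2^{*}(y_2^{*}\delta_1)=(y_2x_2)^{*}\delta_1=0$ via the long exact sequence, and deducing (2) from (1) by passing to $(\mathcal{C}^{op},\mathbb{E}^{op},\mathfrak{s}^{op})$ are all correct steps.

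The gap is exactly where you predicted it, and the fallback you actually spell out would not close it. In the comparison-morphism route, after correcting the lifts one can arrange $\phi:M\to M'$ and $\psi:M'\to M$ compatible with both deflations on each side, but then one only learns that $\psi\phi-1_M$ factors through the inflation $m_1$ (and that the resulting $u:M\to A_1$ satisfies $x_1u=0$). In an extriangulated category inflations need not be monomorphisms, so this does not force $\psi\phi=1_M$; Lemma \ref{FJ} converts ``factors through $x$'' into the vanishing of a pushforward class, which is not the statement you need. Hence you cannot conclude $M\simeq M'$ this way, let alone that the specific pair $(m_2,e_2)$ you already constructed is a conflation with class $y_1^{*}\delta_2$. (Note also that $e_2m_2=0$ is not automatic from your construction: a priori $e_2m_2$ only factors through $x_1$, so $m_2$ must first be corrected by a morphism factoring through $m_1$.) The argument that actually works --- and the one in \citep[Corollary 3.15]{HY} --- applies $\mathrm{(ET4)^{op}}$ to the composable deflations $M\stackrel{e_1}{\to}B_2\stackrel{y_2}{\to}C$, uses $(y_2x_2)^{*}\delta_1=0$ to split the induced conflation of kernels $A_1\to N\to A_2$, and then extracts the conflation $A_2\to M\stackrel{e_2}{\to}B_1$ together with the identification of its class as $y_1^{*}\delta_2$ from the compatibility conditions of the axiom. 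You name the right tool and the right pair of deflations, but the proposal stops at naming them; the class-tracking that constitutes the actual content of the lemma is not carried out.
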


Now we are in the position to introduce the concept for the proper classes of $\mathbb{E}$-triangles following \cite{JDP}. In the following part of this section, we always assume that $(\mathcal{C},\mathbb{E},\mathfrak{s})$ is an extriangulated categroy.
\begin{defn}
	Let $\xi$ be a class of $\mathbb{E}$-triangles. One says $\xi$ is \emph{closed under base change} if for any $\mathbb{E}$-triangle
	$$
	\xymatrix{
		A\ar[r]^x&B\ar[r]^y&C\ar@{-->}[r]^{\delta}&
	}
	\in \xi
	$$
	\noindent and any morphism $c:C'\rightarrow C$, then any $\mathbb{E}$-triangle$\xymatrix{A\ar[r]^{x'}&B'\ar[r]^{y'}&C'\ar@{-->}[r]^{c^*\delta}&}$ belongs to $\xi$.
	
	Dually, one says $\xi$ is \emph{closed under cobase change} if for any $\mathbb{E}$-triangle
	$$
	\xymatrix{
		A\ar[r]^x&B\ar[r]^y&C\ar@{-->}[r]^{\delta}&
	}
	\in \xi
	$$
	\noindent and any morphism $a:A\rightarrow A'$, then any $\mathbb{E}$-triangle$\xymatrix{A'\ar[r]^{x'}&B'\ar[r]^{y'}&C\ar@{-->}[r]^{a_*\delta}&}$ belongs to $\xi$.
	
\end{defn}

\begin{defn}
	A class of $\mathbb{E}$-triangles $\xi$ is called \emph{saturated} if in the situation of Lemma \ref{BH}(1), when $\xymatrix{A_2\ar[r]^{x_2}&B_2\ar[r]^{y_2}&C\ar@{-->}[r]^{\delta_2}&}$ and $\xymatrix{A_1\ar[r]^{m_1}&M\ar[r]^{m_1}&B_2\ar@{-->}[r]^{y_2^*\delta_1}&}$ belong to $\xi$, then the
	$\mathbb{E}$-triangle $\xymatrix{A_1\ar[r]^{x_1}&B_1\ar[r]^{y_1}&C\ar@{-->}[r]^{\delta_1}&}$ belongs to  $\xi$.
\end{defn}

We denote the full subcategory consisting of the split $\mathbb{E}$-triangle by $\Delta_0$.
\begin{defn}\label{ZL}\citep[Definition 3.1]{JDP}
	Let $\xi$ be a class of $\mathbb{E}$-triangles which is closed under isomorphisms. $\xi$ is called a \emph{proper class } of $\mathbb{E}$-triangles if the following conditions holds:
	
	(1) $\xi$ is closed under finite coproducts and $\Delta_0 \subseteq \xi$.
	
	(2) $\xi$ is closed under base change and cobase change.
	
	(3) $\xi$ is saturated.
\end{defn}

\begin{defn}\citep[Definition 3.4]{JDP}
	Let $\xi$ be a proper class of $\mathbb{E}$-triangles. A morphism $x$ is called \emph{$\xi$-inflation} if there exists an $\mathbb{E}$-triangle $$\xymatrix{A\ar[r]^x&B\ar[r]^y&C\ar@{-->}[r]^{\delta}&} \in\xi.$$
	Dually, A morphism $y$ is called \emph{$\xi$-deflation} if there exists an $\mathbb{E}$-triangle
	$$\xymatrix{A\ar[r]^x&B\ar[r]^y&C\ar@{-->}[r]^{\delta}&}\in\xi.$$
	
\end{defn}

\section{$\xi$-$\mathcal{G}$projective objects}

\quad~Throughout  this section, we assume that $\xi$ is a  proper class of $\mathbb{E}$-triangles in an extriangulated category $(\mathcal{C},\mathbb{E},\mathfrak{s})$.
\begin{defn}\citep[Definition 4.1]{JDP} An object $P\in \mathcal{C}$ is called $\xi$-\emph{projective} if for any $\mathbb{E}$-triangle
	
	$$
	\xymatrix{
		A\ar[r]^{x}&B\ar[r]^{y}&C\ar@{-->}[r]^{\delta}&
	}
	$$
	in $\xi$, the induced sequence of abelian groups
	$$
	0\longrightarrow\mathcal{C}(P,A)\longrightarrow \mathcal{C}(P,B)\longrightarrow\mathcal{C}(P,C)\longrightarrow 0
	$$
	is exact. We denote by $\mathcal{P}(\xi)$ the subcategory of $\xi$-projective objects in $\mathcal{C}$.
\end{defn}
\begin{rem}\label{REM1}
	(1) $\mathcal{P}(\xi)$ is a full, additive, closed under isomorphism, direct sum and direct summands.
	
	(2) For any $\mathbb{E}$-triangle $\xymatrix{A\ar[r]^{x}&B\ar[r]^{y}&P\ar@{-->}[r]^{\delta}&}$ in $\xi$ with $P\in \mathcal{P}(\xi)$ is split. That means $B\simeq A\oplus P$.
\end{rem}
\begin{proof}
	
	(1) It can be obtained directly from the definition.
	
	(2) Applying functor $\mathcal{C}(P,-)$ to the above $\mathbb{E}$-triangle, we get the exact sequence
	$$
	0\longrightarrow\mathcal{C}(P,A)\stackrel{\mathcal{C}(P,x)}\longrightarrow \mathcal{C}(P,B)\stackrel{\mathcal{C}(P,y)}\longrightarrow\mathcal{C}(P,P)\longrightarrow0
	$$
	since $P\in \mathcal{P}(\xi)$. This implies that $y$ is a retraction. Then $\xymatrix{A\ar[r]^{x}&B\ar[r]^{y}&P\ar@{-->}[r]^{\delta}&}$ is split by Lemma \ref{FJ}.
\end{proof}

An extriangulated category $(\mathcal{C},\mathbb{E},\mathfrak{s})$ is said to have \emph{enough $\xi$-projectives } provided that for each object $A$ there exists an $
\mathbb{E}$-triangle $K\longrightarrow P\longrightarrow A\dashrightarrow$ in $\xi$ with $P\in\mathcal{P}(\xi)$.

The following lemma is used frequently in this thesis.

\begin{lem}\label{FBZH1}\citep[Lemma 4.2]{JDP}
	If $\mathcal{C}$ has enough $\xi$-projectives, then an $\mathbb{E}$-triangle $ A\longrightarrow B\longrightarrow C\dashrightarrow$ in $\xi$ if and only if induced sequence of abelian groups
	$$
	0\longrightarrow\mathcal{C}(P,A)\longrightarrow \mathcal{C}(P,B)\longrightarrow\mathcal{C}(P,C)\longrightarrow0
	$$
	is exact for all $P\in\mathcal{P}(\xi)$.
\end{lem}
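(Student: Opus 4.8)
The plan is to prove the two implications separately; the forward direction is immediate from the definitions, while the reverse direction carries all the content and will rest on the saturation axiom together with the long exact sequence of Corollary 3.12.

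For the forward direction, suppose the $\mathbb{E}$-triangle $A\to B\to C$ lies in $\xi$. Then the exactness of $0\to\mathcal{C}(P,A)\to\mathcal{C}(P,B)\to\mathcal{C}(P,C)\to0$ for every $P\in\mathcal{P}(\xi)$ is precisely the defining property of $\xi$-projective objects, so there is nothing further to check.

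For the reverse direction, I begin with an arbitrary $\mathbb{E}$-triangle $A\xrightarrow{x}B\xrightarrow{y}C$ with extension $\delta\in\mathbb{E}(C,A)$ for which the displayed sequence is exact for all $P\in\mathcal{P}(\xi)$, and I aim to show that this triangle belongs to $\xi$. Since $\mathcal{C}$ has enough $\xi$-projectives, I fix an $\mathbb{E}$-triangle $K\xrightarrow{g}P'\xrightarrow{p}C$ with extension $\eta$, lying in $\xi$ and with $P'\in\mathcal{P}(\xi)$. By hypothesis $\mathcal{C}(P',y)$ is surjective, so $p$ lifts through $y$: there is $q\in\mathcal{C}(P',B)$ with $yq=p$. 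Applying Lemma \ref{BH}(1) to these two triangles over the common object $C$ produces an object $M$ together with an $\mathbb{E}$-triangle $A\xrightarrow{m_1}M\xrightarrow{e_1}P'$ realizing $p^*\delta$. The crucial point is that $p^*\delta=0$: evaluating the contravariant long exact sequence attached to $\delta$ (Corollary 3.12) at $P'$, exactness at $\mathcal{C}(P',C)$ gives $\ker(\delta_{\sharp})_{P'}=\operatorname{im}\mathcal{C}(P',y)$, and since $p=yq$ lies in that image we obtain $(\delta_{\sharp})_{P'}(p)=p^*\delta=0$. Consequently $A\xrightarrow{m_1}M\xrightarrow{e_1}P'$ is a split $\mathbb{E}$-triangle, hence lies in $\Delta_0\subseteq\xi$ by Definition \ref{ZL}.

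It then remains to invoke saturation. With the labelling $A_1=A$, $B_1=B$, $A_2=K$, $B_2=P'$, $y_2=p$, $\delta_1=\delta$, $\delta_2=\eta$ of Lemma \ref{BH}(1), one has $y_2^*\delta_1=p^*\delta$, and both relevant triangles belong to $\xi$: the triangle $K\xrightarrow{g}P'\xrightarrow{p}C$ is $\eta\in\xi$, and the triangle $A\xrightarrow{m_1}M\xrightarrow{e_1}P'$ realizing $y_2^*\delta_1$ is the split one just identified. Saturation therefore forces $A\xrightarrow{x}B\xrightarrow{y}C$ into $\xi$, as desired. I expect the main obstacle to be the verification that $p^*\delta=0$ and, correspondingly, the careful matching of the data produced by Lemma \ref{BH}(1) with the hypotheses of the saturation axiom; once the lift $q$ is in hand and the vanishing of $p^*\delta$ is established, the remainder is routine bookkeeping.
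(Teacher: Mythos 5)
Your proof is correct. The paper itself gives no argument for this lemma (it simply cites \citep[Lemma 4.2]{JDP}), and your reverse direction — lifting the deflation $p$ of a $\xi$-projective presentation of $C$ through $y$ to conclude $p^*\delta=0$ from the long exact sequence, then feeding the resulting split triangle $A\to M\to P'$ and the presentation triangle into the saturation axiom — is exactly the standard argument used in that reference.
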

The \emph{$\xi$-projective dimension} $\xi$-pd$A$ of an object $A$ is defined inductively. When $A=0$, put $\xi$-pd$A=-1$.  If $A\in\mathcal{P}(\xi)$, then define $\xi$-pd$A=0$. Next by induction, for an integer $n>0$, put $\xi$-pd$A\leqslant n$ if there exists an $\mathbb{E}$-triangle $K\rightarrow P\rightarrow A\dashrightarrow$ in $\xi$ with $P\in \mathcal{P}(\xi)$ and $\xi$-pd$K\leqslant n-1$.

We define $\xi$-pd$A=n$ if $\xi$-pd$A\leqslant n$ and $\xi$-pd$A \nleqslant  n-1$. If $\xi$-pd$A\neq n$, for all $n\geqslant0$, we set  $\xi$-pd$A=\infty$.

\begin{defn}\citep[Definition 4.4]{JDP}
	An complex $\mathbf{X}$ is called \emph{$\xi$-exact} if $\mathbf{X}$ is a diagram
	$$\xymatrix{
		\cdots\ar[r] &X_1\ar[r]^{d_1}&X_0 \ar[r]^{d_0}&X_{-1}\ar[r]&\cdots
	}
	$$
	in $\mathcal{C}$ such that for each integer $n$, there exists an $\mathbb{E}$-triangle $K_{n+1}\stackrel{g_n}\longrightarrow X_n\stackrel{f_n}\longrightarrow C\stackrel{\delta_n}\dashrightarrow$ in $\xi$ and $d_n=g_{n-1}f_n$. These $\mathbb{E}$-triangles are called the\emph{ resolution $\mathbb{E}$-triangles }of the $\xi$-exact complex $\mathbf{X}$.
\end{defn}

\begin{prop}[Schanuel's Lemma]
	For any integer $n\geqslant 0$, if there are two $\xi$-exact complexes in $\Mcc$ as follows
	$$\xymatrix{
		\mathbf{P}:0\ar[r]&K_n\ar[r]^{f_n}&P_{n-1}\ar[r]^{f_{n-1}}&{\cdots}\ar[r]^{f_2}&P_{1}\ar[r]^{f_1}&P_{0}\ar[r]^{f_0}&A\ar[r]&0,\\
		\mathbf{P'}:0\ar[r]&K'_n\ar[r]^{f'_n}&P'_{n-1}\ar[r]^{f'_{n-1}}&{\cdots}\ar[r]^{f'_2}&P'_{1}\ar[r]^{f'_1}&P'_{0}\ar[r]^{f'_0}&A\ar[r]&0}
	$$
	with $P_i$ and $P'_i$ in $\Mcp(\xi)$ for any $0\leqslant i \leqslant n-1$. Then we have
	$$ K_n\oplus P'_{n-1}\oplus P_{n-2}\oplus P'_{n-3}\oplus \cdots \oplus H \simeq K'_n\oplus P_{n-1}\oplus P'_{n-2}\oplus P_{n-3}\oplus \cdots \oplus H'.$$
	Precisely, if $n$ is even, then $H=P_0$, $H'=P'_0$ and if $n$ is odd, then $H=P'_0$, $H'=P_0$.
\end{prop}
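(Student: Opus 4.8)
The plan is to induct on $n$, taking the classical Schanuel isomorphism ($n=1$) as the base case and using a syzygy-shifting argument for the inductive step.

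\emph{Base case $n=1$.} Here the two $\xi$-exact complexes amount to two $\mathbb{E}$-triangles in $\xi$ sharing the common end term $A$, say $K_1\to P_0\xrightarrow{f_0}A$ realizing $\delta_1$ and $K'_1\to P'_0\xrightarrow{f'_0}A$ realizing $\delta_2$, and the claim reduces to $K_1\oplus P'_0\simeq K'_1\oplus P_0$. I would feed these two triangles into Lemma \ref{BH}(1): this produces an object $M$ together with $\mathbb{E}$-triangles $K_1\to M\to P'_0$ realizing $(f'_0)^{*}\delta_1$ and $K'_1\to M\to P_0$ realizing $(f_0)^{*}\delta_2$. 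Since $\xi$ is closed under base change (Definition \ref{ZL}), both triangles again lie in $\xi$; and since $P'_0,P_0\in\mathcal{P}(\xi)$, Remark \ref{REM1}(2) forces each to split, whence $K_1\oplus P'_0\simeq M\simeq K'_1\oplus P_0$.

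\emph{Inductive step.} Assume the statement for length $n-1$ and decompose the $\xi$-exact complex $\mathbf P$ into its resolution $\mathbb{E}$-triangles. This isolates a first triangle $K_1\to P_0\to A$ in $\xi$ together with a length-$(n-1)$ $\xi$-exact resolution $0\to K_n\to P_{n-1}\to\cdots\to P_1\to K_1\to 0$ of the first syzygy $K_1$; likewise $\mathbf P'$ yields $K'_1\to P'_0\to A$ and a resolution of $K'_1$. Applying the already-proved base case to the two first triangles gives $K_1\oplus P'_0\simeq K'_1\oplus P_0$. To invoke the inductive hypothesis I must exhibit two resolutions of one and the same object, so I would add the split triangle $0\to P'_0\xrightarrow{1}P'_0$ (which lies in $\xi$ because $\Delta_0\subseteq\xi$ and $\xi$ is closed under finite coproducts) to the last resolution triangle $K_2\to P_1\to K_1$ of $K_1$. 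This upgrades it to $K_2\to P_1\oplus P'_0\to K_1\oplus P'_0$ with $P_1\oplus P'_0\in\mathcal{P}(\xi)$, producing a length-$(n-1)$ resolution $0\to K_n\to P_{n-1}\to\cdots\to P_2\to P_1\oplus P'_0\to K_1\oplus P'_0\to 0$; the symmetric move on $\mathbf P'$ yields $0\to K'_n\to P'_{n-1}\to\cdots\to P'_2\to P'_1\oplus P_0\to K'_1\oplus P_0\to 0$. As $K_1\oplus P'_0\simeq K'_1\oplus P_0$ and $\xi$ is closed under isomorphism, these two resolutions resolve the same object, so the inductive hypothesis applies and returns the desired isomorphism once the terminal summands $P_1\oplus P'_0$ and $P'_1\oplus P_0$ are regrouped.

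The main obstacle is not conceptual but bookkeeping: one has to verify that the alternating string of primed and unprimed projectives emerging from the length-$(n-1)$ isomorphism, after the extra summands $P'_0$ and $P_0$ are absorbed into $P_1\oplus P'_0$ and $P'_1\oplus P_0$, reproduces exactly the stated pattern, with the terminal terms $H,H'$ switching between $P_0$ and $P'_0$ according to the parity of $n$. I would settle this by treating the cases $n$ even and $n$ odd separately, where the parity of $n-1$ flips the roles of $H$ and $H'$ supplied by the inductive hypothesis and makes the two sides match; the degenerate case $n=0$, which forces $K_0\simeq A\simeq K'_0$, is immediate.
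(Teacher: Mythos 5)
Your argument is correct and follows essentially the same route as the paper: both reduce to the one-step Schanuel isomorphism $K_1\oplus P'_0\simeq K'_1\oplus P_0$ and then shift to length-$(n-1)$ resolutions of this common object by absorbing $P'_0$ into $P_1$ and $P_0$ into $P'_1$ via a split summand, closing the induction. The only difference is that you prove the base case directly from Lemma \ref{BH}(1) together with Remark \ref{REM1}(2), whereas the paper simply cites it as \citep[Proposition 4.3]{JDP}.
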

\begin{proof}
	If $n=0$, it is obviously true form \citep[Proposition 4.3]{JDP}. Assume that this conclusion is true when $n=k-1$, then we consider the situation with $n=k$.
	
	There are fours $\Mbe$-triangles in $\xi$ since $\mathbf{P}$ and $\mathbf{P'}$ are $\xi$-exact complexes.
	\begin{gather*}
	\xymatrix{K_1\ar[r]^{x_1}&P_0\ar[r]^{y_0}&A\ar@{-->}[r]&}, \xymatrix{K_2\ar[r]^{x_2}&P_1\ar[r]^{y_1}&K_1\ar@{-->}[r]&}\\
	\xymatrix{K'_1\ar[r]^{x'_1}&P'_0\ar[r]^{y'_0}&A\ar@{-->}[r]&}, \xymatrix{K'_2\ar[r]^{x'_2}&P'_1\ar[r]^{y'_1}&K'_1\ar@{-->}[r]&}
	\end{gather*}
	Then we have $K_1\oplus P'_0 \simeq K'_1\oplus P_0$, so we get two sequences as follows
	$$\xymatrix{
		\mathbf{\hat{P}}:0\ar[r]&K_n\ar[r]^{f_n}&P_{n-1}\ar[r]^{f_{n-1}}&{\cdots}\ar[r]^{f_3}&P_{2}\ar[r]^{f_2}&{P_1\oplus P'_0}\ar[r]^{\alpha}&{K_1\oplus P'_0}\ar[r]&0,\\
		\mathbf{\hat{P}'}:0\ar[r]&K'_n\ar[r]^{f'_n}&P'_{n-1}\ar[r]^{f'_{n-1}}&{\cdots}\ar[r]^{f'_3}&P'_{2}\ar[r]^{f'_2}&{P'_1\oplus P_{0}}\ar[r]^{\alpha'}&{K'_1\oplus P_0}\ar[r]&0}
	$$
	where $f=\begin{tiny}\begin{bmatrix}f_2 \\0\end{bmatrix}\end{tiny}, \alpha=\begin{tiny}\begin{bmatrix}y_1 \\1\end{bmatrix}\end{tiny}, f'=\begin{tiny}\begin{bmatrix}f'_2 \\0\end{bmatrix}\end{tiny}, \alpha'=\begin{tiny}\begin{bmatrix}y'_1 \\1\end{bmatrix}\end{tiny}$.
	
	It is easy to see that $\mathbf{\hat{P}}$ and $\mathbf{\hat{P}'}$ are $\xi$-exact complexes in $\Mcc$. Then we have following isomorphism
	$$ K_n\oplus P'_{n-1}\oplus P_{n-2}\oplus P'_{n-3}\oplus \cdots \oplus H \simeq K'_n\oplus P_{n-1}\oplus P'_{n-2}\oplus P_{n-3}\oplus \cdots \oplus H'$$
	by hypothesis, which is desired.
\end{proof}

If $\xymatrix{ K\ar[r]&P\ar[r]&C\ar@{-->}[r]&}$ is an $\mathbb{E}$-triangle in $\xi$ with $P\in\P$, then we call the object $K$ a \emph{first $\xi$-syzygy} of $C$. An \emph{$n$th $\xi$-syzygy} of $C$ is defined as usual by induction. 
\begin{cor}\label{Sch}
	Any two $n$th $\xi$-syzygy of any object $C\in \Mcc$ are $\xi$-projectively equivalent for any $n\geqslant1$.
\end{cor}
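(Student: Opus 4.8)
The plan is to read off the result directly from the preceding Proposition (Schanuel's Lemma), once we observe that any $n$th $\xi$-syzygy of $C$ occupies the left-hand end of a truncated $\xi$-exact complex built from $\xi$-projectives. Recall that two objects $K$ and $K'$ are \emph{$\xi$-projectively equivalent} when $K\oplus Q\simeq K'\oplus Q'$ for some $Q,Q'\in\mathcal{P}(\xi)$, so this is exactly the shape of conclusion the Proposition delivers.

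First I would unwind the inductive definition of an $n$th $\xi$-syzygy. If $K_n$ is an $n$th $\xi$-syzygy of $C$, then by construction there is a chain of $\mathbb{E}$-triangles in $\xi$,
$$\xymatrix{K_1\ar[r]&P_0\ar[r]&C\ar@{-->}[r]&},\ \xymatrix{K_2\ar[r]&P_1\ar[r]&K_1\ar@{-->}[r]&},\ \ldots,\ \xymatrix{K_n\ar[r]&P_{n-1}\ar[r]&K_{n-1}\ar@{-->}[r]&},$$
with every $P_i\in\mathcal{P}(\xi)$. Splicing these together, that is, composing each deflation with the following inflation exactly as in the definition of a $\xi$-exact complex, produces a $\xi$-exact complex
$$\xymatrix{0\ar[r]&K_n\ar[r]&P_{n-1}\ar[r]&\cdots\ar[r]&P_0\ar[r]&C\ar[r]&0}$$
with all $P_i\in\mathcal{P}(\xi)$. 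Repeating this for a second $n$th $\xi$-syzygy $K'_n$ gives an analogous $\xi$-exact complex with terms $P'_i$ and left-hand end $K'_n$, both complexes resolving the same object $C$.

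Next I would apply the Proposition to this pair of complexes (taking $A=C$). It produces an isomorphism
$$K_n\oplus P'_{n-1}\oplus P_{n-2}\oplus\cdots\oplus H\simeq K'_n\oplus P_{n-1}\oplus P'_{n-2}\oplus\cdots\oplus H',$$
in which every summand other than $K_n$ (respectively $K'_n$) belongs to $\mathcal{P}(\xi)$, the objects $H,H'$ being $P_0$ or $P'_0$ according to the parity of $n$. Since $\mathcal{P}(\xi)$ is closed under finite direct sums by Remark \ref{REM1}(1), the two bundles of projective summands are themselves objects $Q,Q'\in\mathcal{P}(\xi)$, and the isomorphism becomes $K_n\oplus Q\simeq K'_n\oplus Q'$. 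This is precisely the assertion that $K_n$ and $K'_n$ are $\xi$-projectively equivalent.

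I do not expect a genuine obstacle, as Schanuel's Lemma carries the whole argument; the only points needing care are the bookkeeping that turns the inductively defined syzygies into the two truncated $\xi$-exact complexes required by the Proposition, and the appeal to closure of $\mathcal{P}(\xi)$ under finite direct sums to collect the leftover terms into single $\xi$-projective objects.
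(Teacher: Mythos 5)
Your proof is correct and follows exactly the route the paper intends: the paper's own proof is simply ``By Schanuel's Lemma, it is obvious,'' and your write-up fills in precisely the bookkeeping (splicing the resolution $\mathbb{E}$-triangles into two truncated $\xi$-exact complexes and collecting the leftover $\xi$-projective summands) that the paper leaves implicit.
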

\begin{proof}
	By Schanuel's Lemma, it is obvious.
\end{proof}
\begin{defn}\citep[Definition 4.5, 4.6]{JDP}
	Let $\mathcal{W}$ be a class of objects in $\Mcc$. An $\mathbb{E}$-triangle $A\longrightarrow B\longrightarrow C\dashrightarrow$ in $\xi$ is called to be $\Mcc(-,\mathcal{W})$\emph{-exact} (respectively $\Mcc(\mathcal{W},-)$\emph{-exact}) if for any $W\in\mathcal{W}$, the induced sequence of abelian group $0 \rightarrow \Mcc(C,W)\rightarrow\Mcc(B,W)\rightarrow\Mcc(A,W)\rightarrow0$ (respectively $0 \rightarrow \Mcc(W,A)\rightarrow\Mcc(W,B)\rightarrow\Mcc(W,C)\rightarrow0$) is exact in $\mathbf{Ab}$.

	A complex $\mathbf{X}$ is called $\Mcc(-,\mathcal{W})$-\emph{exact}  ( respectively $\Mcc(\mathcal{W},-)$-\emph{exact} ) if it is a $\xi$-exact complex with $\Mcc(-,\mathcal{W})$-exact resolution $\mathbb{E}$-triangles ( respectively $\Mcc(\mathcal{W},-)$-exact resolution $\mathbb{E}$-triangles ).
	
	A $\xi$-exact complex $\mathbf{X}$ is called \emph{complete $\Mcp(\xi)$-exact} if it is $\Mcc(-,\Mcp(\xi))$-exact.
\end{defn}
\begin{defn}
	An \emph{$\xi$-projective resolution} of an object $A\in\Mcc$ is a $\xi$-exact complex
	\[
	\xymatrix{
		\cdots\ar[r]&P_{n}\ar[r]&P_{n-1}\ar[r]&{\cdots}\ar[r]&P_{1}\ar[r]&P_{0}\ar[r]&A\ar[r]&0
	}
	\]
	in $\Mcc$ with $P_{n}\in\P$ for all $n\geqslant0$.
\end{defn}
\begin{defn}\citep[Definition 4.7, 4.8]{JDP}
	A \emph{complete $\xi$-projective resolution} is a complete $\Mcp(\xi)$-exact complex
	$$
	\mathbf{P}: \xymatrix{ \cdots\ar[r]&P_1\ar[r]^{d_1}&P_{0}\ar[r]^{d_0}&P_{-1}\ar[r]&{\cdots}}
	$$
	in $\Mcc$ such that $P_n$ is projective for each integer $n$ . And for any $P_n$, there exists a $\Mcc(-,\Mcp(\xi))$-exact $\mathbb{E}$-triangle $\xymatrix{ K_{n+1}\ar[r]^{g_n}&P_n\ar[r]^{f_n}&K_n\ar@{-->}[r]^{\delta_n}&}$ in $\xi$ which is the resolution $\mathbb{E}$-triangle of $\mathbf{P}$. Then the objects $K_n$ are called \emph{$\xi$-$\Mcg$projective} for each integer $n$. We denote by $\Mcg\Mcp(\xi)$ the subcategory of $\xi$-$\Mcg$projective objects in $\Mcc$.
\end{defn}

Next, we will introduce some fundamental properties of  $\Gproj$ objects. We always assume that the extriangulated category $\extri$ has enough $\xi$-projectives  and satisfies Condition (WIC) for the rest part of this section.

\begin{cond}[Condition (WIC)]\label{WIC}
	Consider the following conditions.
	
	(1) Let $f\in\Mcc(A,B),g\in\Mcc(B,C)$ be any composable  pair of morphisms. If $gf$ is an inflation, then so is $f$.
	
	(2) Let $f\in\Mcc(A,B),g\in\Mcc(B,C)$ be any composable  pair of morphisms. If $gf$ is a deflation, then so is $g$.
\end{cond}

\begin{exam}
	(1) If $\Mcc$ is an exact category, then Condition (WIC) is equivalent to $\Mcc$ is weakly idempotent complete (see \citep[Proposition 7.6]{TB}).
	
	(2) If $\Mcc$ is a triangulated category, then Condition (WIC) is automaticlly satisfied.
\end{exam}

\begin{prop}\label{INDE}\citep[Proposition 4.13]{JDP}
	Let $f\in\Mcc(A,B),g\in\Mcc(B,C)$ be any composable  pair of morphisms. We have that
	
	(1) if $gf$ is a $\xi$-inflation, then so is $f$.
	
	(2) if $gf$ is a $\xi$-deflation, then so is $g$.
\end{prop}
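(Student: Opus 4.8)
The plan is to establish (1) and (2) by dual arguments, so I will lay out (1) in detail and indicate the dualization for (2). The conceptual heart of the matter is that being a \emph{$\xi$-inflation} is strictly stronger than being an inflation: it requires the witnessing $\mathbb{E}$-triangle to lie in the proper class $\xi$. Condition (WIC) alone only produces an inflation in the ambient extriangulated category, so the genuine task is to promote that inflation to a $\xi$-inflation by exhibiting an associated $\mathbb{E}$-triangle inside $\xi$.

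First I would unpack the hypothesis of (1). Since $gf$ is a $\xi$-inflation, there is an $\mathbb{E}$-triangle $A\xrightarrow{gf}C\xrightarrow{w}D\dashrightarrow$ in $\xi$ with some $\mathbb{E}$-extension $\delta\in\mathbb{E}(D,A)$; in particular $gf$ is an inflation. By Condition \ref{WIC}(1), $f$ is then an inflation, so I may fix an $\mathbb{E}$-triangle $A\xrightarrow{f}B\xrightarrow{p}E\dashrightarrow$ realizing some $\theta\in\mathbb{E}(E,A)$, which is a priori not known to lie in $\xi$. The goal is thereby reduced to showing that this triangle belongs to $\xi$.

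Next I would use the factorization $gf=g\circ f$ to build a comparison of $\mathbb{E}$-triangles. The square with horizontal inflations $f$ and $gf$ and vertical maps $1_A$ and $g$ commutes, so (ET3) applied to $\theta$ and $\delta$ yields a morphism $c\colon E\to D$ such that $(1_A,g,c)$ realizes a morphism $(1_A,c)\colon\theta\to\delta$ of $\mathbb{E}$-triangles; the defining relation of such a morphism gives $\theta=(1_A)_*\theta=c^*\delta$. Since $A\xrightarrow{gf}C\to D\dashrightarrow$ lies in $\xi$ and $\xi$ is closed under base change, every $\mathbb{E}$-triangle realizing $c^*\delta$ lies in $\xi$; as the triangle $A\xrightarrow{f}B\to E\dashrightarrow$ realizes $\theta=c^*\delta$, it lies in $\xi$, and $f$ is a $\xi$-inflation.

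For (2) the identical scheme runs with (ET3)$^{\mathrm{op}}$ replacing (ET3) and closure under cobase change replacing closure under base change: from $gf$ being a $\xi$-deflation one gets an $\mathbb{E}$-triangle $K\xrightarrow{u}A\xrightarrow{gf}C\dashrightarrow$ in $\xi$ with extension $\delta$, Condition \ref{WIC}(2) makes $g$ a deflation with triangle $L\xrightarrow{v}B\xrightarrow{g}C\dashrightarrow$ realizing $\theta$, the commutative square on the deflation side (verticals $f$ and $1_C$) lifts via (ET3)$^{\mathrm{op}}$ to a map $a\colon K\to L$ with $\theta=a_*\delta$, and closure under cobase change forces the latter triangle into $\xi$. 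The only real obstacle is the middle step, namely recognizing the new extension $\theta$ as a base change (resp.\ cobase change) of the extension $\delta$ coming from $gf$; once the morphism axiom (ET3)/(ET3)$^{\mathrm{op}}$ supplies the comparison morphism, the closure properties of the proper class finish the proof at once.
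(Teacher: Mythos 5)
Your argument is correct, and it is the standard one: the paper itself gives no proof of this proposition but simply cites \citep[Proposition 4.13]{JDP}, and the proof there proceeds exactly as you do --- Condition (WIC) upgrades $f$ (resp.\ $g$) to an inflation (resp.\ deflation), (ET3) (resp.\ (ET3)$^{\mathrm{op}}$) applied to the commutative square built from the factorization identifies the new extension $\theta$ as $c^{*}\delta$ (resp.\ $a_{*}\delta$), and closure of $\xi$ under base change (resp.\ cobase change) finishes. You have correctly isolated the one genuinely non-formal step, namely recognizing $\theta$ as a (co)base change of $\delta$.
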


\begin{lem}\label{ABC}\citep[Theorem 4.16]{JDP}
	If $\xymatrix{A\ar[r]^{x}&B\ar[r]^y&C\ar@{-->}[r]^{\delta}&}$ is an $\Mbe$-triangle in $\xi$ with $C\in\GP$, then $A\in \GP$ if and only if $B\in\GP$.
\end{lem}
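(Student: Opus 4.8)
The statement is a \emph{two-out-of-three} property for the class $\GP$ along an $\mathbb{E}$-triangle whose third term already lies in $\GP$. The plan is to reduce both implications to two structural features of $\GP$: that it is \emph{closed under extensions} (if $A'\rightarrow B'\rightarrow C'\dashrightarrow$ lies in $\xi$ with $A',C'\in\GP$, then $B'\in\GP$) and that it is \emph{closed under direct summands}; together with the immediate observation that a syzygy of a $\xi$-$\mathcal{G}$projective object is again $\xi$-$\mathcal{G}$projective. The last fact I would read straight off the definition: if $C\in\GP$, then $C$ is a cycle of some complete $\xi$-projective resolution, so the adjacent cycles are cycles of the \emph{same} complete resolution and hence lie in $\GP$; in particular the kernel $K$ in a resolution $\mathbb{E}$-triangle $K\rightarrow Q\rightarrow C\dashrightarrow$ (with $Q\in\P$ and this triangle $\fbzh$) belongs to $\GP$.

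For the implication $A,C\in\GP\Rightarrow B\in\GP$, I would prove precisely the closure of $\GP$ under extensions. Given complete $\xi$-projective resolutions of $A$ and of $C$, the idea is to splice together a complete $\xi$-projective resolution having $B$ as a cycle. First I would build a proper left resolution of $B$ by a horseshoe argument: starting from the first resolution $\mathbb{E}$-triangles $A_1\rightarrow Q^A\rightarrow A\dashrightarrow$ and $C_1\rightarrow Q^C\rightarrow C\dashrightarrow$, the fact that $Q^C\in\P$ and $A\rightarrow B\rightarrow C\dashrightarrow$ lies in $\xi$ makes $\mathcal{C}(Q^C,B)\rightarrow\mathcal{C}(Q^C,C)$ surjective, so $Q^C\rightarrow C$ lifts through $B\rightarrow C$; assembling this with $Q^A\rightarrow A\rightarrow B$ via Lemma \ref{BH} (or $\rm(ET4)$) yields an $\mathbb{E}$-triangle $B_1\rightarrow Q^A\oplus Q^C\rightarrow B\dashrightarrow$ in $\xi$ whose syzygy sits in an extension $A_1\rightarrow B_1\rightarrow C_1\dashrightarrow$. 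Iterating produces a $\fbzh$ complex of objects of $\P$ resolving $B$ on the left. A dual horseshoe, using the $\fbzh$-exactness of the two coresolutions, builds the right half $B\rightarrow Q^0_A\oplus Q^0_C\rightarrow B^1\dashrightarrow$ and its cosyzygies, and splicing the two halves at $B$ gives a complete $\xi$-projective resolution through $B$, so $B\in\GP$.

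For the implication $B,C\in\GP\Rightarrow A\in\GP$, I would use the pullback of Lemma \ref{BH}(1). Take the syzygy $\mathbb{E}$-triangle $K\rightarrow Q\rightarrow C\dashrightarrow$ of $C$ as above ($K\in\GP$, $Q\in\P$) and apply Lemma \ref{BH}(1) to it and to the given $\mathbb{E}$-triangle $A\rightarrow B\rightarrow C\dashrightarrow$, both ending in $C$. This produces an object $M$ lying in two $\mathbb{E}$-triangles $A\rightarrow M\rightarrow Q\dashrightarrow$ and $K\rightarrow M\rightarrow B\dashrightarrow$, each in $\xi$ since $\xi$ is closed under base change. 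The first has $Q\in\P$, so it splits by Remark \ref{REM1}(2) and $M\cong A\oplus Q$. The second exhibits $M$ as an extension of $B\in\GP$ by $K\in\GP$, whence $M\in\GP$ by closure under extensions. Therefore $A$, being a direct summand of $M\in\GP$, lies in $\GP$ by closure under summands.

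The main obstacle is the closure of $\GP$ under extensions, i.e. the horseshoe-and-splice construction: one must produce the resolution $\mathbb{E}$-triangles of the spliced complex (assembling the middle terms $Q^A\oplus Q^C$ and their syzygies through Lemma \ref{BH}/$\rm(ET4)$), and then verify both that the spliced complex remains $\fbzh$ and that each structural map is a genuine $\xi$-inflation or $\xi$-deflation. Here Condition (WIC) (Condition \ref{WIC}) and Proposition \ref{INDE} are exactly what guarantee that the relevant composites $d_n=g_{n-1}f_n$ are again inflations/deflations, so that the definition of a $\xi$-exact complex is met. The secondary, smaller obstacle is the closure of $\GP$ under direct summands, which I would invoke as an already-established basic property of $\GP$ (only the special case $A\oplus Q$ with $Q\in\P$ is needed in the argument above).
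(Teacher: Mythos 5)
The paper offers no proof of this lemma; it is imported verbatim from \citep[Theorem 4.16]{JDP}, so your argument can only be measured against the standard proof in that source, which your outline essentially reproduces. Your first implication ($A,C\in\GP\Rightarrow B\in\GP$, i.e.\ closure of $\GP$ under extensions) is sound: the left half of the splice is exactly Lemma \ref{HS}, and on the right the one ingredient you leave implicit is that lifting $A\to Q^0_A$ over $A\to B$ requires the \emph{given} triangle $A\to B\to C\dashrightarrow$ to be $\fbzh$ --- this is precisely Lemma \ref{FBZH}, using $C\in\GP$, and should be cited. The deferred verification that each spliced resolution triangle $B_{n+1}\to Q_n\to B_n\dashrightarrow$ stays $\fbzh$ reduces, via the long exact sequence of Lemma \ref{LZHL}, to $\ext^1(B_n,P)=0$ for $P\in\P$, which holds because $B_n$ is an extension of two objects of $\GP$; so that part does go through.

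The genuine weak point is the second implication. You close by invoking Proposition \ref{CLOD} (closure of $\GP$ under direct summands), but that is \citep[Theorem 4.17]{JDP}, proved there \emph{after} Theorem 4.16, and the usual proofs of summand-closure for Gorenstein projectives (Holm's swindle and its extriangulated analogues) themselves rely on the resolving property you are trying to establish. As written the argument is therefore circular relative to the only available development of these facts. The gap is repairable in two ways. Either prove directly the special case you isolate --- that $M\cong A\oplus Q\in\GP$ with $Q\in\P$ forces $A\in\GP$ --- by composing the resolution triangles of $M$ with the split triangles $Q\to M\to A\dashrightarrow$ and $A\to M\to Q\dashrightarrow$: the new cycles are, respectively, a split extension $M_1\oplus Q$ and an extension of $M^{-1}$ by $Q$, hence lie in $\GP$ using only extension closure and the elementary finite-direct-sum closure. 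Or avoid summands altogether: the left half of a complete resolution of $A$ is any $\xi$-projective resolution (automatically $\fbzh$ since $\ext^{\geqslant 1}(A,P)=0$ is read off the long exact sequence of the given triangle), and the right half is obtained by applying $\rm(ET4)$ to the composite inflation $A\to B\to Q^0_B$, whose cone is an extension of the first cosyzygy of $B$ by $C$ and hence lies in $\GP$ by your first implication. Either repair completes the proof.
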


The \emph{$\Gproj$ dimension} $\Gpd A$ of an object $A$ is defined inductively. When $A=0$, put $\Gpd A=-1$.  If $A\in\GP$, then define $\Gpd A=0$. Next by induction, for an integer $n>0$, put $\Gpd A\leqslant n$ if there exists an $\mathbb{E}$-triangle $K\rightarrow G\rightarrow A\dashrightarrow$ in $\xi$ with $G\in \GP$ and $\Gpd K\leqslant n-1$.

We define $\Gpd A=n$ if $\Gpd A\leqslant n$ and $\Gpd A \nleqslant  n-1$. If $\Gpd A\neq n$, for all $n\geqslant0$, we set  $\Gpd A=\infty$.

Let $\widehat{\mathcal{GP}}(\xi)$ (respectively $\widehat{\Mcp}(\xi)$) denote the full subcategory of $\Mcc$ whose objects are of finite $\Gproj$ (respectively $\xi$-projective) dimension.
\begin{prop}\label{CLOD}\citep[Theorem 4.17]{JDP}
	$\GP$ is closed under direct sums and direct summands.
\end{prop}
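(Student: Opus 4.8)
The plan is to establish the two closure properties separately: the direct-sum case is routine, while the direct-summand case carries the whole difficulty.

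For direct sums, let $G_1,G_2\in\GP$ and choose complete $\xi$-projective resolutions $\mathbf{P}^1,\mathbf{P}^2$, with resolution $\Mbe$-triangles $K^j_{n+1}\to P^j_n\to K^j_n\dashrightarrow$ in $\xi$ that are $\fbzh$ and satisfy $G_j=K^j_0$. I would form the termwise coproduct $\mathbf{P}^1\oplus\mathbf{P}^2$. Each term $P^1_n\oplus P^2_n$ is $\xi$-projective by Remark \ref{REM1}(1); each resolution $\Mbe$-triangle is the coproduct of two members of $\xi$, hence lies in $\xi$ because $\xi$ is closed under finite coproducts (Definition \ref{ZL}(1)); and each remains $\fbzh$ because a finite direct sum of short exact sequences of abelian groups is short exact and $\Mcc(X\oplus Y,W)\cong\Mcc(X,W)\oplus\Mcc(Y,W)$. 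Thus $\mathbf{P}^1\oplus\mathbf{P}^2$ is a complete $\xi$-projective resolution with cycle $G_1\oplus G_2$, so $G_1\oplus G_2\in\GP$.

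For direct summands, let $M=G\oplus G'\in\GP$; the aim is to produce a complete $\xi$-projective resolution having $G$ as one of its cycles. The cleanest classical device is the Eilenberg swindle: were the countable coproduct $N=\bigoplus_{i\geqslant1}M$ available in $\GP$, one would have $G\oplus N\cong N\in\GP$, and applying the ``$2$-out-of-$3$'' Lemma \ref{ABC} to the split $\Mbe$-triangle $G\to G\oplus N\to N\dashrightarrow$ (which lies in $\xi$ since $\Delta_0\subseteq\xi$) would give $G\in\GP$ at once. Because $\xi$ is assumed closed only under \emph{finite} coproducts, I would instead construct the resolution of $G$ directly. Splicing a complete resolution of $M$ yields a $\fbzh$ $\xi$-exact projective resolution $\cdots\to P_1\to P_0\to M$ and a $\fbzh$ $\xi$-exact coproper right resolution $M\to P^0\to P^1\to\cdots$, whose syzygies and cosyzygies are again $\Gproj$ by construction of the complete resolution. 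The work is then to split each of these $\Mbe$-triangles compatibly with $M=G\oplus G'$, extracting $\fbzh$ $\Mbe$-triangles $G\to Q^i\to G^i\dashrightarrow$ and $G_{i+1}\to Q_i\to G_i\dashrightarrow$ in $\xi$ with projective middle terms and all $G^i,G_i\in\GP$, and to splice these into one doubly infinite $\fbzh$ complex of projectives with $G$ a cycle.

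The main obstacle is exactly this extraction: isolating from a coproper right $\P$-resolution of $M=G\oplus G'$ a corresponding one for the summand $G$ whose cosyzygies remain $\Gproj$, without recourse to infinite coproducts. Here I would lean on the fact that $\P$ is closed under direct summands (Remark \ref{REM1}(1)) and on Schanuel's Lemma (Corollary \ref{Sch}) to compare the candidate syzygies, together with Lemma \ref{ABC} to propagate $\Gproj$ through each newly built cycle. The delicate point to verify is that the spliced doubly infinite complex is genuinely $\fbzh$ in every degree, and not merely $\xi$-exact, since it is precisely this total acyclicity that certifies $G\in\GP$.
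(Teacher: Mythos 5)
Your direct-sum argument is complete and correct: termwise coproducts of complete $\xi$-projective resolutions work because $\P$ and $\xi$ are closed under finite coproducts, $\mathfrak{s}$ is additive, and $\Mcc(X\oplus Y,W)\cong\Mcc(X,W)\oplus\Mcc(Y,W)$ preserves $\fbzh$ness. Note, though, that the paper itself offers no proof of this proposition at all; it is quoted verbatim from \citep[Theorem 4.17]{JDP}, so the entire burden you have taken on is the content of that cited theorem.

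For direct summands there is a genuine gap, and you have located it yourself without closing it. The Eilenberg swindle is correctly ruled out (Definition \ref{ZL} only gives closure of $\xi$ under \emph{finite} coproducts; indeed, the paper later adds countable-coproduct hypotheses explicitly when it wants swindle-type arguments, e.g.\ in Theorem \ref{dyt}). Your fallback plan then hinges on ``splitting each $\Mbe$-triangle compatibly with $M=G\oplus G'$,'' but the differentials of a complete resolution of $M$ have no reason to respect the decomposition $G\oplus G'$, so nothing can simply be projected onto the $G$-component. The left half of the resolution of $G$ is comparatively harmless (enough $\xi$-projectives plus Lemma \ref{ABC}), but the right half is the crux: from a $\fbzh$ $\Mbe$-triangle $G\oplus G'\to P^0\to L\dashrightarrow$ in $\xi$ one must manufacture a $\fbzh$ $\Mbe$-triangle $G\to Q^0\to L_0\dashrightarrow$ in $\xi$ with $Q^0\in\P$ and with $L_0$ again of the same form (a summand-complement of a $\Gproj$ object), so that the construction can be iterated and the splice shown totally acyclic. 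The tools you invoke do not supply this step: Schanuel's Lemma (Corollary \ref{Sch}) compares left syzygies only and has no cosyzygy counterpart in the paper, and Lemma \ref{ABC} propagates $\Gproj$ness only when the third term of the triangle is already known to be $\Gproj$, which is exactly what is in question for the cosyzygies $L_0$. This iterative cosyzygy construction is the substance of the cited Theorem 4.17 of \cite{JDP}, and your proposal stops exactly where it begins.
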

\begin{lem}\label{HZG}
	Let $A\in \widehat{\mathcal{GP}}(\xi), G\in\GP$, then $\Gpd (A\oplus G)\leqslant\Gpd A$;
\end{lem}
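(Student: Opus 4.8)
The plan is to read the bound straight off the inductive definition of $\Gpd$, so that no appeal to Lemma \ref{ABC} is needed; the construction simply transports a $\xi$-$\mathcal{G}$projective resolution of $A$ to one of $A\oplus G$ by adjoining $G$ in degree zero. Set $n=\Gpd A$, which is a finite integer since $A\in\widehat{\mathcal{GP}}(\xi)$. I would first treat $n\leqslant 0$: if $\Gpd A=0$ then $A\in\GP$, and since $\GP$ is closed under direct sums by Proposition \ref{CLOD}, we get $A\oplus G\in\GP$ and hence $\Gpd(A\oplus G)\leqslant 0=\Gpd A$.

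For the main case $n\geqslant1$, the clause ``$\Gpd A\leqslant n$'' in the definition hands us an $\Extri$ $K\longrightarrow G'\longrightarrow A\dashrightarrow$ in $\xi$ with $G'\in\GP$ and $\Gpd K\leqslant n-1$. The key move is to take the direct sum of this $\Extri$ with the split $\Extri$ $0\longrightarrow G\stackrel{1_G}{\longrightarrow}G\dashrightarrow$, which belongs to $\Delta_0\subseteq\xi$ by Definition \ref{ZL}. Because $\xi$ is closed under finite coproducts (Definition \ref{ZL} again) and the realization $\mathfrak{s}$ is additive, the direct sum
$$K\longrightarrow G'\oplus G\longrightarrow A\oplus G\dashrightarrow$$
is again an $\Extri$ in $\xi$; moreover its middle term $G'\oplus G$ lies in $\GP$, once more by the closure of $\GP$ under direct sums in Proposition \ref{CLOD}. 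Since the left-hand term $K$ already satisfies $\Gpd K\leqslant n-1$, this single $\Extri$ verifies the defining condition for $\Gpd(A\oplus G)\leqslant n=\Gpd A$, which is the assertion.

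I do not expect a substantive obstacle: the whole proof reduces to checking that the direct sum of the chosen resolution $\Extri$ for $A$ with the split $\Extri$ on $G$ remains in $\xi$ and retains a $\GP$ middle term, both of which are immediate from the proper-class axioms and Proposition \ref{CLOD}. The one place that wants a word of care is the degenerate convention at $A=0$: there $\Gpd A=-1$ while $\Gpd(A\oplus G)=\Gpd G$ can be $0$, so the inequality should be understood for the genuinely relevant range $\Gpd A\geqslant0$, which is exactly what the direct-sum construction above delivers.
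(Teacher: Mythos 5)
Your proposal is correct and follows essentially the same route as the paper: both take the resolution $\Extri$ $K\to G_A\to A$ in $\xi$, sum it with the split $\Extri$ $0\to G\to G$ using closure of $\xi$ under finite coproducts, and invoke Proposition \ref{CLOD} to keep the middle term in $\GP$. Your extra remarks on the $n=0$ and $A=0$ cases are harmless refinements the paper leaves implicit.
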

\begin{proof}
	Let $\Gpd A=n$, then there exists an $\Mbe$-triangle $K\longrightarrow G_A\longrightarrow A\dashrightarrow$ in $\xi$  where $G_A\in\GP$ and $\Gpd K\leqslant n-1$.
	
	Note that the $\Mbe$-triangle $ 0\longrightarrow G\stackrel{1}\longrightarrow G\dashrightarrow$ is in $\xi$ since it is split. So we have the $\Mbe$-triangle
	$$\xymatrix{ K\ar[r]&G_A\oplus G\ar[r]&A\oplus G\ar@{-->}[r]&}$$
	in $\xi$ since $\xi$ is closed under finite direct sums. Because $G_A$ and $G$ are both in $\GP$, then $G_A\oplus G\in\GP$ by Proposition \ref{CLOD}. Hence, $\Gpd (A\oplus G)\leqslant n$ by definition of $\Gproj$ dimension, i.e.
	$$\Gpd (A\oplus G)\leqslant\Gpd A$$
\end{proof}

\begin{cor}{yxh}
	If $\Gpd A\leqslant n$,then there exists an $\Mbe$-triangle $$\xymatrix{ K\ar[r]&P\ar[r]&A\ar@{-->}[r]&}$$ in $\xi$ where $P\in\P$ and $\Gpd K \leqslant n-1$.
\end{cor}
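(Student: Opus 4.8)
The plan is to produce the asserted $\mathbb{E}$-triangle from the hypothesis ``enough $\xi$-projectives'' and then to control the $\xi$-$\mathcal{G}$projective dimension of its kernel by comparing it, via a pullback, with the Gorenstein presentation coming from the definition of $\Gpd A\le n$. Assume $n\ge 1$ (for $n=0$ the statement would force $A\in\P$). Since $\Mcc$ has enough $\xi$-projectives, choose an $\mathbb{E}$-triangle $K\to P\to A\dashrightarrow$ in $\xi$ with $P\in\P$; this is the triangle we want, so the task is to show $\Gpd K\le n-1$. By definition of $\Gpd A\le n$ there is an $\mathbb{E}$-triangle $K'\to G\to A\dashrightarrow$ in $\xi$ with $G\in\GP$ and $\Gpd K'\le n-1$. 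Applying Lemma \ref{BH}(1) to these two $\mathbb{E}$-triangles over the common object $A$ yields an object $M$ sitting in $\mathbb{E}$-triangles $K\to M\to G\dashrightarrow$ and $K'\to M\to P\dashrightarrow$, both in $\xi$ because $\xi$ is closed under base change. As $P\in\P$, the second splits by Remark \ref{REM1}(2), so $M\cong K'\oplus P$ and the first reads $K\to K'\oplus P\to G\dashrightarrow$ with $G\in\GP$; moreover $\Gpd(K'\oplus P)\le\Gpd K'\le n-1$ by Lemma \ref{HZG}.

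It therefore suffices to prove the left-term inequality $(L_m)$: in an $\mathbb{E}$-triangle $X\to Y\to Z\dashrightarrow$ in $\xi$ with $Z\in\GP$ and $\Gpd Y\le m$ one has $\Gpd X\le m$ (apply it with $m=n-1$). I would obtain $(L_m)$ from two companion inequalities proved simultaneously by induction on $m\ge 0$: a middle-term statement $(M_m)$ --- if $X\in\GP$ and $\Gpd Z\le m$ then $\Gpd Y\le m$ --- and a cokernel statement $(C_m)$ --- if $Y\in\GP$ and $\Gpd X\le m$ then $\Gpd Z\le m+1$. Here $(M_0)$ is precisely Lemma \ref{ABC}, and $\P\subseteq\GP$ is used freely.

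Each inductive implication follows the same recipe: resolve the object carrying the $\GP$-hypothesis by a $\xi$-projective (or by a $\GP$-object), and feed the resulting composable deflations into the octahedral axiom \citep[Lemma 3.8]{JDP} or into Lemma \ref{BH}(1), which keep the new $\mathbb{E}$-triangles inside $\xi$. For $(C_m)$ from $(M_m)$: resolve $Y\in\GP$ as $Y_1\to Q\to Y\dashrightarrow$ with $Q\in\P$, $Y_1\in\GP$, split the composite $Q\to Y\to Z$ into $V\to Q\to Z\dashrightarrow$ and $Y_1\to V\to X\dashrightarrow$, apply $(M_m)$ to get $\Gpd V\le m$, and read $\Gpd Z\le m+1$ off $V\to Q\to Z\dashrightarrow$ by the definition of $\Gpd$. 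For $(M_m)$ from $(C_{m-1})$ ($m\ge 1$): take $Z'\to G_Z\to Z\dashrightarrow$ with $G_Z\in\GP$, $\Gpd Z'\le m-1$, use Lemma \ref{BH}(1) over $Z$ to build $M'$ with $X\to M'\to G_Z\dashrightarrow$ (so $M'\in\GP$ by Lemma \ref{ABC}) and $Z'\to M'\to Y\dashrightarrow$, then apply $(C_{m-1})$. Finally $(L_m)$ ($m\ge 1$) from $(C_{m-1})$: resolve $Y$, decompose the composite $G_Y\to Y\to Z$ into $V\to G_Y\to Z\dashrightarrow$ (so $V\in\GP$ by Lemma \ref{ABC}, since $Z\in\GP$) and $Y'\to V\to X\dashrightarrow$, and apply $(C_{m-1})$; the base $(L_0)$ is again Lemma \ref{ABC}.

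The main obstacle is the bookkeeping of this simultaneous induction: tracking, at every octahedral or pullback step, which of the three positions carries the $\GP$-hypothesis (so that Lemma \ref{ABC} applies in the correct orientation), checking that each constructed $\mathbb{E}$-triangle again belongs to the proper class $\xi$ (using closure under base and cobase change and saturation), and ordering the steps as $(M_0)\Rightarrow(C_0)\Rightarrow(M_1)\Rightarrow\cdots$ so the induction closes. If the $\mathbb{E}$-triangle inequalities for $\Gpd$ (Theorem \ref{XMYY}) are already available at this point, the whole of the last two paragraphs collapses to a one-line citation of the left-term inequality, leaving only the pullback reduction of the first paragraph.
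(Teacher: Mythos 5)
Your argument is correct, and its first half is essentially the paper's: both proofs compare the $\xi$-projective presentation $K\to P\to A\dashrightarrow$ with the presentation $K'\to G\to A\dashrightarrow$ furnished by $\Gpd A\leqslant n$, and both arrive at an $\Mbe$-triangle $K\to K'\oplus P\to G\dashrightarrow$ in $\xi$ with $G\in\GP$ and $\Gpd (K'\oplus P)\leqslant n-1$ via Lemma \ref{HZG}. You reach that triangle by the pullback of Lemma \ref{BH}(1) together with the splitting of $K'\to M\to P\dashrightarrow$ from Remark \ref{REM1}(2), whereas the paper lifts the deflation $P\to A$ through $G\to A$ using $P\in\P$ and invokes \citep[Lemma 3.6]{JDP} to produce the same triangle as a mapping cone; these constructions are interchangeable. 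The genuine divergence is in the final step: the paper finishes in one line by citing \citep[Lemma 5.1]{JDP}, which asserts that $\Gpd$ is unchanged along an $\Mbe$-triangle in $\xi$ whose end term lies in $\GP$, giving $\Gpd K=\Gpd(K'\oplus P)\leqslant n-1$ at once, while you re-prove the single inequality you need (third term in $\GP$ and $\Gpd$ of the middle term at most $m$ forces $\Gpd$ of the first term at most $m$) by a simultaneous induction on three companion statements with base case Lemma \ref{ABC}. That induction is sound --- the octahedral triangles you form stay in $\xi$ because a composite of $\xi$-deflations is $\Mcc(\P,-)$-epic and hence a $\xi$-deflation by Lemma \ref{FBZH1}, and the companion triangles arise by base change --- so your route buys self-containedness at the cost of considerable length, while the paper's buys brevity at the cost of leaning on the companion reference. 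Two side remarks in your favour: the statement does implicitly require $n\geqslant 1$, which the paper does not say, and your fallback to Theorem \ref{XMYY}(2) would not be circular, although that theorem is only proved later in the paper.
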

\begin{proof}
	There exists an $\Mbe$-triangle $\xymatrix{ K_A\ar[r]^g&G\ar[r]^f&A\ar@{-->}[r]^{\delta}&}$ in $\xi$ ,where $G$ is in $\GP$ and $\Gpd K_A\leqslant n-1$ since  $\Gpd A\leqslant n$. Because $\Mcc$ has enough $\xi$-projectives, there exists an $\Mbe$-triangle $\xymatrix{ K\ar[r]^{g'}&P\ar[r]^{f'}&A\ar@{-->}[r]^{\delta'}&}$ in $\xi$ with $P\in\P$.
	$$\xymatrix{
		K\ar[r]^{g'}\ar@{-->}[d]^x&P\ar[r]^{f'}\ar@{-->}[d]^y &A\ar@{=}[d]\ar@{-->}[r]^{\delta'}&\\
		K_A\ar[r]^g&G\ar[r]^f&A\ar@{-->}[r]^{\delta}&
	}
	$$
	Since $P\in\P$, there exists a morphism  $y\in \Mcc(P,G)$ such that $gf=f'$. By \citep[Lemma 3.6]{JDP}, there exists a morphism $x\in\Mcc(K,K_A)$ which gives a morphism of $\Mbe$-triangles and an $\Mbe$-triangle
	$$ K\stackrel{\begin{tiny}\begin{bmatrix}
		-x\\
		g'
		\end{bmatrix}\end{tiny}}\longrightarrow K_A \oplus P \stackrel{\begin{tiny}\begin{bmatrix}
		g\ y
		\end{bmatrix}\end{tiny}}\longrightarrow G\stackrel{f^*\delta'}\dashrightarrow
	$$
	which is in $\xi$ since $\xi$ is closed under base change. Then one can get that $$\Gpd K=\Gpd( K_A \oplus P)\leqslant n-1$$ by Lemma \ref{HZG} and \citep[Lemma 5.1]{JDP}.
\end{proof}

\begin{lem}\label{FBZH}
	If $\xymatrix{A\ar[r]^{x}&B\ar[r]^y&C\ar@{-->}[r]^{\delta}&}$ is an $\Mbe$-triangle in $\xi$ with $C\in\GP$, then it is $\Mcc(-,\widehat{\Mcp}(\xi))$-exact. Particularly, it is $\fbzh$.
\end{lem}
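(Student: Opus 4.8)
The plan is to fix $W\in\widehat{\Mcp}(\xi)$ and apply the contravariant functor $\Mcc(-,W)$ to the given $\mathbb{E}$-triangle $A\xrightarrow{x}B\xrightarrow{y}C$, realizing $\delta\in\Mbe(C,A)$. By the induced long exact sequence \citep[Corollary 3.12]{HY},
$$\Mcc(C,W)\xrightarrow{\Mcc(y,W)}\Mcc(B,W)\xrightarrow{\Mcc(x,W)}\Mcc(A,W)\xrightarrow{\delta^{\sharp}_{W}}\Mbe(C,W),$$
exactness in the middle is automatic, so the whole statement reduces to two points: (i) $\Mcc(y,W)$ is a monomorphism, and (ii) the connecting map $\delta^{\sharp}_{W}$ vanishes (equivalently $\Mcc(x,W)$ is surjective). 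Specializing to $W\in\P$ at the end yields the ``$\fbzh$'' assertion, since $\P\subseteq\widehat{\Mcp}(\xi)$.

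The device I would use is the complete $\xi$-projective resolution $\mathbf{P}\colon\cdots\to P_{1}\to P_{0}\to P_{-1}\to\cdots$ provided by $C\in\GP$, in which $C$ sits as a syzygy and which is $\Mcc(-,\P)$-exact. First I would prove that $\Mcc(\mathbf{P},W)$ is an exact complex of abelian groups for every $W\in\widehat{\Mcp}(\xi)$, by induction on $\xi$-pd$\,W$. The case $\xi$-pd$\,W=0$ is exactly the $\Mcc(-,\P)$-exactness built into $\mathbf{P}$. For the step, pick an $\mathbb{E}$-triangle $W_{1}\to P\to W\dashrightarrow$ in $\xi$ with $P\in\P$ and $\xi$-pd$\,W_{1}=\xi$-pd$\,W-1$; because every $P_{i}$ is $\xi$-projective, applying $\Mcc(P_{i},-)$ to this triangle produces a short exact sequence, and these are natural in $i$, so they assemble into a short exact sequence of complexes
$$0\to\Mcc(\mathbf{P},W_{1})\to\Mcc(\mathbf{P},P)\to\Mcc(\mathbf{P},W)\to0.$$
The middle complex is acyclic (as $\mathbf{P}$ is $\Mcc(-,\P)$-exact and $P\in\P$), so the homology long exact sequence together with the inductive hypothesis forces $\Mcc(\mathbf{P},W)$ to be acyclic.

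From this acyclicity I would read off (i) and (ii). Write the two resolution triangles of $\mathbf{P}$ adjacent to $C$ as $K_{C}\xrightarrow{g}P_{0}\xrightarrow{f}C$ (its projective cover) and $C\xrightarrow{u}P_{-1}\xrightarrow{v}C_{1}$ (its cosyzygy triangle), so that the differential is $d=uf$ and $vu=0$. For (ii), the vanishing of the positive-degree homology of $\Mcc(\mathbf{P},W)$ says exactly that every $\mathbb{E}$-triangle $W\to E\to C\dashrightarrow$ in $\xi$ splits, and applying this to the cobase changes of $\delta$ gives $\delta^{\sharp}_{W}=0$. For (i), I would reduce to injectivity of $\Mcc(f,W)$: forming by Lemma \ref{BH} the pullback of $A\xrightarrow{x}B\xrightarrow{y}C$ along $f$, the resulting triangle $A\to M\xrightarrow{e_{1}}P_{0}\dashrightarrow$ splits by Remark \ref{REM1}(2) (as $P_{0}\in\P$), so $e_{1}$ is a split epimorphism and $fe_{1}=ye_{2}$; hence $\phi y=0$ forces $\phi fe_{1}=0$ and then $\phi f=0$. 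Finally $\Mcc(f,W)$ is injective: given $\phi f=0$, surjectivity of $\Mcc(u,W)$ — which is the instance of (ii) for the $\xi$-$\mathcal{G}$projective object $C_{1}$ — lets me write $\phi=\psi u$, whence $\psi d=\psi uf=\phi f=0$; exactness of $\Mcc(\mathbf{P},W)$ then puts $\psi$ in the image of the previous differential, $\psi=\chi u_{1}v$ for some $\chi$, and $vu=0$ collapses $\phi=\psi u=\chi u_{1}(vu)$ to $0$.

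The step I expect to be the genuine obstacle is the injectivity (i). Over a module or exact category it is free, because the epimorphism $B\to C$ makes $\Mcc(C,W)\to\Mcc(B,W)$ injective; but in an extriangulated category a deflation need not be an epimorphism — already in the triangulated case it is not — so nothing can be extracted from the triangle $A\to B\to C$ in isolation. This is precisely why the argument must pass through the entire complete resolution of $C$ and use the finiteness of $\xi$-pd$\,W$: the left exactness that is automatic classically has here to be manufactured from the $\mathcal{G}$projective structure. Surjectivity (ii), by contrast, is the routine half, amounting to the splitting of $\xi$-triangles ending in a $\xi$-$\mathcal{G}$projective object.
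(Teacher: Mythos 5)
Your overall architecture is reasonable, and since the paper's own ``proof'' is only a pointer to \citep[Lemma 5.3]{JDP} there is nothing internal to compare against; your acyclicity induction on $\xi$-pd$\,W$ is correct, and the pullback trick reducing injectivity of $\Mcc(y,W)$ to injectivity of $\Mcc(f,W)$ is a genuinely good step. But there is a gap at the point you treat as routine, namely (ii). You assert that the vanishing of the positive-degree homology of $\Mcc(\mathbf{P},W)$ ``says exactly'' that every $\Mbe$-triangle $W\to E\to C\dashrightarrow$ in $\xi$ splits. Unwinding this: given such a triangle with class $\epsilon$, $\xi$-projectivity of $P_0$ and the long exact sequence give $\epsilon=t_*\delta_0$ for some $t\colon K_1\to W$, where $K_1\stackrel{g_0}{\to}P_0\stackrel{f_0}{\to}C\stackrel{\delta_0}{\dashrightarrow}$ is the first resolution triangle; then $tf_1$ is a cocycle, and $H^1=0$ yields $tf_1=\alpha g_0f_1$. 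To conclude $\epsilon=0$ you must pass from $(t-\alpha g_0)f_1=0$ to ``$t-\alpha g_0$ factors through $g_0$'' (equivalently $(t-\alpha g_0)_*\delta_0=0$ by Lemma \ref{FJ}), i.e.\ you must cancel the deflation $f_1$ on the right in $\Mcc(-,W)$. This is precisely the left-exactness that you yourself point out is \emph{not} automatic in an extriangulated category; a priori $\ker\Mcc(f_1,W)$ need not lie in $\mathrm{im}\,\Mcc(g_0,W)$, and knowing that it does is (an instance of) the very lemma being proved, applied to the triangle $K_2\to P_1\to K_1$. So as written, (ii) is circular, and since your proof of (i) invokes (ii) for the cosyzygy $C_1$, the gap propagates.

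The statement (ii) is nevertheless true and can be proved without the complex and without the homology-to-extension dictionary, by a direct induction on $m=\xi\text{-pd}\,W$. For $m=0$: any $\xi$-triangle $\epsilon\colon W\to E\to C$ satisfies $f_0^*\epsilon=0$ (its base change along $f_0$ ends in $P_0\in\P$ and splits by Remark \ref{REM1}(2)), hence $\epsilon=t_*\delta_0$ for some $t\colon K_1\to W$; as $\delta_0$ is $\fbzh$ and $W\in\P$, $t$ factors through $g_0$, so $\epsilon=0$ by Lemma \ref{FJ}. For $m>0$: choose $W_1\to Q\stackrel{p}{\to}W\dashrightarrow$ in $\xi$ with $Q\in\P$ and $\xi\text{-pd}\,W_1=m-1$; the obstruction to lifting $t\colon K_1\to W$ through $p$ is the class of a $\xi$-triangle $W_1\to\,\cdot\,\to K_1$ with $K_1\in\GP$, which vanishes by the inductive hypothesis, so $t=p\tilde t$ with $\tilde t\colon K_1\to Q$, and $\tilde t$ factors through $g_0$ since $Q\in\P$; again $\epsilon=t_*\delta_0=0$. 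With (ii) secured this way, your acyclicity of $\Mcc(\mathbf{P},W)$ and your argument for (i) go through unchanged.
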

\begin{proof}
	See the proof of \citep[Lemma 5.3]{JDP}.
\end{proof}

\begin{defn}\label{Gpr}
	A \emph{$\Gproj$ resolution} of an object $A\in\Mcc$ is a $\xi$-exact complex
	\[
	\xymatrix{
		\cdots\ar[r]&G_{n}\ar[r]&G_{n-1}\ar[r]&{\cdots}\ar[r]&G_{1}\ar[r]&G_{0}\ar[r]&A\ar[r]&0
	}
	\]
	in $\Mcc$ such that $G_{n}\in\GP$ for all $n\geqslant0$.
\end{defn}

\begin{thm}\label{ZHFJ}
	Let any $A$ be a object in $\Mcc$. Then $A$ has a $\xi$-projective resolution which is $\mathcal{C}(-,\P)$-exact if and only if $A$ has a $\Gproj$ resolution which is $\fbzh$.
\end{thm}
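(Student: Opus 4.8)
The plan is to prove the two implications separately; the forward one is formal and the backward one carries all of the content.

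For ``$\Rightarrow$'' I would first record that $\P\subseteq\GP$: for $P\in\P$ the complex $\cdots\to 0\to P\xrightarrow{1_P}P\to 0\to\cdots$ is a complete $\Mcp(\xi)$-exact complex, since its resolution $\Mbe$-triangles are split and split $\Mbe$-triangles are $\mathcal{C}(-,\mathcal{W})$-exact against every class $\mathcal{W}$; hence $P$ occurs as one of its syzygies and is $\Gproj$. Consequently a $\fbzh$ $\xi$-projective resolution of $A$ is already a $\Gproj$ resolution, carried by the very same $\fbzh$ resolution $\Mbe$-triangles, so this direction requires nothing further.

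For ``$\Leftarrow$'' fix a $\fbzh$ $\Gproj$ resolution $\cdots\to G_1\to G_0\to A\to 0$ with resolution $\Mbe$-triangles $C_{n+1}\to G_n\to C_n\dashrightarrow$ (so $C_0=A$), each $\fbzh$ and each $G_n\in\GP$. Everything reduces to a one-step statement applied repeatedly: \emph{if $A$ admits a $\fbzh$ $\Gproj$ resolution, then there is a $\fbzh$ $\Mbe$-triangle $\Omega\to P\to A\dashrightarrow$ in $\xi$ with $P\in\P$ such that $\Omega$ again admits a $\fbzh$ $\Gproj$ resolution.} Granting this, I iterate to obtain objects $A=\Omega_0,\Omega_1,\Omega_2,\dots$, projectives $P_0,P_1,\dots\in\P$ and $\fbzh$ $\Mbe$-triangles $\Omega_{n+1}\to P_n\to\Omega_n\dashrightarrow$; splicing them gives a $\xi$-exact complex $\cdots\to P_1\to P_0\to A\to 0$ with all terms in $\P$ and all resolution $\Mbe$-triangles $\fbzh$, which is the required resolution. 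To prove the one-step statement, use that $G_0\in\GP$ sits in a $\fbzh$ $\Mbe$-triangle $L\to P\xrightarrow{\pi}G_0\dashrightarrow$ with $P\in\P$ and $L\in\GP$, coming from the complete $\xi$-projective resolution witnessing $G_0\in\GP$. The composite of $\pi$ with the deflation $G_0\xrightarrow{\rho}A$ of $C_1\to G_0\xrightarrow{\rho}A\dashrightarrow$ is again a $\xi$-deflation, so feeding these two $\Mbe$-triangles into the dual of \citep[Lemma 3.8]{JDP} produces $\Mbe$-triangles $\Omega\to P\xrightarrow{\rho\pi}A\dashrightarrow$ and $L\to\Omega\to C_1\dashrightarrow$ in $\xi$; applying $\Mcc(-,Q)$ for $Q\in\P$ to the resulting octahedral diagram and chasing (using that the two inputs are $\fbzh$) shows both new triangles are $\fbzh$. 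To resolve $\Omega$ I deliberately avoid a horseshoe/lifting argument — the quotient $C_1$ is resolved by $\Gproj$, not $\xi$-projective, objects, so lifts need not exist — and instead apply Lemma \ref{BH}(1) to $L\to\Omega\to C_1\dashrightarrow$ and $C_2\to G_1\to C_1\dashrightarrow$, which share the quotient $C_1$, obtaining $\Mbe$-triangles $L\to M\to G_1\dashrightarrow$ and $C_2\to M\to\Omega\dashrightarrow$ in $\xi$. Since $L,G_1\in\GP$, Lemma \ref{ABC} gives $M\in\GP$, and a further $\Mcc(-,Q)$-chase gives that $C_2\to M\to\Omega\dashrightarrow$ is $\fbzh$; splicing $M\to\Omega$ with the tail $\cdots\to G_2\to C_2\to 0$ of the given resolution then exhibits $\cdots\to G_2\to M\to\Omega\to 0$ as a $\fbzh$ $\Gproj$ resolution of $\Omega$, closing the induction.

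The main obstacle is propagating the $\fbzh$ property through the diagrams produced by \citep[Lemma 3.8]{JDP} and Lemma \ref{BH}. Unlike in an exact category, a deflation in $\extri$ need not be an epimorphism, so the injectivity of $0\to\Mcc(C,Q)\to\Mcc(B,Q)$ at the left end of the required short exact sequences is not automatic from the long exact sequences of \citep[Corollary 3.12]{HY}; it must be extracted, together with middle exactness and the vanishing of the relevant connecting maps, from the injectivity already present in the input $\Mbe$-triangles via the nine-lemma pattern of the octahedral and pullback diagrams. Carrying out these exactness transfers carefully, rather than the formal splicing, is where the real work lies.
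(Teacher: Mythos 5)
Your proof is correct and takes essentially the same route as the paper's: the first $\xi$-projective term is obtained by composing a $\xi$-projective deflation $P\to G_0$ with $G_0\to A$ via the dual octahedral axiom, and the new syzygy $\Omega$ (the paper's $E$) is re-resolved by combining $L\to\Omega\to C_1$ with $C_2\to G_1\to C_1$ through Lemma \ref{BH}(1) and Lemma \ref{ABC}, with all $\fbzh$ properties checked by diagram chases. The only differences are presentational: you isolate the one-step statement and induct explicitly, and you invoke closure of $\xi$-deflations under composition where the paper instead verifies that $\Omega\to P\to A$ lies in $\xi$ by applying $\Mcc(\P,-)$ and Lemma \ref{FBZH1}.
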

\begin{proof}
	The ``if" part is obvious since $\P\subseteq\GP$. Assume that $A$ has a $\Gproj$ resolution which is $\fbzh$. Then there exists an $\Mbe$-triangle $K_1\stackrel{g_0}{\longrightarrow}G_0\stackrel{f_0}{\longrightarrow}A\stackrel{\delta_0}{\dashrightarrow}$ which is $\fbzh$, where $G_0\in\GP$ and $K_1$ has a $\Gproj$ resolution which is $\fbzh$. So there exists an $\mathbb{E}$-triangle $\xymatrix{G'_o\ar[r]&P_0\ar[r]&G_0\ar@{-->}[r]&}$ such that $G'_0\in \GP,P_0\in\P$, which is $\mathcal{C}(-,\mathcal{P}(\xi))$-exact. By $\rm (ET4)^{op}$, there exists a commutative diagram:
	$$
	\xymatrix{
		G'_0 \ar@{=}[d]\ar[r]&E\ar[d]\ar[r]& K_1\ar[d]\ar@{-->}[r]& \\
		G'_0 \ar[r]&P_0\ar[d]\ar[r]& G_0\ar[d]\ar@{-->}[r]& \\
		& A\ar@{-->}[d]\ar@{=}[r] & A\ar@{-->}[d]\\
		&&   }
	$$
	
	Note that $\xymatrix{G'_0\ar[r]&E\ar[r]& K_1\ar@{-->}[r]& }$ is an $\Mbe$-triangle in $\xi$ since $\xi$ is closed under base change. Applying the functor $\mathcal{C}(\mathcal{P}(\xi),-)$ to the above diagram, it is easy to see that the $\Mbe$-triangle $\xymatrix{E\ar[r]&P_0\ar[r]&A\ar@{-->}[r]&}$ is $\mathcal{C}(\mathcal{P}(\xi),-)$-exact by a diagram chasing. Hence it is in $\xi$ by Lemma \ref{FBZH1}.  Applying the functor $\mathcal{C}(-,\mathcal{P}(\xi))$ to the above diagram, it is also easy to see that
	$$\xymatrix{E\ar[r]&P_0\ar[r]&A\ar@{-->}[r]&} \  \text{and}\  \xymatrix{G'_0\ar[r]&E\ar[r]&K_1\ar@{-->}[r]&}$$
	are $\fbzh$ by a diagram chasing. Since $K_1$ has a $\Gproj$ resolution which is $\fbzh$, there exists an $\Mbe$-triangle $K_2\longrightarrow G_1\longrightarrow K_1 \dashrightarrow$ which is $\Mcc(-,\Mcp(\xi))$-exact, where $G_1\in\GP$, and $K_2$ has a $\Gproj$ resolution which is $\fbzh$. By Lemma \ref{BH}, there exists following commutative diagram:
	$$
	\xymatrix{
		&G'_0\ar@{=}[r]\ar[d]&G'_0\ar[d]\\
		K_2\ar[r]\ar@{=}[d]&M\ar[r]\ar[d]&E\ar[d]\ar@{-->}[r]&\\
		K_2\ar[r]&G_1\ar@{-->}[d]\ar[r]&K_1\ar@{-->}[r]\ar@{-->}[d]&\\
		&&
	}
	$$
	The $\Mbe$-triangles $\xymatrix{K_2\ar[r]&M\ar[r]&E\ar@{-->}[r]&}$ and $\xymatrix{G'_0\ar[r]&M\ar[r]&G_1\ar@{-->}[r]&}$ are in $\xi$ since $\xi$ is closed under base change. It implies $M\in\GP$ by Lemma \ref{ABC} because of $G'_0\in\GP$ and $G_1\in\GP$. Applying the functor $\fbzh$ to the above diagram, it is not hard to get that the $\Mbe$-triangle $\xymatrix{K_2\ar[r]&M\ar[r]&E\ar@{-->}[r]&}$ is $\fbzh$ by a diagram chasing. Proceeding in this manner, we can obtain a $\fbzh$ $\xi$-projective resolution of $A$.
\end{proof}

Let $\mathcal{G}^0\mathcal{P}(\xi)=\P$ and $\mathcal{G}^1\mathcal{P}(\xi)=\GP$. For any $n\geqslant 1$, let  $\mathcal{G}^{n+1}\mathcal{P}(\xi)=\mathcal{G}^{n}\mathcal{P}(\xi)$. Then we have a corollary as follows.
\begin{cor}
	For any $n\geqslant 1$, one can get that $\mathcal{G}^{n}\mathcal{P}(\xi)=\GP$.
\end{cor}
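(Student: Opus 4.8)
The plan is to prove the assertion by induction on $n$, reading the recursion in the natural way as $\mathcal{G}^{n+1}\mathcal{P}(\xi)=\mathcal{G}(\mathcal{G}^{n}\mathcal{P}(\xi))$, where for a class $\mathcal{X}$ I write $\mathcal{G}(\mathcal{X})$ for the objects that are $\xi$-$\mathcal{G}$projective with $\mathcal{X}$ playing the role of $\mathcal{P}(\xi)$, i.e. the syzygies of a doubly infinite $\xi$-exact, $\mathcal{C}(-,\mathcal{P}(\xi))$-exact complex whose terms lie in $\mathcal{X}$. The base case $n=1$ is the defining equality $\mathcal{G}^{1}\mathcal{P}(\xi)=\mathcal{GP}(\xi)$. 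Granting $\mathcal{G}^{n}\mathcal{P}(\xi)=\mathcal{GP}(\xi)$, the inductive step reduces to the single stability statement $\mathcal{G}(\mathcal{GP}(\xi))=\mathcal{GP}(\xi)$, which is what I will establish.

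One inclusion is free. Since $\mathcal{P}(\xi)\subseteq\mathcal{GP}(\xi)$, a complete $\xi$-projective resolution witnessing $G\in\mathcal{GP}(\xi)$ is already a doubly infinite $\mathcal{C}(-,\mathcal{P}(\xi))$-exact complex with terms in $\mathcal{GP}(\xi)$ and $G$ as a syzygy, so $\mathcal{GP}(\xi)\subseteq\mathcal{G}(\mathcal{GP}(\xi))$.

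For the reverse inclusion I take $G\in\mathcal{G}(\mathcal{GP}(\xi))$ and split its witnessing complex at $G$ into a left half, namely a $\mathcal{C}(-,\mathcal{P}(\xi))$-exact $\mathcal{G}$projective resolution of $G$, and a right half, namely a $\mathcal{C}(-,\mathcal{P}(\xi))$-exact coresolution $0\to G\to G^{0}\to G^{1}\to\cdots$ with all $G^{i}\in\mathcal{GP}(\xi)$. The left half is handled at once by Theorem \ref{ZHFJ}, which turns it into a genuine $\mathcal{C}(-,\mathcal{P}(\xi))$-exact $\xi$-projective resolution of $G$. The right half I build by a mirror image of the argument proving Theorem \ref{ZHFJ}: each $G^{i}\in\mathcal{GP}(\xi)$ sits, inside its own complete resolution, in a resolution $\mathbb{E}$-triangle $G^{i}\to P^{i}\to\bar G^{i}$ with $P^{i}\in\mathcal{P}(\xi)$, $\bar G^{i}\in\mathcal{GP}(\xi)$, and this triangle $\mathcal{C}(-,\mathcal{P}(\xi))$-exact. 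Composing $G\to G^{0}\to P^{0}$ and invoking \citep[Lemma 3.8]{JDP} produces an $\mathbb{E}$-triangle $G\to P^{0}\to E$ together with $G^{(1)}\to E\to\bar G^{0}$, where $G^{(1)}$ is the first cosyzygy of the right half. Pasting $G^{(1)}\to E\to\bar G^{0}$ against the next cosyzygy triangle $G^{(1)}\to G^{1}\to G^{(2)}$ by Lemma \ref{BH}(2) yields an object $M$ fitting into $E\to M\to G^{(2)}$ and $G^{1}\to M\to\bar G^{0}$. Since $G^{1},\bar G^{0}\in\mathcal{GP}(\xi)$, Lemma \ref{ABC} forces $M\in\mathcal{GP}(\xi)$, and the configuration $E\to M\to G^{(2)}$, with $M\in\mathcal{GP}(\xi)$ and $G^{(2)}\in\mathcal{G}(\mathcal{GP}(\xi))$, is the exact analogue of the starting datum $G\to G^{0}\to G^{(1)}$. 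Iterating produces a $\mathcal{C}(-,\mathcal{P}(\xi))$-exact coresolution $G\to P^{0}\to P^{1}\to\cdots$ by objects of $\mathcal{P}(\xi)$; splicing it with the left-hand projective resolution supplied by Theorem \ref{ZHFJ} gives a complete $\xi$-projective resolution having $G$ as a syzygy (the splice remains $\mathcal{C}(-,\mathcal{P}(\xi))$-exact by Lemma \ref{FBZH1}), whence $G\in\mathcal{GP}(\xi)$.

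The main obstacle is precisely this right-hand reconstruction. Because the standing hypotheses provide enough $\xi$-projectives but not enough $\xi$-injectives, Theorem \ref{ZHFJ} cannot be dualized as a black box; one must rerun its proof in the mirror direction, and the delicate point is to verify, by diagram chasing as in that proof, that $\mathcal{C}(-,\mathcal{P}(\xi))$-exactness propagates through each octahedral splice. The decisive feature that makes the recursion close up is that the middle term $M$ produced at every stage is controlled not by the cosyzygies $G^{(i)}$, which a priori only lie in $\mathcal{G}(\mathcal{GP}(\xi))$, but by the genuinely $\xi$-$\mathcal{G}$projective objects $\bar G^{j}$ and $G^{j}$; thus Lemma \ref{ABC} together with Proposition \ref{CLOD} keeps $M$ inside $\mathcal{GP}(\xi)$, and the induction never needs to assume in advance what it is trying to prove.
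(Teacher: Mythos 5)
Your proof is correct, and for the ``resolution'' half it coincides with the paper's: both reduce the inductive step to $\mathcal{G}(\mathcal{GP}(\xi))\subseteq\mathcal{GP}(\xi)$ and both feed the left half of the witnessing complex into Theorem \ref{ZHFJ} to obtain a $\mathcal{C}(-,\mathcal{P}(\xi))$-exact $\xi$-projective resolution of $A$. Where you genuinely diverge is that the paper stops there and asserts that this already forces $A\in\mathcal{GP}(\xi)$, even though the definition of a $\xi$-$\mathcal{G}$projective object requires a doubly infinite complete $\xi$-projective resolution, hence also a $\mathcal{C}(-,\mathcal{P}(\xi))$-exact coresolution of $A$ by $\xi$-projectives; the paper never touches the right half of the complex. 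Your mirror-image construction of that coresolution --- embedding each term $G^{i}$ into a projective via a triangle $G^{i}\to P^{i}\to \bar G^{i}$ taken from its own complete resolution, composing inflations, pasting with Lemma \ref{BH}(2), and using Lemma \ref{ABC} to keep the intermediate object $M$ inside $\mathcal{GP}(\xi)$ because it sits between $G^{1}$ and $\bar G^{0}$ rather than between cosyzygies --- supplies exactly the missing piece, and the diagram chases you defer are of the same routine kind as those in the proof of Theorem \ref{ZHFJ}. So your version buys a complete argument at the cost of roughly doubling its length, while the paper's is shorter but valid only if one already knows that a $\mathcal{C}(-,\mathcal{P}(\xi))$-exact $\xi$-projective resolution by itself characterizes $\xi$-$\mathcal{G}$projectivity, a fact not among the results quoted. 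Two harmless slips: the triangle $G\to P^{0}\to E$ comes from $(\mathrm{ET4})$ applied to two composable inflations rather than from \citep[Lemma 3.8]{JDP}, which presupposes all three triangles are given; and the $\mathcal{C}(-,\mathcal{P}(\xi))$-exactness of the spliced complex needs no appeal to Lemma \ref{FBZH1}, since every resolution triangle of the splice is $\mathcal{C}(-,\mathcal{P}(\xi))$-exact by construction.
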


\begin{proof}
	It is obvious that $\P\subseteq \GP\subseteq\cdots\subseteq{\mathcal{G}}^{n}
	\mathcal{P}(\xi)\subseteq {\mathcal{G}}^{n+1}\mathcal{P}(\xi)\subseteq{\cdots}$ by definition.
	
	For any $A\in\mathcal{G}^2\Mcp(\xi)$, there exist a $\fbzh$ $\Mbe$-triangle
	$$\xymatrix{K_{n+1}\ar[r]&G_n\ar[r]&K_n\ar@{-->}[r]&}$$
	for any $n\geqslant0$ such that $G_n\in\GP$ and $K_0=A$. Since $\Mcp(\xi)\subseteq\GP$, then we have the following complex $\mathbf{G}$
	$$
	\xymatrix@C=0.5cm@R=0.5cm{
		\mathbf{G:}\quad\cdots \ar[r]&G_2 \ar[rr] \ar[dr]&  &   G_1 \ar[rr]\ar[dr]& &G_0 \ar[dr]\ar[rr]&&A\ar[r]&0  \\
		K_3\ar[ur] &  &K_2\ar[ur]   &&K_1\ar[ur] &&A\ar@{=}[ur]            }
	$$
	which is a $\fbzh$ $\Gproj$ resolution of $A$. By Theorem \ref{ZHFJ}, $A$ has a $\xi$-projective resolution which is $\fbzh$. It is implies that $A\in\GP$. Hence, we have $\mathcal{G}^2\mathcal{P}(\xi)=\GP$. By using induction on $n$, we get
	$$ \mathcal{G}^{n}\mathcal{P}(\xi)=\GP$$
	for any integer $n\geqslant1$.
\end{proof}

At the end of this section, we give some inequalities of $\Gproj$ dimension in an $\Mbe$-triangle. Firstly, we have following lemma.
\begin{lem}[Horseshoe Lemma]\label{HS}
	Let $\xymatrix{A\ar[r]&B\ar[r]&C\ar@{-->}[r]&}$ be an $\Mbe$-triangle in $\xi$. Then there are $\xi$-projective resolutions $\mathbf{P}_A,\mathbf{P}_B$ and $\mathbf{P}_C$ of $A,B$ and $C$, respectively, and a commutative diagram
	$$\xymatrix{
		\mathbf{P}_A\ar[d]\ar[r]^{x^{\bullet}}&\mathbf{P}_B\ar[d]\ar[r]^{y^{\bullet}}&\mathbf{P}_C\ar[d]\ar@{-->}[r]&\\
		A\ar[r]&B\ar[r]&C\ar@{-->}[r]&
	}
	$$
	such that $\xymatrix{P^n_A\ar[r]^{x^n}&P^n_B\ar[r]^{y^n}&P^n_C\ar@{-->}[r]&}$ is a split $\Mbe$-triangle, i.e. $P^n_B\simeq P^n_A\oplus P^n_C$ for any $n\geqslant0$.
\end{lem}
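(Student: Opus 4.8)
The plan is to construct the three resolutions simultaneously by induction on homological degree, reducing everything to a single base step: given an $\mathbb{E}$-triangle $A\xrightarrow{f}B\xrightarrow{g}C\dashrightarrow\delta$ in $\xi$, I will produce $\xi$-projective deflations onto $A$ and $C$, set $P_B:=P_A\oplus P_C$, manufacture a compatible deflation $P_B\to B$, and extract a \emph{syzygy} $\mathbb{E}$-triangle $K_A\to K_B\to K_C$ in $\xi$. Iterating the base step on the successive syzygy triangles then assembles the whole ladder with split rows $P^n_A\to P^n_B\to P^n_C$.

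For the base step, since $\Mcc$ has enough $\xi$-projectives I first choose $\mathbb{E}$-triangles $K_A\xrightarrow{k_A}P_A\xrightarrow{p_A}A\dashrightarrow\theta_A$ and $K_C\xrightarrow{k_C}P_C\xrightarrow{p_C}C\dashrightarrow\theta_C$ in $\xi$ with $P_A,P_C\in\P$. Because $P_C\in\P$ and $g$ is a $\xi$-deflation, $\Mcc(P_C,g)$ is surjective, so $p_C$ lifts to $h\colon P_C\to B$ with $gh=p_C$. From the long exact sequence of \citep[Corollary 3.12]{HY}, the surjectivity of $\Mcc(P_C,g)$ forces the connecting map $(\delta_{\sharp})_{P_C}$ to vanish, so $p_C^{*}\delta=(\delta_{\sharp})_{P_C}(p_C)=0$. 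Setting $p_B:=\begin{bmatrix}fp_A & h\end{bmatrix}\colon P_A\oplus P_C\to B$, a direct check (using $gf=0$ and $gh=p_C$) shows that $(p_A,p_B,p_C)$ is a morphism from the split $\mathbb{E}$-triangle $P_A\to P_A\oplus P_C\to P_C$ to $A\to B\to C$, which already supplies the commuting degree-zero square with split row.

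To turn $p_B$ into a genuine $\xi$-deflation and to read off the next syzygy, I apply Lemma \ref{BH}(1) to the two $\mathbb{E}$-triangles $A\to B\xrightarrow{g}C$ and $K_C\to P_C\xrightarrow{p_C}C$. This yields an object $M$ together with $\mathbb{E}$-triangles $A\to M\to P_C\dashrightarrow p_C^{*}\delta$ and $K_C\to M\to B\dashrightarrow g^{*}\theta_C$, the latter lying in $\xi$ since $\xi$ is closed under base change. As $p_C^{*}\delta=0$, the first triangle splits, whence $M\simeq A\oplus P_C$. Forming the coproduct of $K_A\to P_A\to A$ with the split triangle on $P_C$ produces an $\mathbb{E}$-triangle $K_A\to P_A\oplus P_C\to A\oplus P_C\simeq M$ in $\xi$, and composing the two deflations $P_B=P_A\oplus P_C\to M\to B$ (whose composite is exactly $p_B$) through the dual form of \citep[Lemma 3.8]{JDP} delivers an $\mathbb{E}$-triangle $K_B\to P_B\xrightarrow{p_B}B\dashrightarrow$ together with the syzygy triangle $K_A\to K_B\to K_C\dashrightarrow$; both lie in $\xi$ because $\xi$ is closed under base and cobase change and is saturated.

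Feeding the syzygy triangle $K_A\to K_B\to K_C$ back into the base step and repeating produces the degree-$n$ rows $P^n_A\to P^n_B\to P^n_C$ (split, with $P^n_B\simeq P^n_A\oplus P^n_C$) and the vertical chain maps, thereby assembling the required commutative diagram; the differentials are the usual composites of a deflation onto a syzygy with the inflation of that syzygy into the preceding projective. I expect the main obstacle to be precisely the base step, namely verifying that $p_B$ really is a $\xi$-deflation and that the kernels reorganize into an $\mathbb{E}$-triangle $K_A\to K_B\to K_C$ lying in $\xi$: the naive kernel/cokernel bookkeeping of the module-theoretic Horseshoe Lemma is unavailable here, so this must be routed through the pullback of Lemma \ref{BH}, the composition-of-deflations statement \citep[Lemma 3.8]{JDP}, and the closure axioms of the proper class $\xi$, with the vanishing $p_C^{*}\delta=0$ serving as the small but decisive ingredient that makes the pullback $M$ split off $P_C$.
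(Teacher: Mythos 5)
Your argument is correct and is essentially the one the paper delegates to its citations: the printed proof is only a pointer to \citep[Lemma 4.14]{JDP} and \citep[Lemma 3.3]{JZP}, and your base step (lift $p_C$ through the $\xi$-deflation $g$, note $p_C^{*}\delta=0$, split the object $M$ from Lemma \ref{BH}(1), factor $p_B$ through $M$, and extract the syzygy $\mathbb{E}$-triangle $K_A\to K_B\to K_C$ in $\xi$ from the composition of $\xi$-deflations) is precisely the content of those lemmas, iterated in the standard way. The only cosmetic adjustment is to take $h$ to be the lift furnished by the splitting of $M$ rather than an arbitrary lift chosen beforehand, so that the composite $P_A\oplus P_C\to M\to B$ is literally your $p_B$; since any two lifts of $p_C$ differ by a morphism factoring through $f$ and hence by an automorphism of $P_A\oplus P_C$, nothing is lost.
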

\begin{proof}
	It is easy to see that this lemma holds by \citep[Lemma 4.14]{JDP} and we can also see this lemma in \citep[Lemma 3.3]{JZP}.
\end{proof}

\begin{thm}\label{XMYY}
	Let $\xymatrix{ A\ar[r]&B\ar[r]&C\ar@{-->}[r]&}$ be an $\Mbe$-triangle in $\xi$, then there exist following inequalities.
	
	(1) $\Gpd B\leqslant \max\{\Gpd A, \Gpd C\}$;
	
	(2) $\Gpd A\leqslant \max\{\Gpd B, \Gpd C -1\}$;
	
	(3) $\Gpd C\leqslant \max\{\Gpd B, \Gpd A +1\}$.
\end{thm}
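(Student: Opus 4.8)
The plan is to establish the three bounds in the order (1), (2), (3), deducing (2) from (1) and (3) from (2); each bound is vacuous when its right-hand side is $\infty$, and the cases in which one of $A,B,C$ vanishes are degenerate (for instance $\Gpd B=-1$ forces $A=C=0$ by Lemma~\ref{FBZH1}), so I assume throughout that all dimensions in sight are finite and the objects nonzero. The one device used repeatedly is the following: given the $\Mbe$-triangle $A\to B\to C\dashrightarrow$ and a $\xi$-projective presentation $K\to P\to C\dashrightarrow$ of $C$ (available since $\Mcc$ has enough $\xi$-projectives), apply Lemma~\ref{BH}(1) to these two $\Mbe$-triangles with common third term $C$. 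This yields an object $M$ together with $\Mbe$-triangles $A\to M\to P\dashrightarrow$ and $K\to M\to B\dashrightarrow$, both in $\xi$ by closure under base change; since $P\in\P$ the first splits by Remark~\ref{REM1}(2), so $M\simeq A\oplus P$ and one obtains the crucial $\Mbe$-triangle $K\to A\oplus P\to B\dashrightarrow$ in $\xi$, in which $K$ is a first $\xi$-syzygy of $C$.

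Before using this I would record one auxiliary fact: for $P\in\P$ one has $\Gpd(X\oplus P)=\Gpd X$. The inequality $\leqslant$ is Lemma~\ref{HZG}; for $\geqslant$ I argue by induction on $\Gpd(X\oplus P)$, the base case being Proposition~\ref{CLOD}, and the inductive step comparing a first $\xi$-syzygy of $X$ with a \emph{good} first $\xi$-syzygy of $X\oplus P$ (one of dimension one less, furnished by the corollary preceding Lemma~\ref{FBZH}) by means of Schanuel's Lemma (Corollary~\ref{Sch}) together with Lemma~\ref{HZG}. A consequence I will use freely is that every first $\xi$-syzygy $K$ of an object $X$ with $\Gpd X\geqslant1$ satisfies $\Gpd K=\Gpd X-1$, independently of the chosen presentation. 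With this in hand, (1) is proved by induction on $n=\max\{\Gpd A,\Gpd C\}$: for $n=0$ both $A,C\in\GP$, hence $B\in\GP$ by the extension-closedness in Lemma~\ref{ABC}; for $n\geqslant1$ the Horseshoe Lemma~\ref{HS} supplies compatible $\xi$-projective resolutions whose first syzygies fit into an $\Mbe$-triangle $K_A\to K_B\to K_C\dashrightarrow$ in $\xi$ with $\Gpd K_A,\Gpd K_C\leqslant n-1$, so $\Gpd K_B\leqslant n-1$ by the induction hypothesis and $\Gpd B\leqslant n$ follows from the presentation $K_B\to P^0_B\to B\dashrightarrow$.

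For (2), I apply (1) to the crucial triangle $K\to A\oplus P\to B\dashrightarrow$, getting $\Gpd(A\oplus P)\leqslant\max\{\Gpd K,\Gpd B\}$, whence $\Gpd A\leqslant\max\{\Gpd K,\Gpd B\}$ by the summand invariance above. Since $\Gpd K\leqslant\max\{\Gpd C-1,0\}$ (the value $\Gpd K\leqslant0$ in the case $\Gpd C=0$ coming directly from Lemma~\ref{ABC}) and $\Gpd B\geqslant0$, this gives $\Gpd A\leqslant\max\{\Gpd B,\Gpd C-1\}$. For (3), I apply the already-proved (2) to the \emph{same} triangle $K\to A\oplus P\to B\dashrightarrow$, obtaining $\Gpd K\leqslant\max\{\Gpd(A\oplus P),\Gpd B-1\}\leqslant\max\{\Gpd A,\Gpd B-1\}$ via Lemma~\ref{HZG}; as $K$ is a first $\xi$-syzygy of $C$ we have $\Gpd C\leqslant\Gpd K+1\leqslant\max\{\Gpd A+1,\Gpd B\}$.

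The main obstacle is organizing the logical dependencies to avoid circularity: the pullback construction expresses both (2) and (3) through a triangle whose outer terms carry a spurious $\xi$-projective summand, so the whole argument rests on the invariance $\Gpd(X\oplus P)=\Gpd X$ and, through it, on the well-definedness of syzygy dimensions via Schanuel's Lemma. Proving this invariance cleanly, and checking that (1) can be secured \emph{independently} through the Horseshoe Lemma rather than being read off from (3), is the delicate point; the remaining manipulations are routine diagram chases in the $\Mbe$-triangle calculus. I would also verify at the outset, using Lemma~\ref{FBZH1}, that the degenerate cases collapse as claimed and introduce no unexpected zero objects.
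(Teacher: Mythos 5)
Your proof is correct, and for part (1) it is essentially the paper's proof: both run the Horseshoe Lemma \ref{HS} to produce an $\Mbe$-triangle of $\xi$-syzygies and finish with Lemma \ref{ABC} (the paper passes directly to the $t$-th syzygy, citing \citep[Proposition 5.2]{JDP}, where you induct on $\max\{\Gpd A,\Gpd C\}$ --- a cosmetic difference). For (2) and (3) you take a genuinely different route. The paper proves (2) by pulling back (Lemma \ref{BH}) along a deflation $G\to C$ with $G\in\GP$ and kernel of dimension $\leqslant\Gpd C-1$, reading $\Gpd A=\Gpd M$ off the non-split triangle $A\to M\to G$ via \citep[Lemma 5.1]{JDP}, and it proves (3) by a separate $\mathrm{(ET4)^{op}}$ construction on a $\mathcal{G}$projective presentation of $B$, again reducing to (1). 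You instead pull back along a $\xi$-projective deflation $P\to C$, so the side triangle splits and everything is concentrated in the single triangle $K\to A\oplus P\to B$; (2) then follows from (1), and (3) from (2) applied to that same triangle. What your version buys is economy: one diagram, no appeal to $\mathrm{(ET4)^{op}}$, and (3) for free once (2) is known. What it costs is that the whole argument leans on the invariance $\Gpd(X\oplus P)=\Gpd X$ and on the presentation-independence of syzygy dimensions, which you must assemble from Proposition \ref{CLOD}, Lemma \ref{HZG}, Schanuel (Corollary \ref{Sch}) and the corollary following Lemma \ref{HZG}; your inductive sketch of that invariance is sound, and the paper is in no position to object, since it outsources exactly the same facts to \citep[Lemma 5.1 and Proposition 5.2]{JDP}. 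Your dispatch of the degenerate zero-object cases is no more and no less careful than the paper's blanket assumption that the right-hand sides are finite, so I see no genuine gap.
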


\begin{proof}
	We always assume that the right side of above inequalities are finite, because that is trivial when they are  infinite.
	
	(1) Let $\Gpd A\leqslant n, \  \Gpd C\leqslant m, \  t=\max\{m,n\}$. And let
	\begin{gather*}
	\xymatrix{\cdots\ar[r]&P_A^i\ar[r]&P_A^{i-1}\ar[r]&{\cdots}\ar[r]&P^0_A\ar[r]&0},\\
	\xymatrix{\cdots\ar[r]&P_C^i\ar[r]&P_C^{i-1}\ar[r]&{\cdots}\ar[r]&P^0_C\ar[r]&0}
	\end{gather*}
	are $\xi$-projective resolutions of $A$ and $C$, respectively. Then we have following commutative diagram by Horseshoe Lemma and \citep[Lemma 4.14]{JDP}.
	$$
	\xymatrix{
		{K_A^t}\ar[r]^{f}\ar[d]^{g_A^t}&{K_B^t}\ar[r]\ar[d]^{g_B^t}&{K_C^t}\ar@{-->}[r]\ar[d]&\\
		{P_A^{t-1}}\ar[r]^l\ar[d]&{P_A^{t-1}\oplus P_C^{n-1}}\ar[d]\ar[r]&{P_C^{t-1}}\ar@{-->}[r]\ar[d]&\\
		{\vdots}\ar[d]&{\vdots}\ar[d]&{\vdots}\ar[d]&\\
		{P_A^{0}}\ar[r]\ar[d]&{P_A^{0}\oplus P_C^{0}}\ar[d]\ar[r]&{P_C^{0}}\ar@{-->}[r]\ar[d]&\\
		A\ar[r]&B\ar[r]&C\ar@{-->}[r]&\\
	}
	$$
	Where $K_A^t,K_B^t$ and $ K_C^t$ are $t$th $\xi$-syzygy of $A,B$ and $C$, respectively.
	Then $f$ is $\xi$-inflation by Proposition \ref{INDE}, since $g_B^tf=lg_A^t$ with $l$ and $g_A^t$ being $\xi$-inflation.  It is easy to check that the $\Mbe$-triangle $\xymatrix{K_A^t\ar[r]&K_B^t\ar[r]&K_C^t\ar@{-->}[r]&}$ is isomorphism to an $\Mbe$-triangle in $\xi$ by \citep[Corollary 3.6(3)]{HY}, hence it is an $\Mbe$-triangle in $\xi$.
	Note that $\Gpd A\leqslant t$ and $\Gpd C\leqslant t$. Then $K_A^t$ and $K_C^t$ are $\Gproj$ by \citep[Proposition 5.2]{JDP}. So one can get that $K_C^t$ is $\Gproj$ by Lemma \ref{ABC} and therefore there exists that
	$$\Gpd B\leqslant t=\max\{\Gpd A, \Gpd C\}$$
	by definition of $\Gproj$ dimension.
	
	(2) Let $\Gpd B\leqslant n, \Gpd C\leqslant m$ and $t=\max\{m-1,n\}$. Then there exists an $\Mbe$-triangle $\xymatrix{K\ar[r]&G\ar[r]&C\ar@{-->}[r]&}$ in $\xi$ where $G\in \GP$ and $\Gpd K\leqslant m-1$. By Lemma \ref{BH} there is a following commutative diagram:
	$$\xymatrix{
		&K\ar@{=}[r]\ar[d]&K\ar[d]\\
		A\ar[r]\ar@{=}[d]&M\ar[d]\ar[r]&G\ar[d]\ar@{-->}[r]&\\
		A\ar[r]&B\ar@{-->}[d]\ar[r]&C\ar@{-->}[d]\ar@{-->}[r]&\\
		&&
	}
	$$
	Then $\xymatrix{A\ar[r]&M\ar[r]&G\ar@{-->}[r]&}$ and  $\xymatrix{K\ar[r]&M\ar[r]&B\ar@{-->}[r]&}$ are both $\Mbe$-triangles in $\xi$ since $\xi$ is closed under base change. By (1) we have $\Gpd M\leqslant t$. Because of $G\in GP$ ,then $\Gpd A=\Gpd M\leqslant t$ by \citep[Lemma 5.1]{JDP}. That is to say
	$$\Gpd A\leqslant \max\{\Gpd B, \Gpd C -1\}.$$
	
	(3) Let $\Gpd A\leqslant m,\Gpd B\leqslant n$, and $t=\max\{m+1,n\}$.
	Then there exists an $\Mbe$-triangle $\xymatrix{K\ar[r]&G\ar[r]&B\ar@{-->}[r]&}$ in $\xi$ where $G\in \GP$ and $\Gpd K\leqslant n-1$. By $\rm{(ET4)^{op}}$, there exists following diagram:
	$$\xymatrix{
		K\ar@{=}[d]\ar[r]&D\ar[d]\ar[r]&A\ar[d]\ar@{-->}[r]&\\
		K\ar[r]&G\ar[d]\ar[r]&B\ar[d]\ar@{-->}[r]&\\
		&C\ar@{-->}[d]\ar@{=}[r]&C\ar@{-->}[d]&\\
		&&
	}
	$$
	Then the $\Mbe$-triangle $\xymatrix{K\ar[r]&D\ar[r]&A\ar@{-->}[r]&}$ is in $\xi$ since $\xi$ is closed under base change. It is easy to see that $\xymatrix{D\ar[r]&G\ar[r]&C\ar@{-->}[r]&}$ is $\Mcc(\P,-)$-exact by diagram chasing, so the $\Mbe$-triangle $\xymatrix{D\ar[r]&G\ar[r]&C\ar@{-->}[r]&}$ is in $\xi$.
	
	Because $\Gpd K \leqslant n-1, \Gpd A\leqslant m$, one can get  $\Gpd D\leqslant t-1 $ by (1). So $\Gpd C\leqslant t$ i.e.
	$$\Gpd C\leqslant \max\{\Gpd B, \Gpd A +1\}.$$
	So the proof was completed.
\end{proof}

\begin{cor}
	(1) Let $\xymatrix{ A\ar[r]&B\ar[r]&C\ar@{-->}[r]&}$ be an $\Mbe$-triangle in $\xi$. If the $\xi$-$\mathcal{G}$projective dimension for the two of $A,B$ and $C$ are finite, then so is the left one.
	
	(2) Let $A,B\in\Mcc$, then $\Gpd (A\oplus B)\leqslant \max\{\Gpd A, \Gpd B\}$.
\end{cor}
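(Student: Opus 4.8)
The plan is to derive both parts directly from the three inequalities established in Theorem \ref{XMYY}, namely $\Gpd B\leqslant\max\{\Gpd A,\Gpd C\}$, $\Gpd A\leqslant\max\{\Gpd B,\Gpd C-1\}$, and $\Gpd C\leqslant\max\{\Gpd B,\Gpd A+1\}$. Since these inequalities already control each of the three dimensions in terms of the other two, no new categorical machinery should be needed; the corollary is essentially a bookkeeping consequence.

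For part (1), I would argue by cases according to which two of the dimensions are assumed finite. If $\Gpd A$ and $\Gpd C$ are finite, then the right-hand side $\max\{\Gpd A,\Gpd C\}$ is finite, so inequality (1) of Theorem \ref{XMYY} forces $\Gpd B$ to be finite. Symmetrically, if $\Gpd B$ and $\Gpd C$ are finite, inequality (2) gives $\Gpd A\leqslant\max\{\Gpd B,\Gpd C-1\}<\infty$; and if $\Gpd A$ and $\Gpd B$ are finite, inequality (3) gives $\Gpd C\leqslant\max\{\Gpd B,\Gpd A+1\}<\infty$. In each case the remaining dimension is bounded by a finite quantity and hence is finite, which is exactly the claim.

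For part (2), I would exhibit a suitable $\mathbb{E}$-triangle to which inequality (1) applies. Take the split $\mathbb{E}$-triangle $\xymatrix{A\ar[r]&A\oplus B\ar[r]&B\ar@{-->}[r]&}$ associated to the split $\mathbb{E}$-extension $0\in\mathbb{E}(B,A)$. Because $\xi$ is a proper class, we have $\Delta_0\subseteq\xi$ by Definition \ref{ZL}(1), so this split $\mathbb{E}$-triangle lies in $\xi$. Applying inequality (1) of Theorem \ref{XMYY} to it yields $\Gpd(A\oplus B)\leqslant\max\{\Gpd A,\Gpd B\}$, as desired.

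I do not anticipate a genuine obstacle here, since everything reduces to reading the inequalities of Theorem \ref{XMYY} in the correct direction. The only point requiring slight care is the case analysis in part (1): one must verify that the two finiteness hypotheses feed into an inequality whose right-hand side involves precisely those two quantities, so that the bound on the third dimension is finite. The observation that the split triangle belongs to $\xi$ (via $\Delta_0\subseteq\xi$) is the small structural fact that makes part (2) legitimate.
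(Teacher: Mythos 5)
Your proposal is correct and follows exactly the route the paper intends: the paper's proof is simply ``It is obvious from Theorem \ref{XMYY},'' and your case analysis for (1) together with the split $\mathbb{E}$-triangle $A\rightarrow A\oplus B\rightarrow B$ (in $\xi$ since $\Delta_0\subseteq\xi$) for (2) is just the explicit unwinding of that remark.
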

\begin{proof}
	It is obvious from the Theorem \ref{XMYY}.
\end{proof}

The \emph{$\xi$-injective} objects and \emph{$\xi\text{-}\mathcal{G}\text{injective}$} objects 
are defined by the dual  of $\xi$-projective objects and $\Gproj$ objects respectively. 
 All the results which are mentioned in the previous section concerning $\xi$-projective objects 
 and $\Gproj$ objects have $\xi$-injective  objects and $\xi\text{-}\mathcal{G}\text{injective}$
  objects counterparts; hence, the statements and their proofs of the dual results on
    $\xi$-injective  objects and $\xi\text{-}\mathcal{G}\text{injective}$ objects
are omitted in this thesis.

We denote by $\mathcal{I}(\xi)$ (resp.$\Mcg\mathcal{I}(\xi)$ ) the full subcategory of $\xi$-injective (resp.$\xi$-$\Mcg$injective) objects  in $\Mcc$, and use $\widehat{\mathcal{I}}(\xi)$ (resp. $\widehat{\mathcal{GI}}(\xi)$) to denote the full subcategory of $\Mcc$ whose objects have finite $\xi$-injective (resp. $\xi$-$\mathcal{G}$injective) dimension.

\section{$\xi$-$n$-strongly $\mathcal{G}$projective objects}
\quad~In this section, we introduce some special $\Gproj$ objects in extriangulated category  
which are called  $\xi$-$n$-strongly $\mathcal{G}$projective objects for any integer
 $n\geqslant1$. We study the relation between
  $\xi$-$m$-strongly $\mathcal{G}$projective objects  and $\xi$-$n$-strongly 
  $\mathcal{G}$projective objects whenever $m\neq n$, and give some 
  equivalent characterizations of $\xi$-$n$-strongly $\mathcal{G}$projective objects.

Throughout this section, we assume that
 $\Mcc=(\Mcc,\Mbe,\mathfrak{s})$   has enough $\xi$-projectives and $\xi$-injectives
 satisfying Condition(WIC). We also assume that $m$ and $n$ are 
  positive integers and $n\leqslant m$.
\begin{defn}\citep[Definition 3.2]{JZP}
	Let $A$ and $B$ be objects in $\mathcal{C}$.
	
	(1) If we choose a $\xi$-projective resolution $\xymatrix{\mathbf{P}\ar[r]&A}$ of $A$, then for any  integer $n\geqslant 0$, the $\xi$-cohomology groups $\xi \text{xt}_{\mathcal{P}(\xi)}^n(A, B)$ are defined as
	$$
	\xi \text{xt}_{\mathcal{P}(\xi)}^n(A, B)=H^n(\mathcal{C}(\mathbf{P},B)).
	$$
	
	(2) If we choose a $\xi$-injective coresolution $\xymatrix{B\ar[r]&\mathbf{I}}$ of $B$, then for any  integer $n\geqslant 0$, the $\xi$-cohomology groups $\xi \text{xt}_{\mathcal{I}(\xi)}^n(A, B)$ are defined as
	$$
	\xi \text{xt}_{\mathcal{I}(\xi)}^n(A, B)=H^n(\mathcal{C}(A,\mathbf{I})).
	$$
	
	Then there exists an isomorphism $\xi \text{xt}_{\mathcal{P}(\xi)}^n(A, B)\backsimeq \xi \text{xt}_{\mathcal{I}(\xi)}^n(A, B)$, which is denoted by $\xi \text{xt}_{\xi}^n(A,B)$.
	\end{defn}
	\begin{lem}\label{LZHL}\citep[Lemma 3.4]{JZP}
		If $\xymatrix{
			A\ar[r]^x &B\ar[r]^y &C\ar@{-->}[r]^{\delta}&}$ is an $\mathbb{E}$-triangle in $\xi$, then for any objects $X$  in $\mathcal{C}$, we have the following long exact sequences in $\mathbf{Ab}$
		$$
		\xymatrix{
		0\ar[r]&\ext^0(X,A)\ar[r]&\ext^0(X,B)\ar[r]&\ext^0(X,C)\ar[r]&\ext^1(X,A)\ar[r]&\cdots
		}
		$$
		and
		$$
		\xymatrix{
		0\ar[r]&\ext^0(C,X)\ar[r]&\ext^0(B,X)\ar[r]&\ext^0(A,X)\ar[r]&\ext^1(C,X)\ar[r]&\cdots
		}
		$$
		For any objects $A$ and $B$, there is always a natural map $\delta: \mathcal{C}\rightarrow\ext^0(A,B)$, which is an isomorphism if $A\in \P$ or $B\in \mathcal{I}(\xi)$.
		\end{lem}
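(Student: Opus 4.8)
The plan is to deduce both long exact sequences from the long exact cohomology sequence of a short exact sequence of cochain complexes, where the short exactness is supplied directly by the defining properties of $\xi$-projective and $\xi$-injective objects. For the first sequence, which is covariant in the second slot, I would fix a $\xi$-projective resolution $\mathbf{P}\colon\cdots\to P_1\to P_0\to X\to 0$ of $X$, so that by definition $\ext^n(X,-)=H^n(\Mcc(\mathbf{P},-))$. Applying $\Mcc(P_i,-)$ to the given $\Mbe$-triangle $A\stackrel{x}{\to}B\stackrel{y}{\to}C\dashrightarrow$ in $\xi$, the definition of a $\xi$-projective object makes $0\to\Mcc(P_i,A)\to\Mcc(P_i,B)\to\Mcc(P_i,C)\to 0$ exact for every $i\geqslant 0$. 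As these maps are induced by $x$ and $y$, they commute with the differentials of $\mathbf{P}$ and assemble into a short exact sequence of cochain complexes $0\to\Mcc(\mathbf{P},A)\to\Mcc(\mathbf{P},B)\to\Mcc(\mathbf{P},C)\to 0$, concentrated in non-negative degrees. Its long exact cohomology sequence is precisely the first display, the leading term $0\to\ext^0(X,A)$ appearing because all three complexes vanish in negative degrees.

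For the second sequence, which is contravariant in the first slot, I would argue dually: fix a $\xi$-injective coresolution $\mathbf{I}\colon 0\to X\to I^0\to I^1\to\cdots$ of $X$, so that $\ext^n(-,X)=H^n(\Mcc(-,\mathbf{I}))$, and apply $\Mcc(-,I^j)$ to the same $\Mbe$-triangle. The defining property of a $\xi$-injective object now yields the short exact sequences $0\to\Mcc(C,I^j)\to\Mcc(B,I^j)\to\Mcc(A,I^j)\to 0$, hence a short exact sequence of complexes $0\to\Mcc(C,\mathbf{I})\to\Mcc(B,\mathbf{I})\to\Mcc(A,\mathbf{I})\to 0$ whose cohomology long exact sequence is the second display. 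Note that each of the two sequences uses only one of the two definitions of $\ext$; the isomorphism between the projective and injective definitions recorded in the preceding definition is invoked merely to write both sequences with the single symbol $\ext$.

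Finally, for the natural map $\delta\colon\Mcc(A,B)\to\ext^0(A,B)$ induced by the augmentation $\epsilon\colon P_0\to A$ via $\phi\mapsto\phi\epsilon$ (equivalently by the coaugmentation $B\to I^0$ in the injective picture), I would treat the two cases by passing to trivial (co)resolutions. When $A\in\P$, the split $\Mbe$-triangle $0\to A\stackrel{1_A}{\to}A\dashrightarrow$ lies in $\xi$ because $\Delta_0\subseteq\xi$, so $\cdots\to 0\to A\stackrel{1_A}{\to}A\to 0$ is a legitimate $\xi$-projective resolution; with it $\Mcc(\mathbf{P},B)$ is concentrated in degree $0$ and equal to $\Mcc(A,B)$, giving $\ext^0(A,B)=\Mcc(A,B)$ with $\delta$ the identity. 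When $B\in\mathcal{I}(\xi)$, the dual trivial $\xi$-injective coresolution $0\to B\stackrel{1_B}{\to}B\to 0$ gives the same conclusion. I expect the only delicate point here to be the compatibility bookkeeping: to conclude that the intrinsically defined $\delta$ is an isomorphism one must invoke the independence of $\ext^0$ from the chosen resolution (the comparison theorem, standard in this relative setting) so that the computation with the trivial resolution transports to $\delta$. The short-exactness inputs driving the two long exact sequences, by contrast, are immediate from the definitions of $\xi$-projective and $\xi$-injective objects, so I anticipate no obstacle there.
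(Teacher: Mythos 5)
Your argument is correct and is essentially the standard one: the paper states this lemma as a citation of \citep[Lemma 3.4]{JZP} without reproducing a proof, and the cited proof proceeds exactly as you do, by applying $\mathcal{C}(P_i,-)$ (resp.\ $\mathcal{C}(-,I^j)$) to the $\mathbb{E}$-triangle in $\xi$, using the defining exactness of $\xi$-projectives (resp.\ $\xi$-injectives) to obtain a short exact sequence of complexes, and taking the long exact cohomology sequence, with the two cases for $\delta$ handled via trivial (co)resolutions and the comparison theorem. No gaps.
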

	
\begin{defn}\label{strongly}
	Let $n\geqslant1$ be a integer. An object $A\in\Mcc$ is called \emph{$\xi$-$n$-strongly $\mathcal{G}$projective object} ($\xi$-$n$-SG-projective for short) if there exists a $\xi$-exact complex
	$$
	0\longrightarrow A\stackrel{f_n}\longrightarrow P_{n-1}\stackrel{f_{n-1}}\longrightarrow P_{n-2}\longrightarrow{\cdots}\longrightarrow P_1\stackrel{f_1}\longrightarrow P_0\stackrel{f_0}\longrightarrow A\longrightarrow 0
	$$
	with $P_i\in\mathcal{P}(\xi)$ for any $0\leqslant i\leqslant n-1$, which is $\fbzh$. In particular, if $n=1$, we say $A$ is $\xi$-SG-projective.
	
	For any $n\geqslant1$, we denote the full subcategory of all the $\xi$-$n$-SG-projectives by $n$-$\mathcal{SGP}(\xi)$, and denote the full subcategory of all the $\xi$-SG-projectives by $\mathcal{SGP}(\xi)$.
\end{defn}

\begin{rem}\label{REM2}
	(1) For any $n \geqslant 1$, we have $$\mathcal{P}(\xi)\subseteq \mathcal{SGP}(\xi)\subseteq n\text{-}\mathcal{SGP}(\xi)\subseteq\GP.$$
	
	(2) For any $A\in n\text{-}\SGP$, there exists a complete $\mathcal{P}(\xi)$-exact complex
	$$
	\mathbf{A}:\   0\longrightarrow A\stackrel{f_n}\longrightarrow P_{n-1}\stackrel{f_{n-1}}\longrightarrow P_{n-2}\longrightarrow{\cdots}\longrightarrow P_1\stackrel{f_1}\longrightarrow P_0\stackrel{f_0}\longrightarrow A\longrightarrow 0
	$$
	and for each $0\leqslant i\leqslant n-1$, there exists a $\fbzh$ resolution $\Mbe$-triangle of $$\mathbf{A}~: \xymatrix{ K_{i+1}\ar[r]&P_i\ar[r]&K_i\ar@{-->}[r]&}$$ where $K_n=K_0=A$. Then for any $0\leqslant i\leqslant n$, $K_n$ is also $\xi$-$n$-SG-projective.
\end{rem}
\begin{proof}
	It is an immediate consequence from definition.
\end{proof}
\begin{prop}\label{PROP2}
	For any $n\geqslant1$, $\nSGP$ is closed under finite direct sums.
\end{prop}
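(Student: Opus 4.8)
The plan is to reduce to the case of two summands and then take direct sums of the defining complexes term by term. Since being closed under finite direct sums is equivalent to being closed under the sum of two objects, I would take $A,B\in\nSGP$ and show $A\oplus B\in\nSGP$. By Definition \ref{strongly}, choose $\fbzh$ $\xi$-exact complexes
$$0\longrightarrow A\stackrel{f_n}\longrightarrow P_{n-1}\longrightarrow\cdots\longrightarrow P_0\stackrel{f_0}\longrightarrow A\longrightarrow0$$
and
$$0\longrightarrow B\stackrel{g_n}\longrightarrow Q_{n-1}\longrightarrow\cdots\longrightarrow Q_0\stackrel{g_0}\longrightarrow B\longrightarrow0$$
with all $P_i,Q_i\in\P$, together with their resolution $\Extri$s $K^A_{i+1}\to P_i\to K^A_i\dashrightarrow$ and $K^B_{i+1}\to Q_i\to K^B_i\dashrightarrow$ in $\xi$, where $K^A_n=K^A_0=A$ and $K^B_n=K^B_0=B$.

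The candidate complex for $A\oplus B$ is the termwise direct sum
$$0\longrightarrow A\oplus B\longrightarrow P_{n-1}\oplus Q_{n-1}\longrightarrow\cdots\longrightarrow P_0\oplus Q_0\longrightarrow A\oplus B\longrightarrow0,$$
whose differentials are the direct sums of those of the two complexes. First I would record that each term $P_i\oplus Q_i$ lies in $\P$, since $\P$ is closed under direct sums by Remark \ref{REM1}(1). Next I would check that this complex is $\xi$-exact by exhibiting its resolution $\Extri$s: the direct sums $K^A_{i+1}\oplus K^B_{i+1}\to P_i\oplus Q_i\to K^A_i\oplus K^B_i\dashrightarrow$ lie in $\xi$ because $\xi$ is closed under finite coproducts (Definition \ref{ZL}(1)), and the factorization $d_i=g_{i-1}f_i$ of each differential is inherited from the two summand complexes. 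Note that the $n$th syzygy $K^A_n\oplus K^B_n$ equals $A\oplus B$, matching the $0$th syzygy, so the complex has the correct periodic shape required by Definition \ref{strongly}.

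The remaining point is the $\fbzh$ condition, which is where a little care is needed but no genuine obstacle arises. For each $i$ and each $W\in\P$, applying $\Mcc(-,W)$ to the direct-sum $\Extri$ yields, via the natural isomorphism $\Mcc(X\oplus Y,W)\cong\Mcc(X,W)\oplus\Mcc(Y,W)$, exactly the direct sum of the two sequences obtained by applying $\Mcc(-,W)$ to the corresponding resolution $\Extri$s of $\mathbf{A}$ and $\mathbf{B}$. Both of these are short exact because $\mathbf{A}$ and $\mathbf{B}$ are $\fbzh$, and a finite direct sum of short exact sequences of abelian groups is again short exact; hence each resolution $\Extri$ of the direct-sum complex is $\fbzh$. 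This shows $A\oplus B\in\nSGP$, and iterating gives closure under arbitrary finite direct sums. The only step demanding attention is the bookkeeping of the resolution $\Extri$s, so that the direct-sum complex simultaneously satisfies both the membership-in-$\xi$ and the $\fbzh$ requirements; everything else is formal, following from the additivity of $\Mcc(-,W)$ and the closure properties of $\P$ and $\xi$.
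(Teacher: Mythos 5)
Your proposal is correct and follows essentially the same route as the paper: form the termwise direct sum of the defining complete $\Mcp(\xi)$-exact complexes, observe that the terms stay in $\P$, that the resolution $\Extri$s stay in $\xi$ by closure under finite coproducts, and that $\fbzh$ness is preserved by additivity of $\Mcc(-,W)$. The paper's own proof is just a terser version of the same argument, so no comparison is needed.
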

\begin{proof}
	Let $\{A_j\}_{j\leqslant m}$ be a set of $\xi$-$n$-SG-projectives in $\Mcc$ with integer $j\geqslant1$. Then for any $j\leqslant m$, there exists a complete  $\P$-exact complex:
	$$0\longrightarrow A_j\longrightarrow P_{n-1}^{(j)}\longrightarrow\cdots \longrightarrow P_0^{(j)}\longrightarrow A_j\longrightarrow0$$
	with $P_j^{(j)}\in \P$ for any $0\leqslant i \leqslant n-1$. So we get an $\xi$-exact complex:
	$$0\longrightarrow\oplus_{j\leqslant m}A_j\longrightarrow \oplus_{j\leqslant m}P_{n-1}^{(j)}\longrightarrow\cdots \longrightarrow\oplus_{j\leqslant m}P_0^{(j)}\longrightarrow\oplus_{j\leqslant m}A_j\longrightarrow0.$$
	Because $ \oplus_{j\leqslant m}P_{n-1}^{(j)}\text{,}\cdots\text{,} \oplus_{j\leqslant m}P_0^{(j)} $ are $\xi$-projectives and the obtained $\xi$-exact complex is still $\fbzh$. Then we completed this proof.
\end{proof}
\begin{lem}\label{Lem1}
	If $n\mid m$, then $n$-$\SGP\subseteq m$-$\SGP$.
\end{lem}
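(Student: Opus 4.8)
The plan is to exploit the periodicity of the witnessing complex. Since $n\mid m$, I write $m=kn$ for a positive integer $k$; this divisibility is the crux, since it forces the object $A$ to recur at exactly the right positions so that a period-$n$ witness can be \emph{unrolled} into a period-$m$ one. First I would take an arbitrary $A\in\nSGP$ and invoke Remark \ref{REM2}(2) to fix its defining data: a complete $\Mcp(\xi)$-exact complex
$$0\longrightarrow A\stackrel{f_n}\longrightarrow P_{n-1}\longrightarrow\cdots\longrightarrow P_0\stackrel{f_0}\longrightarrow A\longrightarrow 0$$
together with its resolution $\Mbe$-triangles $K_{i+1}\longrightarrow P_i\longrightarrow K_i\dashrightarrow$ for $0\leqslant i\leqslant n-1$, all lying in $\xi$ and being $\fbzh$, and with the crucial identification $K_n=K_0=A$.

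Then I would build the period-$m$ complex by splicing $k$ consecutive copies of the projective strand $P_{n-1}\longrightarrow\cdots\longrightarrow P_0$, gluing the tail of each copy to the head of the next through the recurring object $A=K_0=K_n$. Concretely, for $0\leqslant j\leqslant m$ I set $Q_j:=P_{j\bmod n}$ and $L_j:=K_{j\bmod n}$, so that $L_0=L_m=A$, and I take as the resolution $\Mbe$-triangles of the new complex the triangles $L_{j+1}\longrightarrow Q_j\longrightarrow L_j\dashrightarrow$ for $0\leqslant j\leqslant m-1$. By the periodic choice of indices each of these is literally one of the original $n$ triangles, and the differentials factor through the repeated syzygies $L_j$ exactly as demanded in the definition of a $\xi$-exact complex; the resulting sequence
$$0\longrightarrow A\longrightarrow Q_{m-1}\longrightarrow\cdots\longrightarrow Q_0\longrightarrow A\longrightarrow 0$$
is therefore a genuine $\xi$-exact complex with $A$ at both ends.

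Finally I would check the two defining requirements of $\mSGP$: every term $Q_j=P_{j\bmod n}$ lies in $\P$, and every resolution $\Mbe$-triangle, being one of the original $n$ triangles, lies in $\xi$ and is $\fbzh$. Hence the new complex is complete $\Mcp(\xi)$-exact, which yields $A\in\mSGP$ and establishes $\nSGP\subseteq\mSGP$. The one step that needs care is the bookkeeping at the junctions between successive copies, where the spliced differential $P_0\stackrel{f_0}\longrightarrow A\stackrel{f_n}\longrightarrow P_{n-1}$ must be seen to be compatible with the internal appearance of $A$ as a syzygy in Definition \ref{strongly}; but this is immediate from $K_0=K_n=A$, so I expect no genuine obstacle. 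The hypothesis $n\mid m$ is precisely what makes the gluing close up after exactly $k$ rounds.
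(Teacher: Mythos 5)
Your proof is correct and follows essentially the same route as the paper: both splice $k=m/n$ copies of the period-$n$ witnessing complex through the recurring object $A=K_0=K_n$ to produce a period-$m$ complete $\Mcp(\xi)$-exact complex. Your version is in fact more careful than the paper's about the index bookkeeping and about checking that each resolution $\Mbe$-triangle of the spliced complex is one of the original ones.
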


\begin{proof}
	Assume that $A\in \nSGP$. Then there exists a complete $\P$-exact complex
	$$
	0\longrightarrow A\stackrel{f_n}\longrightarrow P_{n-1}\stackrel{f_{n-1}}\longrightarrow P_{n-2}\longrightarrow{\cdots}\longrightarrow P_1\stackrel{f_1}\longrightarrow P_0\stackrel{f_0}\longrightarrow A\longrightarrow 0
	$$
	If $n\mid m$, then we can get  a complete $\P$-exact complex
	$$
	0\longrightarrow A\stackrel{f_n}\longrightarrow \mathbf{P}_{m-1}\stackrel{f_nf_0}\longrightarrow \mathbf{P}_{m-2}\longrightarrow{\cdots}\longrightarrow \mathbf{P}_1\stackrel{f_nf_0}\longrightarrow \mathbf{P}_0\stackrel{f_0}\longrightarrow A\longrightarrow 0
	$$
	where $$\mathbf{P}_i=P_{n-1}\stackrel{f_{n-1}}\longrightarrow P_{n-2}\stackrel{f_{n-2}}\longrightarrow{\cdots}\stackrel{f_2} \longrightarrow P_1\stackrel{f_1}\longrightarrow P_0,\  i=0,1,\cdots m-1.$$
	So $A\in \mSGP$. Therefore,  $n$-$\SGP\subseteq m$-$\SGP$.
\end{proof}

\begin{prop}\label{PROP1}
	(1) If $n\mid m$, then $n$-$\SGP\cap m$-$\SGP$= $n$-$\SGP$.
	
	(2) If $n\nmid m$ and $m=kn+l$, where $k$ is a  positive integer and $0<l<n$. Then $$\nSGP\cap \mSGP\subseteq l\text{-}\SGP.$$
	
\end{prop}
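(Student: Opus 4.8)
Part (1) is a formal consequence of Lemma \ref{Lem1}. Since $n\mid m$, that lemma gives $\nSGP\subseteq\mSGP$, and intersecting a class with a larger class returns the smaller one, so $\nSGP\cap\mSGP=\nSGP$; no further work is needed.

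For part (2) the plan is to exploit the periodicity of complete $\P$-exact complexes together with Schanuel's Lemma (Corollary \ref{Sch}). First I would pass from $A\in\nSGP$ to $A\in kn\text{-}\SGP$ using Lemma \ref{Lem1} (legitimate because $n\mid kn$); this furnishes a complete $\P$-exact complex for $A$ whose $kn$-th $\xi$-syzygy, read off by splicing, is $A$ itself. On the other hand $A\in\mSGP$ provides, by Definition \ref{strongly} and Remark \ref{REM2}, a complete $\P$-exact complex $0\to A\to P_{m-1}\to\cdots\to P_0\to A\to 0$ together with its resolution $\Mbe$-triangles $K_{i+1}\to P_i\to K_i$ (with $K_0=K_m=A$), each of which is $\fbzh$. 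Here the $kn$-th $\xi$-syzygy of $A$ is $K_{kn}$, and since $m=kn+l$ the top $l$ of these triangles splice to a $\fbzh$ $\xi$-exact segment
\[
0\to A\to P_{m-1}\to\cdots\to P_{kn}\to K_{kn}\to 0
\]
of length $l$ with all middle terms in $\P$. Because $A$ and $K_{kn}$ are two $kn$-th $\xi$-syzygies of $A$, Corollary \ref{Sch} forces them to be $\xi$-projectively equivalent, i.e. $K_{kn}\oplus P\cong A\oplus P'$ for some $P,P'\in\P$.

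The next step is to close this segment into a period. I would form the direct sum of its left-most resolution triangle with the split triangle $0\to P'\xrightarrow{1}P'$, and of its right-most one with $0\to P\xrightarrow{1}P$; since $\xi$ is closed under finite coproducts and split triangles are $\fbzh$, the outcome is again a $\fbzh$ $\xi$-exact segment, now of the form
\[
0\to A\oplus P'\to P_{m-1}\oplus P'\to\cdots\to P_{kn}\oplus P\to K_{kn}\oplus P\to 0,
\]
whose two end terms are both isomorphic to $A\oplus P'$ and whose middle terms are $\xi$-projective. This exhibits a length-$l$ complete $\P$-exact complex with repeated end term $A\oplus P'$, so Definition \ref{strongly} yields $A\oplus P'\in l\text{-}\SGP$.

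Finally one must descend from $A\oplus P'$ to $A$, and this is the step I expect to be the main obstacle: Corollary \ref{Sch} only controls syzygies up to $\xi$-projective summands, so the closed-up complex is naturally attached to $A\oplus P'$ rather than to $A$. I would remove the summand by establishing the extriangulated analogue of Yang and Liu's theorem, namely that $A\oplus Q\in l\text{-}\SGP$ with $Q\in\P$ forces $A\in l\text{-}\SGP$. Note $A\in\GP$ as a direct summand of $A\oplus P'\in\GP$ by Proposition \ref{CLOD}; granting this, one splits the period-$l$ complex of $A\oplus P'$ along the summand $P'$, using that $\mathcal{C}(-,P')$ and $\mathbb{E}(-,P')$ behave well on $\fbzh$ $\Mbe$-triangles because $P'\in\P$, to manufacture a genuine period-$l$ $\fbzh$ complex $0\to A\to Q_{l-1}\to\cdots\to Q_0\to A\to 0$. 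This gives $A\in l\text{-}\SGP$, whence $\nSGP\cap\mSGP\subseteq l\text{-}\SGP$, completing the argument.
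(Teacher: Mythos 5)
Part (1) of your proposal coincides with the paper's. For part (2) your first half is also exactly the paper's route: reduce to $A\in\mSGP\cap kn\text{-}\SGP$ via Lemma \ref{Lem1}, take the length-$m$ complete $\P$-exact complex with resolution $\Mbe$-triangles $K_{i+1}\rightarrow P_i\rightarrow K_i$, and invoke Schanuel's Lemma to get that $A$ and $K_{kn}$ are $\xi$-projectively equivalent. Where you diverge is the endgame, and that is where your argument is incomplete. You pad both ends of the length-$l$ segment to obtain a period-$l$ complex for $A\oplus P'$ and then appeal to a stripping lemma ($A\oplus Q\in l\text{-}\SGP$ with $Q\in\P$ forces $A\in l\text{-}\SGP$), which you explicitly leave as a sketch. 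That lemma is true --- it is in substance Proposition \ref{tsdj} of the paper --- but the paper only proves it \emph{later}, by the two-diagram argument of Theorem \ref{hmy} (an application of (ET4) and of (ET4)$^{op}$, splitness of the triangle $P\rightarrow P_{n-1}\rightarrow Q_{n-1}$ deduced from Lemma \ref{FBZH} and Lemma \ref{FJ}, plus diagram chases to keep everything $\fbzh$). So as written your proof defers its decisive step rather than carrying it out, and inserted at this point in the paper it would amount to a forward reference.

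The paper's proof avoids the stripping lemma altogether by modifying only the last resolution triangle rather than both end terms. Applying Lemma \ref{BH}(1) to $K_{kn+1}\rightarrow P_{kn}\rightarrow K_{kn}$ and the split triangle $Q\rightarrow A\oplus P\rightarrow K_{kn}$ produces a $\fbzh$ triangle $K_{kn+1}\rightarrow B\rightarrow A\oplus P$ in $\xi$ with $B\simeq Q\oplus P_{kn}\in\P$; then (ET4)$^{op}$ applied to this triangle together with $A\rightarrow A\oplus P\rightarrow P$ peels off $P$ and yields a $\fbzh$ triangle $K_{kn+1}\rightarrow C\rightarrow A$ in $\xi$ with $C$ a direct summand of $B$, hence $C\in\P$. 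Splicing this onto $P_{m-1},\dots,P_{kn+1}$ gives a length-$l$ complete $\P$-exact complex with $A$ itself at both ends, so no summand remains to be removed. You should either adopt this direct construction or write out your stripping lemma in full; note that its proof uses essentially the same pair of diagrams, so your route relocates the work rather than saving it.
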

\begin{proof}
	(1) It is trivial by Lemma \ref{Lem1}.
	
	(2) By Lemma \ref{Lem1}, we have that $$\nSGP\cap\mSGP\subseteq \mSGP\cap kn\text{-}\SGP.$$
	Assume that $A\in \mSGP\cap kn\text{-}\SGP$. Then there exists a complete $\P$-exact complex
	$$
	0\longrightarrow A\longrightarrow P_{m-1}\longrightarrow P_{m-2}\longrightarrow{\cdots}\longrightarrow P_1\longrightarrow P_0\longrightarrow A\longrightarrow 0
	$$
	with $P_i\in\mathcal{P}(\xi)$ for any $0\leqslant i\leqslant m-1$. For each $0\leqslant i\leqslant m-1$, we have a $\fbzh$ $\Mbe$-triangle $\xymatrix{K_{i+1}\ar[r]&P_i\ar[r]&K_i\ar@{-->}[r]&}$ in $\xi$ where $K_m=K_0=A$, which is the resolution $\Mbe$-triangle of the complex. Because $A\in kn$-$\SGP$, $A$ and $K_{kn}$ are $\xi$-projectively equivalent, that is, there exists $\xi$-projectives $P$ and $Q$ in $\Mcc$, such that $A\oplus P\simeq Q\oplus K_{kn}$ by Schanuel's Lemma.
	
	First, consider the following commutative diagram by Lemma \ref{BH}.
	$$
	\xymatrix{
		&Q\ar@{=}[r]\ar[d]&Q\ar[d]&\\
		K_{kn+1}\ar[r]\ar@{=}[d]&B\ar[d]\ar[r]&A\oplus P\ar[d]\ar@{-->}[r]&\\
		K_{kn+1}\ar[r]&P_{kn}\ar@{-->}[d]\ar[r]&K_{kn}\ar@{-->}[r]\ar@{-->}[d]&\\
		&&
	}
	$$
	Then $Q\longrightarrow B\longrightarrow P_{kn}\dashrightarrow$ and $K_{kn+1}\longrightarrow B\longrightarrow A\oplus P\dashrightarrow$ are $\Mbe$-triangles in $\xi$ since $\xi$ is closed under base change. Note that $ Q\longrightarrow B\longrightarrow P_{kn}\dashrightarrow$ is split, then $B\simeq Q\oplus P_{kn}\in\P$. Applying the functor $\Mcc(-,\Mcp(\xi))$ to the above diagram, we can get that$K_{kn+1}\longrightarrow B\longrightarrow A\oplus P\dashrightarrow$ is $\fbzh$ by a simple diagram chasing.
	
	Next, consider the following commutative diagram by $\rm{(ET4)^{op}}$
	$$
	\xymatrix{
		K_{kn+1}\ar[r]\ar@{=}[d]&C\ar[d]\ar[r]&A\ar[d]\ar@{-->}[r]&\\
		K_{kn+1}\ar[r]&B\ar[d]\ar[r]&A\oplus P\ar@{-->}[r]\ar[d]&\\
		&P\ar@{=}[r]\ar@{-->}[d]&P\ar@{-->}[d]\\
		&&
	}
	$$
	where $K_{kn+1}\longrightarrow C\longrightarrow A\dashrightarrow$ is an $\Mbe$-triangle in $\xi$ since  $\xi$ is closed under base change, and  $C\longrightarrow B\longrightarrow P\dashrightarrow$ is in $\xi$ since it is split by Remark \ref{REM1}(2). Now, a simple diagram chasing shows that
	the  $\Mbe$-triangle $K_{kn+1}\longrightarrow C\longrightarrow A\dashrightarrow$ is $\fbzh$ with $C\in \P$.
	
	Thus we obtain a $\xi$-exact complex as follows
	$$0\longrightarrow A\longrightarrow P_{m-1}\longrightarrow\cdots\longrightarrow P_{kn+1}\longrightarrow C\longrightarrow A\longrightarrow0$$
	which is still $\fbzh$. That is to say  $A$ is in $l$-$\SGP$, hence
	$$\nSGP\cap \mSGP\subseteq l\text{-}\SGP.$$
\end{proof}

We use gcd$(m,n)$ to denote the greatest common divisor of $m$ and $n$, then we have:
\begin{thm}\label{STR1}
	$\mSGP\cap\nSGP=\text{gcd}(m,n)\text{-}\SGP$.
\end{thm}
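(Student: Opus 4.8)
The plan is to establish the two inclusions separately. Write $d=\gcd(m,n)$. The inclusion $\text{gcd}(m,n)\text{-}\SGP\subseteq\mSGP\cap\nSGP$ is immediate: since $d\mid m$ and $d\mid n$, Lemma \ref{Lem1} gives $d\text{-}\SGP\subseteq\mSGP$ and $d\text{-}\SGP\subseteq\nSGP$, and intersecting yields the claim. So the whole content sits in the reverse inclusion $\mSGP\cap\nSGP\subseteq\text{gcd}(m,n)\text{-}\SGP$.

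For this I would run an induction that mirrors the Euclidean algorithm, taking $\max(m,n)=m$ (equivalently $m+n$) as the induction parameter and using the standing hypothesis $n\leqslant m$. The key point is that Proposition \ref{PROP1} records exactly one division step. If $n\mid m$, then $d=n$ and Proposition \ref{PROP1}(1) directly gives $\mSGP\cap\nSGP=\nSGP=d\text{-}\SGP$; this disposes of the case $n\mid m$ (in particular the base case $m=n$) without appealing to the induction hypothesis. If $n\nmid m$, I would write $m=kn+l$ with $k\geqslant1$ and $0<l<n$; then Proposition \ref{PROP1}(2) gives $\mSGP\cap\nSGP\subseteq l\text{-}\SGP$, whence $\mSGP\cap\nSGP\subseteq\nSGP\cap l\text{-}\SGP$. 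Because $l<n<m$, the pair $(n,l)$ (with $l\leqslant n$) has strictly smaller maximum, so the induction hypothesis applies and yields $\nSGP\cap l\text{-}\SGP=\gcd(n,l)\text{-}\SGP$. The elementary identity $\gcd(m,n)=\gcd(n,l)$ then finishes the step.

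I expect essentially all of the real work to have been done already in Lemma \ref{Lem1} and Proposition \ref{PROP1}, where the complete $\mathcal{P}(\xi)$-exact complexes are spliced and the syzygy surgery is carried out; the present argument is a purely combinatorial reduction sitting on top of them. The only points requiring a little care are the set-up of the induction, where, since both indices change when one passes from $(m,n)$ to $(n,l)$, one must induct on the larger index rather than on a fixed one and must check that the hypothesis $n\leqslant m$ of Proposition \ref{PROP1} is inherited by $(n,l)$, which holds because $l<n$; and the observation that $n\nmid m$ together with $n\leqslant m$ forces $n<m$ (otherwise $n=m$ would give $n\mid m$), which guarantees that the reduced pair is genuinely smaller and the induction terminates.
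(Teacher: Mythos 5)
Your proposal is correct and follows essentially the same route as the paper: both directions rest on Lemma \ref{Lem1} for the easy inclusion and on Proposition \ref{PROP1} iterated along the Euclidean algorithm for the reverse one. The only difference is presentational — you package the paper's ``continuing the above procedure, after finite steps'' as a formal induction on $\max(m,n)$, checking that the reduced pair $(n,l)$ still satisfies the standing hypothesis, which is a slightly cleaner way of saying the same thing.
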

\begin{proof}
	If $n\mid m$, then this assertion follows from Proposition \ref{PROP1}(1).
	
	If $n\nmid m$, then we can assume that $m=k_0n+l_0$, where $k_0$ is a positive integer and $0<l_0<n$. By Proposition \ref{PROP1}(2), we can get that $$\mSGP\cap\nSGP\subseteq l_0\text{-}\SGP.$$
	If $l_0\nmid n$ and $n=k_1l_0+l_1$ with $0<l_1<l_0$, then by Proposition \ref{PROP1}(2) again, we have that
	$$\mSGP\cap\nSGP\subseteq \nSGP \cap l_0\text{-}\SGP\subseteq l_1\text{-}\SGP.$$
	continuing the above procedure, after finite steps, there exists a positive integer $t$ such that $l_t=k_{t+2}l_{t+1}$ and $ l_{t+1}=\text{gcd}(m,n)$. Then we have
	\begin{equation*}
	\begin{split}
	\mSGP\cap\nSGP& \subseteq l_t\text{-}\SGP\cap l_{t+1}\text{-}\SGP\\
	&=l_{t+1}\text{-}\SGP\\
	&=\text{gcd}(m,n)\text{-}\SGP.
	\end{split}
	\end{equation*}
	
	On the other hand, we have $\text{gcd}(m,n)\text{-}\SGP\subseteq\mSGP\cap\nSGP$ by Lemma \ref{Lem1}. Then we have done this proof.
\end{proof}

\begin{cor}
	For any  integer $n\geqslant1$, $\nSGP\cap (n+1)\text{-}\SGP=\SGP$. In particular, $\bigcap_{n\geqslant 2}n\text{-}\SGP=\SGP$.
\end{cor}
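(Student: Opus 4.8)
The plan is to deduce both assertions directly from Theorem \ref{STR1}, whose formula $\mSGP\cap\nSGP=\text{gcd}(m,n)\text{-}\SGP$ reduces everything to an elementary observation about greatest common divisors; there is no new homological content to produce.

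For the first assertion, I would apply Theorem \ref{STR1} to the pair consisting of $n+1$ and $n$. Since consecutive integers are coprime, $\text{gcd}(n,n+1)=1$, and hence $\nSGP\cap (n+1)\text{-}\SGP=1\text{-}\SGP$. It then remains only to identify $1\text{-}\SGP$ with $\SGP$, which is immediate from Definition \ref{strongly}: the case $n=1$ there is precisely the definition of a $\xi$-SG-projective object, so $1\text{-}\SGP=\SGP$ by fiat.

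For the second assertion, the inclusion $\SGP\subseteq\bigcap_{n\geqslant 2}n\text{-}\SGP$ follows at once from Remark \ref{REM2}(1), which gives $\SGP\subseteq n\text{-}\SGP$ for every $n\geqslant 1$. For the reverse inclusion, I would note that the full intersection is contained in the intersection of just the two terms $n=2$ and $n=3$; applying the first assertion with $n=2$ (equivalently, Theorem \ref{STR1} together with $\text{gcd}(2,3)=1$) yields $2\text{-}\SGP\cap 3\text{-}\SGP=\SGP$, so that $\bigcap_{n\geqslant 2}n\text{-}\SGP\subseteq 2\text{-}\SGP\cap 3\text{-}\SGP=\SGP$.

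There is essentially no obstacle here, since all of the real work was absorbed into Theorem \ref{STR1}; what remains is merely the arithmetic remark that one can always exhibit a coprime pair among $\{n,n+1\}$ (respectively the coprime pair $\{2,3\}$). The only point demanding any attention is the definitional bookkeeping $1\text{-}\SGP=\SGP$, which is settled simply by unwinding Definition \ref{strongly}.
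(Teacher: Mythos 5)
Your proof is correct and matches the paper's intent exactly: the corollary is stated without proof as an immediate consequence of Theorem \ref{STR1}, via $\gcd(n,n+1)=1$ and the identification $1\text{-}\SGP=\SGP$ from Definition \ref{strongly}. The reduction of the infinite intersection to the pair $\{2,3\}$ is the obvious and correct way to obtain the second assertion.
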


Next, we give some equivalent characterization of $\xi$-$n$-SG-projective.

\begin{thm}\label{STR2}
	Let integer $n\geqslant 1$ and $A\in\Mcc$. Then the following statements are equivalent.
	
	(1) $A$ is  $\xi$-$n$-SG-projective.
	
	(2) There exists a $\xi$-exact complex:
	$$0\longrightarrow A\longrightarrow P_{n-1}\longrightarrow P_{n-2}\longrightarrow\cdots\longrightarrow P_1\longrightarrow P_0\longrightarrow A\longrightarrow0$$
	with $P_i\in\P$ and the resolution $\Mbe$-triangles $\xymatrix{ K_{i+1}\ar[r]&P_i\ar[r]&K_i\ar@{-->}[r]&}$  in $\xi$ for any $ 0\leqslant i \leqslant n-1$ where $K_{n}=K_0=A$, such that $\oplus_{i=1}^nK_i$ is in $\SGP$.
	
	(3)  There exists a $\xi$-exact complex:
	$$0\longrightarrow A\longrightarrow P_{n-1}\longrightarrow P_{n-2}\longrightarrow\cdots\longrightarrow P_1\longrightarrow P_0\longrightarrow A\longrightarrow0$$
	with $P_i\in\P$  and the resolution $\Mbe$-triangles $\xymatrix{ K_{i+1}\ar[r]&P_i\ar[r]&K_i\ar@{-->}[r]&}$  in $\xi$ for any $ 0\leqslant i \leqslant n-1$ where $K_{n}=K_0=A$, such that $\oplus_{i=1}^nK_i$ is in $\GP$.
	
	(4) There exists a $\xi$-exact complex:
	$$0\longrightarrow A\longrightarrow P_{n-1}\longrightarrow P_{n-2}\longrightarrow\cdots\longrightarrow P_1\longrightarrow P_0\longrightarrow A\longrightarrow0$$
	with $\xi\text{-pd} P_i< \infty$ and the resolution $\Mbe$-triangles $\xymatrix{ K_{i+1}\ar[r]&P_i\ar[r]&K_i\ar@{-->}[r]&}$  in $\xi$ for any $ 0\leqslant i \leqslant n-1$ where $K_{n}=K_0=A$, such that $\oplus_{i=1}^nK_i$ is in $\SGP$.
	
	(5)  There exists a $\xi$-exact complex:
	$$0\longrightarrow A\longrightarrow P_{n-1}\longrightarrow P_{n-2}\longrightarrow\cdots\longrightarrow P_1\longrightarrow P_0\longrightarrow A\longrightarrow0$$
	with  $\xi\text{-pd} P_i< \infty$ and the resolution $\Mbe$-triangles $\xymatrix{ K_{i+1}\ar[r]&P_i\ar[r]&K_i\ar@{-->}[r]&}$  in $\xi$ for any $ 0\leqslant i \leqslant n-1$ where $K_{n}=K_0=A$, such that $\oplus_{i=1}^nK_i$ is in $\GP$.
\end{thm}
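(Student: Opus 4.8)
The theorem asserts the equivalence of five characterizations of $\xi$-$n$-SG-projectivity. The natural strategy is to prove a cycle of implications, together with the two ``obvious'' containments that make the weaker-looking conditions collapse to the stronger ones. Since conditions (2) and (3) differ only by ``$\oplus_{i=1}^n K_i \in \SGP$'' versus ``$\oplus_{i=1}^n K_i \in \GP$'', and similarly (4) and (5), while (2),(4) and (3),(5) differ only by requiring $P_i \in \P$ versus $\xi\text{-pd}\,P_i < \infty$, I would organize the proof as the chain
\[
(1)\Rightarrow(2)\Rightarrow(4)\Rightarrow(5)\Rightarrow(3)\Rightarrow(1),
\]
where several of the arrows are immediate. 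The implications $(2)\Rightarrow(4)$ and $(3)\Rightarrow(5)$ are trivial since $\P\subseteq\widehat{\Mcp}(\xi)$ (any $\xi$-projective object has $\xi\text{-pd}=0<\infty$), and $(2)\Rightarrow(3)$, $(4)\Rightarrow(5)$ are immediate because $\SGP\subseteq\GP$ by Remark \ref{REM2}(1). So the genuine work lies in $(1)\Rightarrow(2)$ and in the hard closing implication $(5)\Rightarrow(1)$ (equivalently $(3)\Rightarrow(1)$).

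\medskip
\noindent\textbf{The easy direction $(1)\Rightarrow(2)$.} Suppose $A$ is $\xi$-$n$-SG-projective. Then by Definition \ref{strongly} there is a $\fbzh$ complete $\P$-exact complex as in (2), giving resolution $\Mbe$-triangles $\xymatrix{K_{i+1}\ar[r]&P_i\ar[r]&K_i\ar@{-->}[r]&}$ with $K_n=K_0=A$. I would first splice these $n$ triangles cyclically into a single doubly-infinite $\fbzh$ complete $\P$-exact complex whose syzygies cycle through $A,K_1,\dots,K_{n-1}$ with period $n$; by Remark \ref{REM2}(2) each $K_i$ is itself $\xi$-$n$-SG-projective. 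Taking the direct sum $S=\bigoplus_{i=1}^n K_i$, I would use Proposition \ref{PROP2} (closure of $\nSGP$ under finite direct sums) to conclude $S\in\nSGP$, and then show that the period-$n$ complex for $S$ can be ``unrolled'' into a period-$1$ complete $\P$-exact complex by using the shift-by-one self-map of the cyclic complex: the direct sum over a full cycle realizes $S$ as its own first syzygy, i.e. there is a $\fbzh$ $\Mbe$-triangle $\xymatrix{S\ar[r]&\bigoplus_{i=0}^{n-1}P_i\ar[r]&S\ar@{-->}[r]&}$, which exhibits $S\in\SGP$. The key technical point is producing the single self-extension of $S$ by a $\xi$-projective; this is the standard ``rolling up'' of a periodic acyclic complex, carried out via iterated application of Lemma \ref{BH} to paste the successive triangles.

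\medskip
\noindent\textbf{The main obstacle: $(5)\Rightarrow(1)$.} Here I only know that $\oplus_{i=1}^n K_i \in \GP$ and that the $P_i$ have \emph{finite} $\xi$-projective dimension, and I must recover genuine $\xi$-$n$-SG-projectivity with honest $\xi$-projective terms. The plan is two-stage. First, \textbf{reduce the finite-dimension hypothesis to genuine projectives}: using enough $\xi$-projectives and Corollary \ref{yxh}/Theorem \ref{XMYY}, I would replace each $P_i$ with $\xi\text{-pd}\,P_i<\infty$ by a true $\xi$-projective at the cost of modifying the syzygies, checking via Lemma \ref{ABC} and the inequalities of Theorem \ref{XMYY} that the new syzygies remain in $\GP$ and that the $\fbzh$ property is preserved (this uses Lemma \ref{FBZH} and diagram chasing with the functor $\fbzh$). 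Second, \textbf{upgrade $\GP$ to $\SGP$/period-$1$}: from $S=\oplus_{i=1}^n K_i\in\GP$ I must produce the defining $\fbzh$ self-extension showing each $K_i$ (in particular $A=K_0$) sits in a complete $\P$-exact complex of period $n$. The crux is that being $\GP$ gives a \emph{complete} $\P$-exact resolution of $S$, and I must splice this with the given cyclic structure so that $A$ reappears as the $n$-th syzygy; this is where I expect the real difficulty, since one must verify both halves of the complete resolution (the left, co-resolving, half and the right, resolving, half) remain $\fbzh$ after the splice, invoking Lemma \ref{ABC} repeatedly to keep all intermediate objects in $\GP$ and Lemma \ref{FBZH1} to certify that the pasted $\Mbe$-triangles genuinely lie in $\xi$. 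Once $A$ is exhibited as the $n$-th syzygy of a two-sided $\fbzh$ complete $\P$-exact complex with $\xi$-projective terms, condition (1) follows directly from Definition \ref{strongly}.
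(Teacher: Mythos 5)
Your overall skeleton (prove $(1)\Rightarrow(2)$, note the trivial arrows $(2)\Rightarrow(3)\Rightarrow(5)$ and $(2)\Rightarrow(4)\Rightarrow(5)$, close with $(5)\Rightarrow(1)$) is the same as the paper's, and your $(1)\Rightarrow(2)$ is essentially the paper's argument: since $\xi$ is closed under finite coproducts, the direct sum of the $n$ resolution $\Mbe$-triangles is a single $\fbzh$ $\Mbe$-triangle $\oplus_{i=1}^{n}K_i\rightarrow\oplus_{i=0}^{n-1}P_i\rightarrow\oplus_{i=0}^{n-1}K_i\dashrightarrow$ in $\xi$, and $K_n=K_0=A$ gives $\oplus_{i=1}^{n}K_i\simeq\oplus_{i=0}^{n-1}K_i$, so this triangle is itself the self-extension exhibiting $\oplus_{i=1}^{n}K_i\in\SGP$. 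Your appeal to iterated pasting via Lemma \ref{BH} and to Proposition \ref{PROP2} is unnecessary here, but not wrong.

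The genuine gap is in $(5)\Rightarrow(1)$. The observation you are missing is that the hypotheses of (5) already force every $P_i$ to lie in $\P$ and every resolution triangle to be $\fbzh$; no replacement of the $P_i$ and no splicing with a complete resolution of $\oplus_{i=1}^{n}K_i$ is needed. Concretely: $\oplus_{i=1}^{n}K_i\in\GP$ together with Proposition \ref{CLOD} (closure under direct summands) gives $K_i\in\GP$ for every $i$; applying Lemma \ref{ABC} to $K_{i+1}\rightarrow P_i\rightarrow K_i\dashrightarrow$, whose outer terms are now both $\Gproj$, gives $P_i\in\GP$; a $\Gproj$ object of finite $\xi$-projective dimension satisfies $\xi\text{-pd}P_i=\Gpd P_i=0$ by \citep[Proposition 5.4]{JDP}, so $P_i\in\P$; and Lemma \ref{FBZH} makes each resolution triangle $\fbzh$ automatically because its third term $K_i$ is $\Gproj$. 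Hence the given complex is already the complete $\P$-exact complex required by Definition \ref{strongly}. By contrast, your first stage (replacing each $P_i$ of finite $\xi$-projective dimension by a genuine $\xi$-projective ``at the cost of modifying the syzygies'') would destroy the periodicity $K_n=K_0=A$ that condition (1) demands, and your second stage (producing and splicing a complete resolution of $\oplus_{i=1}^{n}K_i$) aims at the wrong target: condition (1) asks for a period-$n$ complex through $A$ with $\xi$-projective terms, which is exactly the complex already in hand once one notices $P_i\in\P$. As written, your plan for the closing implication does not go through.
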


\begin{proof}
	(1) $ \Rightarrow$ (2) Assume $A$ is $\xi$-$n$-SG-projective, then there exists a complete $\P$-exact complex:
	$$0\longrightarrow A\longrightarrow P_{n-1}\longrightarrow P_{n-2}\longrightarrow\cdots\longrightarrow P_1\longrightarrow P_0\longrightarrow A\longrightarrow0$$
	with $P_i\in\P$ for any $ 0\leqslant i \leqslant n-1$. Thus for each $0\leqslant i\leqslant n-1$, we have a $\fbzh$ resolution $\Mbe$-triangle $\xymatrix{ K_{i+1}\ar[r]&P_i\ar[r]&K_i\ar@{-->}[r]&}$ in $\xi$, where $K_{n}=K_0=A$. 
	By adding those $\Mbe$-triangles, we can get a $\fbzh$ $\Mbe$-triangle in $\xi$ as follows:
	$$\xymatrix{\oplus_{i=1}^n K_i\ar[r]&\oplus_{i=0}^{n-1} P_{i-1}\ar[r]&\oplus_{i=0}^{n-1}K_i\ar@{-->}[r]&}.$$
	It is easy to see $\oplus_{i=1}^n K_i\simeq \oplus_{i=0}^{n-1}K_i$, then it is enough to show that $\oplus_{i=1}^n K_i$ is in $\SGP$.
	
	(2) $\Rightarrow$ (3) $\Rightarrow$ (5) and (2) $\Rightarrow$ (4) $\Rightarrow$ (5) are trivial.
	
	(5) $\Rightarrow$ (1) Let
	$$0\longrightarrow A\longrightarrow P_{n-1}\longrightarrow P_{n-2}\longrightarrow\cdots\longrightarrow P_1\longrightarrow P_0\longrightarrow A\longrightarrow0$$
	with  $\xi\text{-pd} P_i< \infty$ and the resolution $\Mbe$-triangles $\xymatrix{ K_{i+1}\ar[r]&P_i\ar[r]&K_i\ar@{-->}[r]&}$  in $\xi$ for any $ 0\leqslant i \leqslant n-1$ where $K_{n}=K_0=A$, such that $\oplus_{i=1}^nK_i$ is in $\GP$. Then we can get that $K_i$ is in $\GP$  by Proposition \ref{CLOD}, thus each $P_i$ is $\Gproj$ by Lemma \ref{ABC} for any $ 0\leqslant i \leqslant n-1$. By  \citep[Proposition 5.4]{JDP}, We can get that  $\xi\text{-pd}P_i=\Gpd P_i=0$ which implies that  $P_i$ is in $\P$, and by Lemma \ref{FBZH}, we can get  the  $\Mbe$-triangle $ K_{i+1}\longrightarrow P_i\longrightarrow K_i\dashrightarrow$ is $\fbzh$ for all $ 0\leqslant i \leqslant n-1$. It is enough to show $A$ is  $\xi$-$n$-SG-projective.
\end{proof}

\begin{thm}\label{hmy}
	Let $\Mcc$ be a Krull-Schimit category, then for any object $A$ in $\Mcc$, if $A$ is $\xi$-$n$-SG-projective if and only if $\underline{A}$  is $\xi$-$n$-SG-projective, where  $\underline{A}$ is the maximal  direct summands of $A$ without $\xi$-projective  direct summands.
\end{thm}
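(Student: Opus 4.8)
The plan is to prove the two directions separately; the reverse implication is short and the forward one carries all the weight. For the reverse direction, write $A\cong\underline{A}\oplus P$ with $P\in\P$. By Remark \ref{REM2}(1) we have $P\in\P\subseteq\SGP\subseteq\nSGP$, so if $\underline A$ is $\xi$-$n$-SG-projective then $A\cong\underline A\oplus P$ is $\xi$-$n$-SG-projective by Proposition \ref{PROP2}, since $\nSGP$ is closed under finite direct sums. Thus the whole problem reduces to showing that $\xi$-$n$-SG-projectivity is preserved when one deletes a $\xi$-projective direct summand.

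For the forward direction, fix a complete $\P$-exact complex
$$
0\longrightarrow A\longrightarrow P_{n-1}\longrightarrow\cdots\longrightarrow P_0\longrightarrow A\longrightarrow0
$$
realizing $A$ as $\xi$-$n$-SG-projective, with resolution $\Mbe$-triangles $K_{i+1}\to P_i\to K_i\dashrightarrow$ in $\xi$ (each $\fbzh$), where $K_n=K_0=A\cong\underline A\oplus P$ and $P\in\P$. The idea is to strip the summand $P$ off the two end triangles while leaving the inner syzygies $K_{n-1},\dots,K_1$ untouched, and then to splice. On the bottom triangle $K_1\to P_0\stackrel{f_0}{\longrightarrow}A\dashrightarrow\delta_0$, I use that $P$ is $\xi$-projective to lift the inclusion $\iota_P\colon P\to A$ through the deflation $f_0$; the lift is automatically a section, so the induced idempotent on $P_0$ splits (here is where the Krull--Schmidt hypothesis enters) and yields $P_0\cong P_0'\oplus P$ with $P_0'\in\P$. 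A block-matrix normalization then rewrites $f_0$ as $\beta\oplus 1_P$ with $\beta\colon P_0'\to\underline A$, and the relation $f_0g=0$ forces the inflation $g\colon K_1\to P_0$ to land in $P_0'$. Hence the bottom triangle splits off the trivial triangle $0\to P\stackrel{1_P}{\longrightarrow}P\dashrightarrow$, leaving $K_1\to P_0'\stackrel{\beta}{\longrightarrow}\underline A\dashrightarrow$; this last sequence realizes the base change $\iota_{\underline A}^{*}\delta_0$ and so lies in $\xi$, with the same first term $K_1$. Dually, on the top triangle $A\stackrel{f_n}{\longrightarrow}P_{n-1}\to K_{n-1}\dashrightarrow$ I now use that this triangle is $\fbzh$ to lift the projection $\pi_P\colon A\to P$ through the inflation $f_n$, split off a copy of $0\to P\stackrel{1_P}{\longrightarrow}P$, and obtain as a cobase change along $A\twoheadrightarrow\underline A$ an $\Mbe$-triangle $\underline A\to P_{n-1}'\to K_{n-1}\dashrightarrow$ in $\xi$ with $P_{n-1}'\in\P$ and the same $K_{n-1}$.

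Both stripped triangles are again $\fbzh$, because applying $\Mcc(-,\P)$ to a direct summand of an $\fbzh$ $\Mbe$-triangle gives a direct summand of a short exact sequence of abelian groups, which is again short exact. Splicing the stripped top triangle $\underline A\to P_{n-1}'\to K_{n-1}$, the unchanged inner triangles $K_{i+1}\to P_i\to K_i$ for $1\le i\le n-2$, and the stripped bottom triangle $K_1\to P_0'\to\underline A$ produces a $\xi$-exact, $\fbzh$ complex
$$
0\longrightarrow\underline A\longrightarrow P_{n-1}'\longrightarrow P_{n-2}\longrightarrow\cdots\longrightarrow P_1\longrightarrow P_0'\longrightarrow\underline A\longrightarrow0
$$
with every term in $\P$, which is exactly a witness that $\underline A$ is $\xi$-$n$-SG-projective. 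When $n=1$ the top and bottom triangles coincide, and one simply performs the two strippings in succession on the single triangle $A\to P_0\to A$.

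The step I expect to be the main obstacle is the stripping itself, namely justifying rigorously that after removing the split projective piece the remainder is still an $\Mbe$-triangle in $\xi$ with unchanged syzygy and still $\fbzh$. The cleanest way to secure this is to package the normalization as a base change (respectively cobase change) of $\delta_0$ (respectively $\delta_{n-1}$) and to invoke Lemma \ref{FJ} applied to the section $s$ (respectively its dual) to see that the projective component $\iota_P^{*}\delta_0$ of the extension vanishes; this guarantees that the copy of $P$ genuinely splits off as a direct summand of the $\Mbe$-triangle rather than merely of the underlying sequence of objects.
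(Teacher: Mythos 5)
Your proof is correct, and while the global strategy coincides with the paper's (strip the $\xi$-projective summand $P$ off the two end resolution triangles, leave the inner syzygies alone, and splice), the mechanism you use for the stripping is genuinely different. The paper produces the stripped triangles $\underline{A}\to Q_{n-1}\to K_{n-1}$ and $K_1\to Q_0\to \underline{A}$ by applying $\mathrm{(ET4)}$ and $\mathrm{(ET4)^{op}}$ to the split triangles $P\to A\to \underline{A}$ and $\underline{A}\to A\to P$, and then must work to show the new middle terms are $\xi$-projective: it first checks that the companion triangles $P\to P_{n-1}\to Q_{n-1}$ and $Q_0\to P_0\to P$ are $\Mcc(\P,-)$-exact and hence in $\xi$ by Lemma \ref{FBZH1}, and for the top one it routes through the Gorenstein machinery ($Q_{n-1}\in\GP$ by Lemma \ref{ABC}, so the triangle is $\fbzh$ by Lemma \ref{FBZH}, so it splits by Lemma \ref{FJ}). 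You instead lift $\iota_P$ through the deflation (resp.\ $\pi_P$ through the inflation, where the $\fbzh$ hypothesis is what makes the lift exist), deduce from the long exact sequence that the $P$-component of the extension class vanishes, split the resulting idempotent, and identify the remainder as a base (resp.\ cobase) change of the original extension. This bypasses the detour through $\GP$ entirely and makes the persistence of $\fbzh$-ness transparent (a direct summand of a short exact sequence of abelian groups is exact), at the cost of some explicit idempotent and block-matrix bookkeeping, which the Krull--Schmidt hypothesis licenses. You also treat the case $n=1$, where the two end triangles coincide, explicitly; the paper leaves this implicit. Both arguments are sound; yours is the more elementary of the two.
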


\begin{proof}
	Let $A=\underline{A}\oplus P$ with $P\in \P$. If $\underline{A}$ is $\xi$-$n$-SG-projective, then $A$ is also $\xi$-$n$-SG-projective by Proposition \ref{PROP2}.
	
	Conversely, assume that $A$  is $\xi$-$n$-SG-projective, then there exists a complete $\P$-exact complex:
	$$0\longrightarrow (A=)\underline{A}\oplus P\longrightarrow P_{n-1}\longrightarrow \cdots\longrightarrow P_0\longrightarrow\underline{A}\oplus P (=A)\longrightarrow0$$
	with $P_i\in\P$ for any $0\leqslant i\leqslant n-1$.
	
	First, for any $0\leqslant i\leqslant n-1$, we have a $\fbzh$ resolution $\Mbe$-triangle $ K_{i+1}\longrightarrow P_i\longrightarrow K_i\dashrightarrow$ in $\xi$ where $K_{n}=K_0=A$. By $\rm{(ET4)}$, there exists a commutative diagram as follows:
	$$\xymatrix{
		P\ar[r]\ar@{=}[d]&A\ar[d]\ar[r]&{\underline{A}}\ar[d]&\\
		P\ar[r]&P_{n-1}\ar[d]\ar[r]&Q_{n-1}\ar[d]&\\
		&K_{n-1}\ar@{=}[r]&K_{n-1}
	}
	$$
	Note that $\xymatrix{A\ar[r]&Q_{n-1}\ar[r]&K_{n-1}\ar@{-->}[r]&}$ is an  $\Mbe$-triangle in $\xi$ since $\xi$ is closed under cobase change. Applying the functor $\Mcc(\P,-)$ to the above diagram, it is easy to see that the $\Mbe$-triangle $\xymatrix{P\ar[r]&P_{n-1}\ar[r]&Q_{n-1}\ar@{-->}[r]&}$ is $\Mcc(\P,-)$-exact by a simply diagram chasing. Therefore,  it is in $\xi$ by Lemma \ref{FBZH1}.
	
	$A$ is $\xi$-$n$-SG-projective, then $K_i$ is $\xi$-$n$-SG-projective by Remark \ref{REM2}(2) for all $0\leqslant i\leqslant n$. So we have $A$ and $K_i$ are in $\Gproj$. It implies that both $\underline{A}$ and $Q_{n-1}$ are also $\Gproj$ by Lemma \ref{ABC} and Lemma \ref{CLOD}. Note that the $\Mbe$-triangle $P\longrightarrow P_{n-1}\longrightarrow Q_{n-1}\dashrightarrow$ is $\fbzh$, because of $Q_{n-1}\in \GP$ and Lemma \ref{FBZH}. So we have following exact sequence in $\mathbf{Ab}$.
	$$0\longrightarrow\Mcc(Q_{n-1},P)\longrightarrow \Mcc(P_{n-1},P)\longrightarrow\Mcc(P,P)\longrightarrow0$$
	This shows the $\Mbe$-triangle $\xymatrix{P\ar[r]&P_{n-1}\ar[r]&Q_{n-1}\ar@{-->}[r]&}$ is split by Lemma \ref{FJ}, i.e. $P_{n-1}\simeq P\oplus Q_{n-1}$. Then one can get that $Q_{n-1}$ is $\xi$-projective. Applying the functor  $\Mcc(-,\P)$ to the above commutative diagram, it is easy to see that the $\Mbe$-triangle $\xymatrix{\underline{A}\ar[r]&Q_{n-1}\ar[r]&K_{n-1}\ar@{-->}[r]&}$ is $\fbzh$ by a diagram chasing.
	
	Next, consider the following commutative diagram by $\rm{(ET4)^{op}}$:
	$$\xymatrix{
		K_1\ar[r]\ar@{=}[d]&Q_0\ar[d]\ar[r]&{\underline{A}}\ar[d]&\\
		K_1\ar[r]&P_{0}\ar[d]\ar[r]&A\ar[d]&\\
		&P\ar@{=}[r]&P\\
	}
	$$
	Then $\xymatrix{K_1\ar[r]&Q_0\ar[r]&{\underline{A}}\ar@{-->}[r]&}$ is an $\Mbe$-triangle in $\xi$ since $\xi$ is closed under base change, and it is $\fbzh$ since $\underline{A}\in\GP$. Applying functor $\Mcc(\P,-)$  to the above commutative diagram, it is easy to see that the triangle $\Mbe$-triangle $Q_0\longrightarrow P_{0}\longrightarrow P\dashrightarrow $ is $\Mcc(\P,-)$-exact by a diagram chasing, so it is in $\xi$ by Lemma \ref{FBZH1}. This shows $P_0\simeq Q_0\oplus P$, thus $Q_0$ is in $\P$ by Remark \ref{REM1}.
	
	So we obtain the following complete $\P$-exact complex:
	$$0\longrightarrow\underline{A}\longrightarrow Q_{n-1}\longrightarrow P_{n-2}\longrightarrow\cdots\longrightarrow P_1\longrightarrow Q_0\longrightarrow\underline{A}\longrightarrow0$$
	That is to say $\underline{A}$ is $\xi$-$n$-SG-projective.
\end{proof}
\begin{cor}\label{ymy}
	If $\xymatrix{A\ar[r]^x&{B}\ar[r]^y&{P}\ar@{-->}[r]^{\delta}&}$ is an $\Mbe$-triangle in $\xi$ with $P\in\P$, then $A\in\nSGP$ if and only if $B\in\nSGP$.
\end{cor}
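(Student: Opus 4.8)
The plan is to first reduce the statement to a question about direct summands and then to recycle the peeling argument already carried out in the proof of Theorem \ref{hmy}. Since $P\in\P$, Remark \ref{REM1}(2) tells us that the $\Mbe$-triangle $\xymatrix{A\ar[r]^x&B\ar[r]^y&P\ar@{-->}[r]^{\delta}&}$ is split, so $B\simeq A\oplus P$. Consequently the corollary is equivalent to the assertion that $A\in\nSGP$ if and only if $A\oplus P\in\nSGP$, and it is this form that I would prove.

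For the forward implication, suppose $A\in\nSGP$. By Remark \ref{REM2}(1) we have $P\in\P\subseteq\nSGP$, and since $\nSGP$ is closed under finite direct sums by Proposition \ref{PROP2}, it follows that $A\oplus P\in\nSGP$; as $\nSGP$ is closed under isomorphism, $B\in\nSGP$. This direction is routine.

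For the converse, assume $B\simeq A\oplus P\in\nSGP$ and choose a complete $\P$-exact complex $0\to A\oplus P\to Q_{n-1}\to\cdots\to Q_0\to A\oplus P\to 0$ with resolution $\Mbe$-triangles $\xymatrix{K_{i+1}\ar[r]&Q_i\ar[r]&K_i\ar@{-->}[r]&}$ in $\xi$, where $K_n=K_0=A\oplus P$. I would then split off the projective summand $P$ at both ends exactly as in Theorem \ref{hmy}: applying $\rm{(ET4)}$ to the split triangle $\xymatrix{P\ar[r]&A\oplus P\ar[r]&A\ar@{-->}[r]&}$ together with the first resolution triangle peels a copy of $P$ off $Q_{n-1}$, producing an $\Mbe$-triangle $\xymatrix{A\ar[r]&Q'_{n-1}\ar[r]&K_{n-1}\ar@{-->}[r]&}$ with $Q'_{n-1}\in\P$; dually, applying $\rm{(ET4)^{op}}$ to $\xymatrix{A\ar[r]&A\oplus P\ar[r]&P\ar@{-->}[r]&}$ and the last resolution triangle yields $\xymatrix{K_1\ar[r]&Q'_0\ar[r]&A\ar@{-->}[r]&}$ with $Q'_0\in\P$. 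Splicing these with the unchanged middle triangles gives a complete $\P$-exact complex $0\to A\to Q'_{n-1}\to Q_{n-2}\to\cdots\to Q_1\to Q'_0\to A\to 0$, whence $A\in\nSGP$.

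The technical heart of the converse — checking that the two auxiliary $\Mbe$-triangles $\xymatrix{P\ar[r]&Q_{n-1}\ar[r]&Q'_{n-1}\ar@{-->}[r]&}$ and $\xymatrix{Q'_0\ar[r]&Q_0\ar[r]&P\ar@{-->}[r]&}$ are split (so that $Q'_{n-1},Q'_0\in\P$) and that the new resolution triangles are $\fbzh$ — is precisely the computation performed in the proof of Theorem \ref{hmy}, relying on Lemma \ref{FJ}, Proposition \ref{CLOD}, Lemma \ref{ABC}, Lemma \ref{FBZH} and Lemma \ref{FBZH1}. The one point I would emphasize, and the main thing to make sure of, is that those verifications use only the given decomposition $B\simeq A\oplus P$ and never the uniqueness of decompositions, so the Krull--Schmidt hypothesis of Theorem \ref{hmy} is not needed here.
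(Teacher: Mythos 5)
Your proof is correct and matches the paper's intended argument: the paper states this corollary without proof immediately after Theorem \ref{hmy}, and (as the parallel Proposition \ref{tsdj} makes explicit) the intended route is exactly yours --- split the $\Mbe$-triangle via Remark \ref{REM1}(2) to get $B\simeq A\oplus P$, handle the easy direction with Proposition \ref{PROP2}, and peel the projective summand off both ends as in the proof of Theorem \ref{hmy}. Your observation that the Krull--Schmidt hypothesis of Theorem \ref{hmy} is used only to produce the decomposition $A=\underline{A}\oplus P$ and not to run the peeling argument itself is accurate, and it is precisely what justifies stating the corollary without that hypothesis.
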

\begin{prop}\label{tsdj}
	Assume that $A$ and $B$ are $\xi$-projectively equivalent in $\Mcc$. Then, for any $n\geqslant 1$, $A\in \nSGP$ if and only if $B\in\nSGP$.
\end{prop}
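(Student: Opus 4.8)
The plan is to unwind the definition of $\xi$-projective equivalence and then absorb and strip off $\xi$-projective summands using closure properties already established. As made explicit in the proof of Proposition \ref{PROP1}, saying that $A$ and $B$ are $\xi$-projectively equivalent means precisely that there exist $P,Q\in\P$ with $A\oplus P\simeq B\oplus Q$. By the symmetry of this condition in $A$ and $B$ it suffices to prove a single implication, say that $A\in\nSGP$ forces $B\in\nSGP$; the reverse implication then follows by interchanging the roles of $A$ and $B$ together with those of $P$ and $Q$.

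First I would absorb the projective summand on the $A$-side. Since $P\in\P\subseteq\nSGP$ by Remark \ref{REM2}(1) and $\nSGP$ is closed under finite direct sums by Proposition \ref{PROP2}, the hypothesis $A\in\nSGP$ yields $A\oplus P\in\nSGP$. Because $\nSGP$ is defined through $\xi$-exact complexes (Definition \ref{strongly}) it is closed under isomorphism, so the isomorphism $A\oplus P\simeq B\oplus Q$ transports this membership to $B\oplus Q\in\nSGP$.

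It then remains to strip off the summand $Q$ on the $B$-side. For this I would invoke the split $\Mbe$-triangle $\xymatrix{B\ar[r]&B\oplus Q\ar[r]&Q\ar@{-->}[r]&}$, which lies in $\xi$ because $\Delta_0\subseteq\xi$ by Definition \ref{ZL}(1). Since $Q\in\P$, Corollary \ref{ymy} applies with this triangle and gives the equivalence $B\in\nSGP\Leftrightarrow B\oplus Q\in\nSGP$. Combined with the previous paragraph this delivers $B\in\nSGP$, completing the implication and hence, by symmetry, the proposition.

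The argument is essentially a bookkeeping exercise resting on three prior facts — the containment $\P\subseteq\nSGP$, closure of $\nSGP$ under finite direct sums, and Corollary \ref{ymy} — so I do not expect a genuine obstacle. The only points needing a moment of care are verifying that $\nSGP$ is closed under isomorphism (immediate from Definition \ref{strongly}, since an isomorphism transports a defining $\xi$-exact complex to another such complex) and that the split triangle used to remove $Q$ genuinely belongs to $\xi$ (guaranteed by $\Delta_0\subseteq\xi$); both are routine.
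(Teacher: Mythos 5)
Your argument is correct and follows essentially the same route as the paper, which also reduces the statement to the chain $A\in\nSGP\Leftrightarrow A\oplus P\in\nSGP\Leftrightarrow B\oplus Q\in\nSGP\Leftrightarrow B\in\nSGP$; where the paper merely says the outer equivalences are ``similar to the proof of Theorem \ref{hmy}'', you justify them cleanly via Proposition \ref{PROP2} together with $\P\subseteq\nSGP$ for absorbing the summand and via Corollary \ref{ymy} applied to the split triangle $B\rightarrow B\oplus Q\rightarrow Q\dashrightarrow$ (which lies in $\xi$ since $\Delta_0\subseteq\xi$) for stripping it off. This is the same underlying content, packaged slightly more explicitly, and the routine checks you flag (isomorphism-closure of $\nSGP$, membership of the split triangle in $\xi$) are indeed all that is needed.
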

\begin{proof}
	Let $A\oplus P\backsimeq B\oplus Q$, where $P$ and $Q$ are belong to $\P$. Similar to the proof of Theorem \ref{hmy},we can get 
\begin{equation*}
	\begin{split}
   A\in \nSGP&\Leftrightarrow A\oplus P\in\nSGP\\
   &\Leftrightarrow B\oplus Q\in\nSGP\\
   &\Leftrightarrow B\in\nSGP
	\end{split}
   \end{equation*}
i.e.  $A\in \nSGP$ if and only if $B\in\nSGP$.
\end{proof}

\begin{cor}
	Let $A$ is $\xi$-$n$-SG-projective and the complex 
$$\mathbf{P}: \cdots\longrightarrow P_{i}\longrightarrow P_{i-1}\longrightarrow\cdots\longrightarrow P_1\longrightarrow P_0\longrightarrow A\longrightarrow0$$
is a $\xi$-projective resolution of $A$,
then any $\xi$-syzygy $K^{\mathbf{P}}_i$ of $A$ is $\xi$-$n$-SG-projective.
\end{cor}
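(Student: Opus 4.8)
The plan is to reduce the claim to the syzygies arising in the \emph{defining} complex of $A$ and then transport the conclusion along $\xi$-projective equivalence. Since $A$ is $\xi$-$n$-SG-projective, Definition \ref{strongly} supplies a complete $\mathcal{P}(\xi)$-exact complex
$$0\longrightarrow A\longrightarrow P_{n-1}\longrightarrow\cdots\longrightarrow P_0\longrightarrow A\longrightarrow0$$
together with resolution $\Mbe$-triangles $\xymatrix{K_{j+1}\ar[r]&P_j\ar[r]&K_j\ar@{-->}[r]&}$ in $\xi$ for $0\leqslant j\leqslant n-1$, where $K_0=K_n=A$. By Remark \ref{REM2}(2) every $K_j$ with $0\leqslant j\leqslant n$ is itself $\xi$-$n$-SG-projective. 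Because $K_n=A=K_0$, I can splice this finite complex to itself repeatedly, obtaining a $\xi$-projective resolution $\mathbf{Q}$ of $A$ whose syzygies run cyclically through $K_1,\dots,K_{n-1},A,K_1,\dots$; explicitly, writing $i=qn+r$ with $0\leqslant r<n$, the $i$-th $\xi$-syzygy $K^{\mathbf{Q}}_i$ of $A$ along $\mathbf{Q}$ equals $K_r$ under the convention $K_0=A$. In every case $K^{\mathbf{Q}}_i$ is one of the objects $K_0,\dots,K_n$, hence lies in $\nSGP$.

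Next I would fix an arbitrary $i\geqslant1$ and compare the two $i$-th $\xi$-syzygies $K^{\mathbf{P}}_i$ and $K^{\mathbf{Q}}_i$ of $A$. Corollary \ref{Sch} (Schanuel's lemma) guarantees that any two $i$-th $\xi$-syzygies of a fixed object are $\xi$-projectively equivalent, so $K^{\mathbf{P}}_i$ and $K^{\mathbf{Q}}_i$ differ only by $\xi$-projective summands. Finally, Proposition \ref{tsdj} asserts that $\xi$-projectively equivalent objects are simultaneously $\xi$-$n$-SG-projective or not; since $K^{\mathbf{Q}}_i\in\nSGP$ by the previous paragraph, it follows that $K^{\mathbf{P}}_i\in\nSGP$, which is precisely the claim.

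The only things that need care are routine: that splicing the periodic complex produces a $\xi$-exact complex with $\xi$-projective terms (inherited directly from the defining complex, since consecutive resolution $\Mbe$-triangles glue along $K_n=K_0=A$), and that the syzygy indices match after reduction modulo $n$ so that \emph{every} $i\geqslant1$ is covered rather than only $1\leqslant i\leqslant n$. The substantive ingredients --- Schanuel's lemma and the invariance of $\nSGP$ under $\xi$-projective equivalence --- are both already established, so I expect no genuine obstacle beyond this bookkeeping of the cyclic identification of syzygies.
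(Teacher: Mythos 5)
Your proof is correct and follows essentially the same route as the paper: splice the defining complete $\mathcal{P}(\xi)$-exact complex periodically to get a resolution $\mathbf{Q}$ whose syzygies are the $K_j$ (hence in $\nSGP$ by Remark \ref{REM2}(2)), then transfer to $K^{\mathbf{P}}_i$ via Schanuel's Lemma (Corollary \ref{Sch}) and the invariance of $\nSGP$ under $\xi$-projective equivalence (Proposition \ref{tsdj}). The only difference is that you make the appeal to Proposition \ref{tsdj} and the modular bookkeeping of syzygy indices explicit, which the paper leaves implicit.
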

\begin{proof}
Since $A$ is $\xi$-$n$-SG-projective,there exists a complete $\P$-exact complex
$$0\longrightarrow A\longrightarrow Q_{n-1}\longrightarrow Q_{n-2}\longrightarrow\cdots\longrightarrow Q_1\longrightarrow Q_0\longrightarrow A\longrightarrow0$$
where $Q_i\in\P,\ 0\leqslant i\leqslant n$. So the complex
$$\mathbf{Q}: \cdots\longrightarrow Q_1\longrightarrow Q_0\longrightarrow Q_{n-1}\longrightarrow Q_{n-2}\longrightarrow\cdots\longrightarrow Q_1\longrightarrow Q_0\longrightarrow A\longrightarrow0$$
is also a $\xi$-projective resolution of $A$.
By Corollary \ref{Sch}, for any integer $j\geqslant 1$, two $j$th $\xi$-syzygy $K^{\mathbf{P}}_j$ and $K^{\mathbf{Q}}_j$ are $\xi$-projective equivalent. But according to Remark \ref{REM2}(2),we have $K^{\mathbf{Q}}_j$ belongs to $\nSGP$. So $K^{\mathbf{P}}_j$ belongs to $\nSGP$. 
\end{proof}

\begin{cor}\label{FM}
	Let $\xymatrix{A\ar[r]&P\ar[r]&C\ar@{-->}[r]&}$ be an $\Mbe$-triangle in $\xi$ with $P\in\P$. If $C\in\nSGP$, then $A\in\nSGP$.
\end{cor}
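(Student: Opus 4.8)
The plan is to realize $A$ as a first $\xi$-syzygy of $C$ and then transport the $\xi$-$n$-SG-projective property from $C$ to $A$ via Schanuel's Lemma. First I would unpack the hypothesis $C\in\nSGP$: by Definition \ref{strongly} there is a complete $\P$-exact complex $0\to C\to P_{n-1}\to\cdots\to P_0\to C\to 0$ with each $P_i\in\P$, and in particular its bottom resolution $\Mbe$-triangle $K_1\to P_0\to C\dashrightarrow$ lies in $\xi$ with $P_0\in\P$. By Remark \ref{REM2}(2) all the syzygies of this complex are again $\xi$-$n$-SG-projective, so $K_1\in\nSGP$.

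Next I would note that the given $\Mbe$-triangle $A\stackrel{x}{\to}P\stackrel{y}{\to}C\dashrightarrow$ in $\xi$ with $P\in\P$ exhibits $A$ as a first $\xi$-syzygy of $C$, exactly as $K_1\to P_0\to C\dashrightarrow$ exhibits $K_1$. Corollary \ref{Sch} (Schanuel's Lemma, applied with $n=1$) then says that any two first $\xi$-syzygies of $C$ are $\xi$-projectively equivalent; concretely the Schanuel isomorphism yields $A\oplus P_0\simeq K_1\oplus P$ with $P_0,P\in\P$. Hence $A$ and $K_1$ are $\xi$-projectively equivalent.

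Finally, Proposition \ref{tsdj} tells us that for $n\geqslant 1$ the class $\nSGP$ is invariant under $\xi$-projective equivalence. Since $K_1\in\nSGP$ and $A$ is $\xi$-projectively equivalent to $K_1$, I conclude $A\in\nSGP$, which is the claim.

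As for difficulty: there is no serious obstacle once the auxiliary results are lined up, and the only point requiring care is the bookkeeping that identifies $A$ as a \emph{first} $\xi$-syzygy of $C$ so that Schanuel applies in its $n=1$ form, rather than attempting to build a complete $\P$-exact complex for $A$ by hand. One could alternatively invoke Lemma \ref{ABC} to see immediately that $A\in\GP$, but that is strictly weaker than the required $\nSGP$ conclusion; the genuine content is carried by the $\xi$-projective-equivalence route through Proposition \ref{tsdj}.
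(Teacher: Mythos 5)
Your proposal is correct and follows essentially the same route as the paper: the paper also takes the bottom resolution triangle $K_1\to P_0\to C\dashrightarrow$ of the complete $\mathcal{P}(\xi)$-exact complex, notes $K_1\in\nSGP$ by Remark \ref{REM2}(2), obtains $A\oplus P_0\simeq K_1\oplus P$ (there by an explicit Lemma \ref{BH} pullback diagram rather than by citing Schanuel's Lemma, but this amounts to the same computation), and concludes via Proposition \ref{tsdj}.
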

\begin{proof}
	Since $C$ is $\xi$-$n$-$\mathcal{G}$projective, there exists a complete $\P$-exact complex:
	$$0\longrightarrow C\longrightarrow P_{n-1}\longrightarrow P_{n-2}\longrightarrow\cdots\longrightarrow P_1\longrightarrow P_0\longrightarrow C\longrightarrow0$$
	with $P_i\in\P$ for any $ 0\leqslant i \leqslant n-1$. Thus for each
	 $0\leqslant i\leqslant n-1$, we have a $\fbzh$ resolution $\Mbe$-triangle
	  $\xymatrix{ K_{i+1}\ar[r]&P_i\ar[r]&K_i\ar@{-->}[r]&}$ in $\xi$, where $K_{n}=K_0=A$.
By Remark \ref{REM2}(2), $K_1\in\nSGP$. Consider the following commutative diagram by Lemma 
\ref{BH}.
\[
	\xymatrix{
		&{K_1}\ar@{=}[r]\ar[d]&{K_1}\ar[d]\\
		A\ar[r]\ar@{=}[d]&B\ar[r]\ar[d]&{P_0}\ar[d]\ar@{-->}[r]&\\
		A\ar[r]&P\ar@{-->}[d]\ar[r]&C\ar@{-->}[r]\ar@{-->}[d]&\\
		&&}
\]
Since $\xi$ is closed under base change, the $\Mbe$-triangles 
\[
	\xymatrix{A\ar[r]&B\ar[r]&{P_0}\ar@{-->}[r]&}\ \text{and}\ \xymatrix{{K_1}\ar[r]&B\ar[r]&{P}\ar@{-->}[r]&}
\] 
are belong to $\xi$, then we can get $B\backsimeq A\oplus P_0\backsimeq K_1\oplus P$ by Remark \ref{REM1}(2).
So $A$ is $\xi$-$n$-$\mathcal{G}$projective by Proposition \ref{tsdj}.
\end{proof}

\begin{cor}
	For any object $A\in\mathcal{C}$ is in $\nSGP$ if and only if there exists an $\Mbe$-triangle $\xymatrix{A\ar[r]&P\ar[r]&G\ar@{-->}[r]&}$ in $\xi$, where $P\in\P$ and $G\in\nSGP$.
\end{cor}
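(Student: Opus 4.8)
The plan is to establish the two implications of the biconditional separately, each of which reduces to a statement already proved in this section.

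For the ``only if'' direction, I would simply unwind Definition \ref{strongly}. If $A\in\nSGP$, then there is a complete $\P$-exact complex
$$0\longrightarrow A\longrightarrow P_{n-1}\longrightarrow\cdots\longrightarrow P_0\longrightarrow A\longrightarrow 0$$
with each $P_i\in\P$, whose resolution $\Mbe$-triangles $K_{i+1}\to P_i\to K_i\dashrightarrow$ lie in $\xi$ and are $\fbzh$, where $K_n=K_0=A$. The key move is to single out the triangle indexed by $i=n-1$, namely $A=K_n\to P_{n-1}\to K_{n-1}\dashrightarrow$. Setting $P:=P_{n-1}\in\P$ and $G:=K_{n-1}$, this is an $\Mbe$-triangle in $\xi$ of exactly the required shape, and Remark \ref{REM2}(2) guarantees that the syzygy $K_{n-1}$ is again $\xi$-$n$-SG-projective, so $G\in\nSGP$.

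For the converse, I observe that the hypothesis is precisely the setup of Corollary \ref{FM}: given an $\Mbe$-triangle $A\to P\to G\dashrightarrow$ in $\xi$ with $P\in\P$ and $G\in\nSGP$, that corollary yields $A\in\nSGP$ at once. Thus this direction requires no further work beyond citing \ref{FM} with $C=G$.

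Since both directions collapse onto earlier results, I do not anticipate a genuine obstacle; the argument is essentially bookkeeping. The one point meriting care is the syzygy indexing in the forward direction---verifying that the extracted resolution triangle really has $A$ (rather than some intermediate $K_i$) as its left-hand term, and that it sits inside $\xi$ and is $\fbzh$ by construction. Once this indexing is pinned down, the proof is immediate.
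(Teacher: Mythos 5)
Your proposal is correct and follows exactly the route the paper intends (the corollary is left unproved there as an immediate consequence of the surrounding results): the ``only if'' direction extracts the resolution $\Mbe$-triangle $A=K_n\to P_{n-1}\to K_{n-1}\dashrightarrow$ from Definition \ref{strongly} together with Remark \ref{REM2}(2), and the ``if'' direction is precisely Corollary \ref{FM}. Your indexing check is the right point to be careful about, and it comes out correctly.
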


At the end of the section, we study the  relation between the $\Gproj$ and $\xi$-SG-projective.
\begin{thm}\label{dyt}
	Suppose that the  countable direct sums  of $\Gproj$ objects exists and 
	 $\xi$ is closed under the countable direct sums, then $A$ is in $\GP$ if and only if $A$  is a direct summand of some object  in $\SGP$.
\end{thm}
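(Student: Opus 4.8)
The plan is to prove the two implications separately; the forward (``if'') direction is formal, while the reverse (``only if'') direction requires building an explicit $\xi$-SG-projective object that contains $A$ as a summand. For the ``if'' direction, suppose $A$ is a direct summand of some $G\in\SGP$. Since $\SGP\subseteq\GP$ by Remark \ref{REM2}(1) we have $G\in\GP$, and $\GP$ is closed under direct summands by Proposition \ref{CLOD}; hence $A\in\GP$ immediately.

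For the ``only if'' direction, suppose $A\in\GP$. By the definition of $\xi$-$\mathcal{G}$projective objects, $A$ occurs as a syzygy of a complete $\xi$-projective resolution, so after reindexing I may assume $A=K_0$ and that for every $i\in\mathbb{Z}$ there is a resolution $\Mbe$-triangle $\xymatrix{K_{i+1}\ar[r]&P_i\ar[r]&K_i\ar@{-->}[r]&}$ in $\xi$ which is $\fbzh$, with every $P_i\in\P$ and every syzygy $K_i\in\GP$. I would then set $G=\bigoplus_{i\in\mathbb{Z}}K_i$ and $Q=\bigoplus_{i\in\mathbb{Z}}P_i$; both exist because countable direct sums of $\Gproj$ objects exist and $P_i\in\P\subseteq\GP$. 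Moreover $Q\in\P$, since applying $\Mcc(Q,-)\cong\prod_i\Mcc(P_i,-)$ to any $\Mbe$-triangle in $\xi$ produces a product of short exact sequences of abelian groups, which is exact.

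Next I would take the countable direct sum of all the resolution $\Mbe$-triangles. As $\xi$ is closed under countable direct sums, this is again an $\Mbe$-triangle in $\xi$,
$$\bigoplus_{i\in\mathbb{Z}}K_{i+1}\longrightarrow Q\longrightarrow\bigoplus_{i\in\mathbb{Z}}K_i\dashrightarrow,$$
and relabeling the index gives $\bigoplus_{i\in\mathbb{Z}}K_{i+1}\cong\bigoplus_{i\in\mathbb{Z}}K_i=G$, so it reads $\xymatrix{G\ar[r]&Q\ar[r]&G\ar@{-->}[r]&}$ with $Q\in\P$. To confirm this triangle is $\fbzh$, I would apply $\Mcc(-,W)$ for $W\in\P$: this gives the product over $i$ of the sequences $0\to\Mcc(K_i,W)\to\Mcc(P_i,W)\to\Mcc(K_{i+1},W)\to0$, each exact because the original resolution $\Mbe$-triangles are $\fbzh$, and a product of exact sequences is exact. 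Hence $G\in\SGP$, and since $A=K_0$ is one of the direct summands of $G=\bigoplus_{i\in\mathbb{Z}}K_i$, the object $A$ is a direct summand of an object of $\SGP$, as required.

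The step I expect to be the main obstacle is the passage to infinite direct sums: verifying that $Q$ is genuinely $\xi$-projective and that the countable direct sum of the $\Mbe$-triangles is still an $\Mbe$-triangle lying in $\xi$ and remaining $\fbzh$. This is precisely where the two standing hypotheses (existence of countable direct sums of $\Gproj$ objects and closure of $\xi$ under countable direct sums) are needed, and the verifications all reduce to the natural isomorphism $\Mcc(\bigoplus_i X_i,-)\cong\prod_i\Mcc(X_i,-)$ together with the exactness of products of short exact sequences in $\mathbf{Ab}$.
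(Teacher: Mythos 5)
Your proposal is correct and follows essentially the same route as the paper: the easy direction via $\SGP\subseteq\GP$ and closure of $\GP$ under direct summands, and the substantive direction by summing the resolution $\Mbe$-triangles of a complete $\xi$-projective resolution of $A=K_0$ to produce the $\fbzh$ $\Mbe$-triangle $\bigoplus_i K_i\to\bigoplus_i P_i\to\bigoplus_i K_i\dashrightarrow$ exhibiting $\bigoplus_i K_i\in\SGP$. Your extra verifications (that $\bigoplus_i P_i$ is $\xi$-projective and that the summed triangle remains $\fbzh$, both via $\Mcc(\bigoplus_i X_i,-)\cong\prod_i\Mcc(X_i,-)$) are details the paper leaves implicit.
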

\begin{proof}
	The ``only if'' part is obvious since $\SGP\subseteq \GP$, and $\GP$ is closed under  direct summands.
	
	Conversely, assume that $A$ is $\Gproj$, then there exists a complete $\xi$-projective resolution
	$$
	\mathbf{P}: \xymatrix{ \cdots\ar[r]&P_1\ar[r]^{d_1}&P_{0}\ar[r]^{d_0}&P_{-1}\ar[r]&{\cdots}}
	$$
	in $\Mcc$ such that $P_n$ is projective for each integer $n$ . And for any $P_n$, there exists a $\Mcc(-,\Mcp(\xi))$-exact $\Mbe$-triangle $\xymatrix{ K_{n+1}\ar[r]^{g_n}&P_n\ar[r]^{f_n}&K_n\ar@{-->}[r]^{\delta_n}&}$ in $\xi$ which is the resolution $\mathbb{E}$-triangle of $\mathbf{P}$. Without losing generality, we can assume that $A=K_0$. So we can get a $\fbzh$ $\Mbe$-triangle in $\xi$ as follows
	$$\xymatrix{
		\oplus_{i\in \mathbb{Z}}K_{i+1}\ar[r]&\oplus_{i\in \mathbb{Z}}P_i\ar[r]&\oplus_{i\in \mathbb{Z}}K_i\ar@{-->}[r]&}.
	$$
	Note that $\oplus_{i\in \mathbb{Z}}K_{i+1}\backsimeq\oplus_{i\in \mathbb{Z}}K_i\in\GP$, then $\oplus_{i\in \mathbb{Z}}K_i$ is in $\SGP$. This is enough to show the  ``if" part.
\end{proof}

\begin{lem}
	Suppose that the  countable direct sums  of $\Gproj$ objects exists and 
	 $\xi$ is closed under the countable direct sums, then $\P$, $\GP$ and $\nSGP$ are closed under the countable direct sums.
\end{lem}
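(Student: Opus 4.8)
The plan is to treat the three classes in turn, using the $\P$ case as the engine for the other two. The single elementary ingredient is that, for any countable family $\{X_i\}_{i\in\mathbb{N}}$ whose coproduct exists, the contravariant hom-functor converts it into a product, $\Mcc(\bigoplus_i X_i,Y)\cong\prod_i\Mcc(X_i,Y)$ naturally in $Y$, together with the fact that a countable product of short exact sequences in $\mathbf{Ab}$ is again short exact. Before anything else I would record that all three classes sit inside $\GP$ (by Remark \ref{REM1}(1) and Remark \ref{REM2}(1)), so that the coproducts in question exist by the standing hypothesis on countable direct sums of $\Gproj$ objects.

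For $\P$: given $\{P_i\}_{i\in\mathbb{N}}\subseteq\P$ and any $\Extri$ $A\to B\to C\dashrightarrow$ in $\xi$, each $\Mcc(P_i,-)$ sends it to a short exact sequence $0\to\Mcc(P_i,A)\to\Mcc(P_i,B)\to\Mcc(P_i,C)\to0$. Passing to the product over $i$ and invoking the hom-product isomorphism identifies this with the sequence obtained by applying $\Mcc(\bigoplus_iP_i,-)$, which is therefore exact. Hence $\bigoplus_iP_i\in\P$.

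For $\GP$ and $\nSGP$ I would argue by taking termwise direct sums of the defining complexes. For $\GP$, choose for each $A_i$ a complete $\xi$-projective resolution $\mathbf{P}^{(i)}$ with resolution $\Mbe$-triangles $K_{n+1}^{(i)}\to P_n^{(i)}\to K_n^{(i)}\dashrightarrow$ and $A_i=K_0^{(i)}$; then the termwise coproduct $\bigoplus_i\mathbf{P}^{(i)}$ has $\xi$-projective terms $\bigoplus_iP_n^{(i)}$ by the first part, its candidate resolution $\Mbe$-triangles are the coproducts $\bigoplus_i(K_{n+1}^{(i)}\to P_n^{(i)}\to K_n^{(i)})$, which lie in $\xi$ because $\xi$ is closed under countable direct sums, and these triangles are $\fbzh$ again by the hom-product isomorphism. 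This exhibits $\bigoplus_iA_i=\bigoplus_iK_0^{(i)}$ as a syzygy of a complete $\xi$-projective resolution, so $\bigoplus_iA_i\in\GP$. The $\nSGP$ case is verbatim the countable analogue of Proposition \ref{PROP2}: splicing the finite $\fbzh$ complexes of Definition \ref{strongly} and applying the same three observations yields a complete $\P$-exact complex realizing $\bigoplus_iA_i$ as $\xi$-$n$-SG-projective.

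The one point that needs genuine care is verifying that the termwise coproduct of the resolution $\Mbe$-triangles really furnishes the resolution $\Mbe$-triangles of the coproduct complex: one must check that the differential $\bigoplus_id_n^{(i)}$ factors as $(\bigoplus_ig_{n-1}^{(i)})\circ(\bigoplus_if_n^{(i)})$ through $\bigoplus_iK_n^{(i)}$ (a componentwise check) and, crucially, that each such coproduct conflation genuinely belongs to $\xi$. The latter is exactly where the hypothesis that $\xi$ is closed under countable direct sums is used, being the countable strengthening of Definition \ref{ZL}(1), while the $\fbzh$ condition reduces to the commutation of $\Mcc(-,Q)$ with countable coproducts for $Q\in\P$. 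These are the only two places the standing hypotheses enter.
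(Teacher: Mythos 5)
Your proof is correct, and since the paper dismisses this lemma with ``It is obvious,'' you have simply supplied the standard argument it leaves implicit: the coproduct--hom isomorphism $\Mcc(\bigoplus_i X_i,-)\cong\prod_i\Mcc(X_i,-)$ plus exactness of products in $\mathbf{Ab}$ for the $\P$ case and for $\fbzh$-ness, and the closure hypothesis on $\xi$ for the coproduct conflations. Nothing further is needed.
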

\begin{proof}
	It is obvious.
\end{proof}

\begin{defn}
	Let $\mathcal{W}$ be a class of objects in $\Mcc$. We call $\mathcal{W}$ 
	is $\xi$-projectively resolving if

	$(1)$ $\mathcal{W}$ contains all $\xi$-projective objects.

	$(2)$ for any $\Mbe$-triangle $A\rightarrow B\rightarrow C \dashrightarrow $ in
	$\xi$ with $C\in\mathcal{W}$ the conditions $A\in\mathcal{W}$ and $B\in\mathcal{W}$ are equivalent.
\end{defn}

\begin{thm}
	Suppose that the  countable direct sums  of $\Gproj$ objects exists and 
	 $\xi$ is closed under the countable direct sums, then the following are equivalent

$(1)$ $\nSGP$ is closed under extensions;

$(2)$  $\nSGP$ is $\xi$-projectively resolving;

$(3)$ $\GP=\nSGP$;
 
$(4)$ For any $\Mbe$-triangle $G_1\rightarrow G_0\rightarrow A \dashrightarrow $ in
 $\xi$ with $G_0,G_1\in\nSGP$, if $\ext^1(A,P)=0$ for any $P\in\P$, then $A\in\nSGP$.
\end{thm}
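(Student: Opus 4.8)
The plan is to establish the equivalences through $(1)\Leftrightarrow(2)\Leftrightarrow(3)$ together with $(1)\Leftrightarrow(4)$, since the resolving condition $(2)$ and the identity $(3)$ sit naturally beside closure under extensions $(1)$, while $(4)$ is most cheaply linked to $(1)$. Two constructions do the work. The first is a \emph{pullback}: given a triangle $A\to B\to C$ in $\xi$ and a resolution triangle $K\to P\to C$ with $P\in\P$ and $K\in\nSGP$ (which exists by Remark \ref{REM2}(2)), Lemma \ref{BH}(1) produces $M$ with triangles $A\to M\to P$ and $K\to M\to B$ in $\xi$; Corollary \ref{ymy} then removes the projective $P$, while closure under extensions controls $M$. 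The second is an \emph{$\rm(ET4)$-flip} converting a triangle with projective source into one with projective middle term, so that Corollary \ref{FM} applies. I also use repeatedly that $G\in\GP$ forces $\ext^1(G,P)=0$ for every $P\in\P$, which follows by applying $\Mcc(-,P)$ to the $\fbzh$ resolution triangle of a complete resolution of $G$ (Lemma \ref{FBZH}) and reading off the connecting homomorphism.

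I would first treat $(2)\Leftrightarrow(1)$. The direction $(2)\Rightarrow(1)$ is immediate: a triangle $A\to B\to C$ with $A,C\in\nSGP$ has its third term in $\nSGP$, so the resolving property gives $B\in\nSGP$. For $(1)\Rightarrow(2)$ the nontrivial half reads: if $A\to B\to C$ in $\xi$ has $B,C\in\nSGP$ then $A\in\nSGP$; I run the pullback above, obtain $M\in\nSGP$ from $K,B\in\nSGP$ by closure under extensions, and conclude $A\in\nSGP$ from $A\to M\to P$ by Corollary \ref{ymy}. Next, $(3)\Leftrightarrow(2)$: the implication $(3)\Rightarrow(2)$ is Lemma \ref{ABC} together with $\P\subseteq\nSGP$, and the content is $(2)\Rightarrow(3)$, where it suffices to show $\GP\subseteq\nSGP$. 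I obtain this by an Eilenberg swindle: $\nSGP$ is closed under countable direct sums by the lemma preceding this theorem, so for $W\cong A\oplus A'\in\nSGP$ one has $A\oplus W^{(\mathbb{N})}\cong W^{(\mathbb{N})}\in\nSGP$, and the split triangle $A\to A\oplus W^{(\mathbb{N})}\to W^{(\mathbb{N})}$ (third term in $\nSGP$) forces $A\in\nSGP$ by the resolving property; hence $\nSGP$ is closed under direct summands. As every $A\in\GP$ is a direct summand of an object of $\SGP\subseteq\nSGP$ by Theorem \ref{dyt}, we get $\GP\subseteq\nSGP$, the reverse inclusion being Remark \ref{REM2}(1).

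It remains to link $(4)$. For $(4)\Rightarrow(1)$, take $X\to Y\to Z$ in $\xi$ with $X,Z\in\nSGP$; Lemma \ref{ABC} gives $Y\in\GP$, so $\ext^1(Y,P)=0$ for all $P\in\P$, and the pullback against a resolution triangle $K\to P_0\to Z$ yields triangles $X\to M\to P_0$ and $K\to M\to Y$ in $\xi$ with $M\in\nSGP$ (Corollary \ref{ymy}); the triangle $K\to M\to Y$ together with the Ext-vanishing is exactly the input of $(4)$, so $Y\in\nSGP$. For $(1)\Rightarrow(4)$, given $G_1\to G_0\to A$ in $\xi$ with $G_0,G_1\in\nSGP$ and $\ext^1(A,P)=0$, I push out $G_1\to G_0$ along the first map $G_1\to Q$ of a complete complex of $G_1$ (so $Q\in\P$ and cokernel $K\in\nSGP$) via Lemma \ref{BH}(2); closure under extensions gives $M\in\nSGP$ in a triangle $Q\to M\to A$. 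Applying $\rm(ET4)$ to $Q\to M\to Q'$, where $M\to Q'\to M'$ is the first step of a complete complex of $M$ ($Q'\in\P$, $M'\in\nSGP$), yields triangles $A\to N\to M'$ and $Q\to Q'\to N$ in $\xi$. By Lemma \ref{LZHL} applied to $A\to N\to M'$ we get $\ext^1(N,P)=0$ for all $P\in\P$, using $\ext^1(A,P)=0$ and $M'\in\GP$. Then $Q\to Q'\to N$ is a length-one $\xi$-projective resolution of $N$, so $\ext^1(N,Q)\cong\mathrm{coker}\big(\Mcc(Q',Q)\to\Mcc(Q,Q)\big)=0$; thus $1_Q$ lifts along $Q\to Q'$, the triangle splits (Lemma \ref{FJ}), and $N$ is a summand of $Q'$, giving $N\in\P$ by Remark \ref{REM1}(1). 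Finally Corollary \ref{FM} applied to $A\to N\to M'$ gives $A\in\nSGP$.

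The main obstacle is the $\rm(ET4)$-flip in $(1)\Rightarrow(4)$, and precisely the verification that its two new rows lie in the proper class $\xi$. The row $A\to N\to M'$ is a cobase change of the extension realizing $M\to Q'\to M'$, hence lies in $\xi$ at once; for $Q\to Q'\to N$ I would avoid tracking the compatibilities of $\rm(ET4)$ by hand and instead verify $\Mcc(\P,-)$-exactness through a diagram chase over the three faces already known to be in $\xi$, then invoke Lemma \ref{FBZH1}. A secondary subtlety is that the swindle in $(2)\Rightarrow(3)$ genuinely requires the full resolving property (its ``kernel'' half) rather than mere closure under extensions, which is the structural reason $(3)$ pairs with $(2)$ and not directly with $(1)$.
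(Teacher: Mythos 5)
Your proof is correct, and three of its four substantive implications coincide with the paper's: $(1)\Rightarrow(2)$ via the Lemma \ref{BH}(1) pullback against a resolution $\Mbe$-triangle $K\to P\to C$ followed by Corollary \ref{ymy}; $(2)\Rightarrow(3)$ via the Eilenberg swindle on a complement supplied by Theorem \ref{dyt} (the paper swindles directly on $L=H\oplus G\oplus H\oplus G\oplus\cdots$, you phrase it as closure of $\nSGP$ under summands, but it is the same argument and both need the preceding lemma on countable direct sums); and $(4)\Rightarrow(1)$ via the same pullback, the only cosmetic difference being that you get $\ext^1(Y,P)=0$ from $Y\in\GP$ (Lemma \ref{ABC}) while the paper reads it off the long exact sequence of \cite[Lemma 3.5]{JZP}. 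Where you genuinely diverge is in how the loop is closed: the paper runs the cycle $(1)\Rightarrow(2)\Rightarrow(3)\Rightarrow(4)\Rightarrow(1)$ and disposes of $(3)\Rightarrow(4)$ in one line by citing \cite[Lemma 3.6]{JZP}, whereas you prove $(1)\Rightarrow(4)$ directly and self-containedly — pushing out along the first map $G_1\to Q$ of a complete complex to reach a triangle $Q\to M\to A$ with $M\in\nSGP$, rotating by $\rm(ET4)$ against $M\to Q'\to M'$ to get $A\to N\to M'$ and $Q\to Q'\to N$ in $\xi$, forcing $N\in\P$ from $\ext^1(N,-)|_{\P}=0$ via the splitting criterion of Lemma \ref{FJ}, and finishing with Corollary \ref{FM}. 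Your verification that $Q\to Q'\to N$ lies in $\xi$ by checking $\Mcc(\P,-)$-exactness and invoking Lemma \ref{FBZH1}, rather than chasing the $\rm(ET4)$ compatibilities, is sound, and the identification $\ext^1(N,Q)\cong\mathrm{coker}\bigl(\Mcc(Q',Q)\to\Mcc(Q,Q)\bigr)$ from the length-one resolution is legitimate. The trade-off is clear: the paper's route is shorter but leans on an external lemma from \cite{JZP}; yours costs an extra page but in effect reproves the needed special case of that lemma at the level of $\nSGP$ using only closure under extensions, and it proves six implications where four would do.
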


\begin{proof}
	$(1)$ $\Rightarrow$ $(2)$ We only need to prove that for any $\Mbe$-triangle $A\rightarrow B\rightarrow C\dashrightarrow $ in
	$\xi$ with $B$ and $C$ in $\nSGP$, then $A\in\nSGP$. Similar to the proof of Corollary \ref{FM}, we have the following commutative diagram 
	\[
		\xymatrix{
			&{K_1}\ar@{=}[r]\ar[d]&{K_1}\ar[d]\\
			A\ar[r]\ar@{=}[d]&D\ar[r]\ar[d]&{P_0}\ar[d]\ar@{-->}[r]&\\
			A\ar[r]&B\ar@{-->}[d]\ar[r]&C\ar@{-->}[r]\ar@{-->}[d]&\\
			&&}
	\]
with some $\Mbe$-triangles in $\xi$. So $D$ is $\xi$-$n$-$\mathcal{G}$projective, since $\nSGP$ is closed under extensions.
Then we have $A\in\nSGP$ by Corollary \ref{ymy}.

$(2)$ $\Rightarrow$ $(3)$ Let $G$ is $\mathcal{G}$projective, then there is a $\mathcal{G}$projective object $H$ such that 
$H\oplus G$ is in $\SGP$ by Theorem \ref{dyt}. Set
\[
L=	H\oplus G\oplus H\oplus G\oplus\cdots\text{,}
\]
then $L\in\SGP$ and so it is in $\nSGP$. We consider the split $\Mbe$-triangle
\[
\xymatrix{G\ar[r]&{G\oplus L}\ar[r]&L\ar@{-->}[r]&}	.
\]
Since $G\oplus L\backsimeq L$, it follows that $G\in\nSGP$.

$(3)$ $\Rightarrow$ $(4)$ It follows from \cite[Lemma 3.6]{JZP}.

$(4)$ $\Rightarrow$ $(1)$ Let  $\xymatrix{A\ar[r]&{B}\ar[r]&C\ar@{-->}[r]&}$ be 
an $\Mbe$-triangle in $\xi$, where $A$ and $C$ belong to $\nSGP$. Then we have the exact sequence
\[
	0=\ext^1(C,P)\rightarrow \ext^1(B,P)\rightarrow \ext^1(A,P)=0
\]
in $\mathbf{Ab}$ for any $P\in\P$ by \cite[Lemma 3.5]{JZP}, so $\ext^1(B,P)=0$.
Since $C$ is $\xi$-$n$-$\mathcal{G}$projective, then there exists an $\Mbe$-triangle 
$\xymatrix{K_1\ar[r]&{P_0}\ar[r]&C\ar@{-->}[r]&}$ in $\xi$ with $K_1\in\nSGP,P_0\in\P$.
Consider the following commutative diagram by Lemma 
\ref{BH}.
\[
	\xymatrix{
		&{K_1}\ar@{=}[r]\ar[d]&{K_1}\ar[d]\\
		A\ar[r]\ar@{=}[d]&D\ar[r]\ar[d]&{P_0}\ar[d]\ar@{-->}[r]&\\
		A\ar[r]&B\ar@{-->}[d]\ar[r]&C\ar@{-->}[r]\ar@{-->}[d]&\\
		&&}
\]
with  $\Mbe$-triangles in $\xi$. Note that $A\in\nSGP$, $D$ is in $\nSGP$ by Corollary \ref{ymy}. And then by hypothesis, we get $B\in\nSGP$.

\end{proof}

\section{$\xi$-$n$-strongly $\mathcal{G}$projective dimension}

\quad~In this section, we introduce the notion of $\xi$-$n$-strongly $\mathcal{G}$projective dimension for any object in $\Mcc$.
\begin{defn}
For any integer $n\geqslant 1$,	the 
\emph{$\xi$-$n$-strongly $\mathcal{G}$projective dimension} $\nGpd A$ of an 
object $A$ is defined inductively. When $A=0$, put $\Gpd A=-1$.  
If $A\in\nSGP$, then define $\Gpd A=0$. Next by induction, for an
 integer $m>0$, put $\nGpd A\leqslant m$ if there exists an 
 $\mathbb{E}$-triangle $K\rightarrow S\rightarrow A\dashrightarrow$ 
 in $\xi$ with $S\in \nSGP$ and $\nGpd K\leqslant n-1$.

 We say $\nGpd A=m$ if $\nGpd A\leqslant m$ and $\nGpd A \nleqslant  m-1$. 
If $\nGpd A\neq m$, for all $m\geqslant0$, we say  $\nGpd A=\infty$.

\end{defn}
\begin{prop}
	There is an inequality $\Gpd A\leqslant \nGpd A\leqslant  \xi\text{-pd}A$, and 
	the equality holds if $  A\in\widehat{\Mcp}(\xi)$ . 
\end{prop}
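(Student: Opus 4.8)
The plan is to deduce everything from the chain of category inclusions recorded in Remark \ref{REM2}(1), namely $\P\subseteq\nSGP\subseteq\GP$, together with the fact that $\Gpd$, $\nGpd$ and $\xi\text{-pd}$ are all built by the very same inductive scheme: resolve by an object of the relevant class through an $\Mbe$-triangle in $\xi$, and drop the dimension by one. The observation I would isolate first is a monotonicity principle: if $\mathcal{X}\subseteq\mathcal{Y}$ and both dimensions are defined this way, then the dimension relative to the larger class $\mathcal{Y}$ never exceeds the one relative to $\mathcal{X}$, simply because every $\mathcal{X}$-object is a $\mathcal{Y}$-object, so any $\Mbe$-triangle witnessing the $\mathcal{X}$-dimension also witnesses the $\mathcal{Y}$-dimension.

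Concretely, I would prove $\nGpd A\leqslant\xi\text{-pd}A$ by induction on $m=\xi\text{-pd}A$. The cases $A=0$ and $A\in\P$ are immediate (in the latter $\P\subseteq\nSGP$ gives $\nGpd A=0$); for $m>0$ one takes the defining $\Mbe$-triangle $K\rightarrow P\rightarrow A\dashrightarrow$ in $\xi$ with $P\in\P$ and $\xi\text{-pd}K\leqslant m-1$, notes $P\in\nSGP$, applies the induction hypothesis $\nGpd K\leqslant m-1$, and reads off $\nGpd A\leqslant m$. The inequality $\Gpd A\leqslant\nGpd A$ is proved identically, now inducting on $\nGpd A$ and using $\nSGP\subseteq\GP$. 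Neither induction carries any real content; they are purely formal consequences of the inclusions.

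For the equality under the hypothesis $A\in\widehat{\Mcp}(\xi)$, I would invoke \citep[Proposition 5.4]{JDP}, which states that $\Gpd A=\xi\text{-pd}A$ whenever $\xi\text{-pd}A<\infty$ (the identity already used in the proof of Theorem \ref{STR2}). Combined with the chain $\Gpd A\leqslant\nGpd A\leqslant\xi\text{-pd}A$ just established, the two outer terms coincide, so $\nGpd A$ is squeezed between two equal finite values and all three quantities agree.

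The only genuine content lies in the equality step, and that content is entirely imported from \citep[Proposition 5.4]{JDP}; thus I expect no serious obstacle beyond being careful with the squeeze argument. The point to verify is that, once the hypothesis $\xi\text{-pd}A<\infty$ makes all three dimensions finite (via the first chain of inequalities) and forces the outer inequalities to collapse to an equality, the middle term is indeed forced to agree as well. Hence the proposition reduces to the two routine inductions plus one citation.
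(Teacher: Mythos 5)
Your proposal is correct and follows essentially the same route as the paper: the paper declares the chain $\Gpd A\leqslant \nGpd A\leqslant \xi\text{-pd}A$ obvious (your two inductions via $\P\subseteq\nSGP\subseteq\GP$ are exactly the justification it leaves implicit) and then squeezes the middle term using the cited result $\Gpd A=\xi\text{-pd}A$ for $\xi\text{-pd}A<\infty$. The only discrepancy is cosmetic: the paper cites this as Lemma 5.4 of the reference rather than Proposition 5.4.
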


\begin{proof}
	It is obvious that $\Gpd A\leqslant \nGpd A\leqslant \xi\text{-pd}A$. Assume that $\xi$-pd$A<\infty$, then we have 
	$\Gpd A= \xi$-pd$A$ by \cite[Lemma 5.4]{JDP}, then we can get $\Gpd A=\nGpd A= \xi\text{-pd}A$.

	\begin{cor}
		Let $A$ be an object with $\nGpd A\leqslant m$, then
		
		$(1)$ there exists a $\xi$-exact sequence 
		\[
		0\rightarrow S_m \rightarrow S_{m-1}\rightarrow \cdots	\rightarrow S_1\rightarrow S_0\rightarrow A\rightarrow 0
		\]
		with $S_i\in\nSGP$, $0\leqslant i\leqslant m$ .

		$(2)$ $\ext^{m+i}(A,Q) = 0$ for any $Q \in\ \widehat{\Mcp}(\xi)$ and $i\geqslant 1$.
	\end{cor}

\end{proof}

{\small

}
\end{document}